\newcommand{\com}[1]{\vskip.3cm
\hspace{0.6cm}\parbox{0.91\linewidth}{ #1}
\vskip.3cm}
\newcommand{\be}{\beta}
\newcommand{\1}{\mathbf{1}}
\newcommand{\rti}{\tilde{r}}
\newcommand{\tti}{\tilde{t}}
\newcommand{\sti}{\tilde{s}}
\newcommand{\uti}{\tilde{u}}
\newcommand{\vti}{\tilde{v}}
\newcommand{\xiti}{\tilde{\xi}}
\newcommand{\etati}{\tilde{\eta}}
\newcommand{\lati}{\tilde{\la}}
\newcommand{\yti}{\tilde{y}}
\newcommand{\zti}{\tilde{z}}
\newcommand{\R}{\mathbb R}
\newcommand{\cac}{\mathcal C}
\newcommand{\cd}{\mathcal D}
\newcommand{\ce}{\mathcal E}
\newcommand{\cf}{\mathcal F}
\newcommand{\cg}{\mathcal G}
\newcommand{\ch}{\mathcal H}
\newcommand{\ci}{\mathcal I}
\newcommand{\cj}{\mathcal J}
\newcommand{\cl}{\mathcal L}
\newcommand{\al}{\alpha}
\newcommand{\si}{\sigma}
\newcommand{\ga}{\gamma}
\newcommand{\ka}{\kappa}
\newcommand{\la}{\lambda}
\newcommand{\vp}{\varphi}
\newtheorem{theorem}{Theorem}[section]
\newtheorem{definition}[theorem]{Definition}
\newtheorem{lemma}[theorem]{Lemma}
\newtheorem{notation}[theorem]{Notation}
\newtheorem{proposition}[theorem]{Proposition}
\theoremstyle{remark}
\newtheorem{remark}[theorem]{Remark}
    \pgfmathsetlength{\pgf@xb}{\pgfkeysvalueof{/pgf/outer xsep}}%
    \pgfmathsetlength{\pgf@yb}{\pgfkeysvalueof{/pgf/outer ysep}}%
\colorlet{symbols}{blue!90!black}
\colorlet{testcolor}{green!60!black}
\def\drawx{\draw[-,solid] (-3pt,-3pt) -- (3pt,3pt);\draw[-,solid] (-3pt,3pt) -- (3pt,-3pt);}
\tikzset{
	root/.style={circle,fill=testcolor,inner sep=0pt, minimum size=2mm},
	dot/.style={circle,fill=black,inner sep=0pt, minimum size=1mm},
	var/.style={circle,fill=black!10,draw=black,inner sep=0pt, minimum size=2mm},
	dotred/.style={circle,fill=black!50,inner sep=0pt, minimum size=2mm},
	generic/.style={semithick,shorten >=1pt,shorten <=1pt},
	dist/.style={ultra thick,draw=testcolor,shorten >=1pt,shorten <=1pt},
	testfcn/.style={ultra thick,testcolor,shorten >=1pt,shorten <=1pt,<-},
	testfcnx/.style={ultra thick,testcolor,shorten >=1pt,shorten <=1pt,<-,
		postaction={decorate,decoration={markings,mark=at position 0.6 with {\drawx}}}},
	kprime/.style={semithick,shorten >=1pt,shorten <=1pt,densely dashed,->},
	kprimex/.style={semithick,shorten >=1pt,shorten <=1pt,densely dashed,->,
		postaction={decorate,decoration={markings,mark=at position 0.4 with {\drawx}}}},
	kernel/.style={semithick,shorten >=1pt,shorten <=1pt,->},
	multx/.style={shorten >=1pt,shorten <=1pt,
		postaction={decorate,decoration={markings,mark=at position 0.5 with {\drawx}}}},
	kernelx/.style={semithick,shorten >=1pt,shorten <=1pt,->,
		postaction={decorate,decoration={markings,mark=at position 0.4 with {\drawx}}}},
	kernel1/.style={->,semithick,shorten >=1pt,shorten <=1pt,postaction={decorate,decoration={markings,mark=at position 0.45 with {\draw[-] (0,-0.1) -- (0,0.1);}}}},
	kernel2/.style={->,semithick,shorten >=1pt,shorten <=1pt,postaction={decorate,decoration={markings,mark=at position 0.45 with {\draw[-] (0.05,-0.1) -- (0.05,0.1);\draw[-] (-0.05,-0.1) -- (-0.05,0.1);}}}},
	kernelBig/.style={semithick,shorten >=1pt,shorten <=1pt,decorate, decoration={zigzag,amplitude=1.5pt,segment length = 3pt,pre length=2pt,post length=2pt}},
	rho/.style={dotted,semithick,shorten >=1pt,shorten <=1pt},
	renorm/.style={shape=circle,fill=white,inner sep=1pt},
	labl/.style={shape=rectangle,fill=white,inner sep=1pt},
	xi/.style={circle,fill=symbols!10,draw=symbols,inner sep=0pt,minimum size=1.2mm},
	xix/.style={crosscircle,fill=symbols!10,draw=symbols,inner sep=0pt,minimum size=1.2mm},
	xib/.style={circle,fill=symbols!10,draw=symbols,inner sep=0pt,minimum size=1.6mm},
	xibx/.style={crosscircle,fill=symbols!10,draw=symbols,inner sep=0pt,minimum size=1.6mm},
	not/.style={circle,fill=symbols,draw=symbols,inner sep=0pt,minimum size=0.5mm},
	>=stealth,
	}
\def\DeclareSymbol#1#2#3{\expandafter\gdef\csname MH@symb@#1\endcsname{\tikz[baseline=#2,scale=0.15,draw=symbols]{#3}}\expandafter\gdef\csname MH@symb@#1s\endcsname{\scalebox{0.7}{\tikz[baseline=#2,scale=0.15,draw=symbols]{#3}}}}
\def\<#1>{\csname MH@symb@#1\endcsname}
\date{\today}
\title{On ill-posedness of nonlinear stochastic wave equations driven by rough noise}
\begin{document}

\maketitle

\begin{center}
{\large
Aur\'elien Deya\footnote{Institut \'Elie Cartan, Universit\' e de Lorraine, BP 70239, 54506 Vandoeuvre-l\`es-Nancy, France. \\Email: {\tt aurelien.deya@univ-lorraine.fr}}}
\end{center}

\bigskip

\bigskip

\noindent 
{\bf Abstract.} We highlight a fundamental ill-posedness issue for nonlinear stochastic wave equations driven by a fractional noise. Namely, if the noise becomes too rough (i.e., the sum of its Hurst indexes becomes too small), then there is essentially no hope to provide an interpretation of the model, whether directly or through a Wick-type renormalization procedure. This phenomenon can be compared with the situation of a general SDE driven by a two-dimensional fractional noise of index $H\leq \frac14$. 

\smallskip

Our results clarify and extend previous similar properties exhibited in \cite{De2} or in \cite{oh-okamoto}.

\bigskip

\noindent {\bf Keywords}: nonlinear stochastic wave equation; fractional noise; ill-posedness issue.

\bigskip

\noindent
{\bf 2000 Mathematics Subject Classification:} 60H15, 60G22, 35L05, 35R25.

\section{Introduction}\label{sec:intro}


The present study is devoted to the investigations of ill-posedness issues related to the stochastic nonlinear wave model:
\begin{equation}\label{equa-intro}
\begin{cases}
\partial^2_t u=\Delta u+f(u)+\si(u) \dot{B}, \quad t\in [0,T], \ x\in \R^d\, ,\ d\geq 1\\
u_0=(\partial_t u)_0=0\, ,
\end{cases}
\end{equation}
where $f,\si:\R\to \R$ are regular maps satisfying $f''(0)\neq 0$, $\si(0)\neq 0$, and $\dot{B}$ stands for a space-time fractional noise (which includes the case of space-time white noise). Roughly speaking, we intend to exhibit some breakup phenomenon with respect to the regularity of $\dot{B}$, by showing that below a specific threshold, no interpretation of the model - whether classical, It{\^o} or Wick-type - can be expected, owing to a fundamental singularity issue. 

\

Such a sudden failure in the analysis - with a noise becoming \enquote{too rough} - is a well-identified phenomenon for standard stochastic differential systems, i.e. when dealing with the one-parameter model
\begin{equation}\label{back-to-sde}
\begin{cases}
dY_t=\si(Y_t)\, \dot{B}_t\, ,  \quad  t\in [0,T],\\
Y_0=0\, ,
\end{cases}
\end{equation}
where $\si:\R \to \cl(\R^m,\R)$ is a regular map and $\dot{B}=dB$ stands for a $m$-dimensional fractional noise of Hurst index $H\in (0,1)$ (see Definition \ref{defi:frac-noise} for a general presentation of fractional noises). Indeed, it has long been established that for $m\geq 2$, a robust interpretation and analysis of \eqref{back-to-sde} is possible as long as $H> \frac14$, while no fully satisfying general interpretation is available when $H\leq \frac14$. The theory of rough paths (\cite{friz-hairer,friz-victoir,lyons-qian}) offers a very clear insight on such a behaviour. In brief, the theory somehow allows us to reduce the analysis of \eqref{back-to-sde} to the study of a finite-order development of the equation, that is one can morally focalize on the approximation 
\begin{align}
Y_t &= \sum_{i=1}^m \int_0^t \si_i(Y_s) \,  \dot{B}^{(i)}_s\approx \sum_{i=1,\ldots,m} \si_i(0)\int_0^t \dot{B}^{(i)}_s+\sum_{i,j=1,\ldots,m} \si'_i(0)\si_j(0)\int_0^t \int_0^s \dot{B}^{(j)}_r \dot{B}^{(i)}_s+...\label{expansion-sde}
\end{align}
where the (finite) order of the expansion depends on the regularity of $B$. In particular, rough paths theory emphasizes the central role of the iterative integrals $\big(\int_0^t \dot{B}^{(i)},\int_0^t\int_0^s \dot{B}^{(j)}_r\dot{B}^{(i)}_s$,...) in the dynamics: the possibility to construct these objects becomes a necessary and sufficient condition in order to interpret and solve the equation for all regular vector fields $\si$. Keeping these principles in mind, the breakup phenomenon for the standard equation \eqref{back-to-sde} can be precisely expressed as follows.

\begin{proposition}\label{prop:intro-fbm}
Let $\dot{B}=(\dot{B}^{(1)},\dot{B}^{(2)})$ be a two-dimensional fractional noise on $\R$, with index $H\in (0,1)$. Consider a smooth and compactly-supported function $\rho:\R\to \R_+$ such that $\int_{\R} \rho=1$, and for every $n\geq 0$, set $\rho_n(x):=2^n \rho(2^n x)$. Finally, denote by $\dot{B}^{(i),n}$ the mollification of $\dot{B}^{(i)}$ through $\rho_n$, i.e. 
\begin{equation}\label{mollifi-noi}
\dot{B}^{(i),n}:=\rho_n\ast \dot{B}^{(i)}.
\end{equation} 
In this setting, the following picture holds true:

\smallskip

\noindent
$(i)$ If $H\in (\frac14,1)$, then for all $1\leq i,j\leq 2$, the sequence of processes $\big(t\mapsto\int_0^t ds \int_0^s dr\, \dot{B}^{(i),n}_r \, \dot{B}^{(j),n}_s\big)_{n\geq 0}$ converges almost surely in the space $\cac([0,T];\R)$ of continuous functions.

\smallskip

\noindent
$(ii)$ If $0<H\leq \frac14$, then for any non-zero smooth function $\vp:\R_+\to \R_+$ with support in $[0,1]$, one has
\begin{equation}\label{explosion-levy}
\mathbb{E}\bigg[\Big|\big\langle \int_0^. ds\int_0^sdr \, \dot{B}^{(1),n}_r \dot{B}^{(2),n}_s,\vp\big\rangle \Big|^2\bigg] \stackrel{n\to \infty}{\longrightarrow} \infty\, .
\end{equation}
\end{proposition}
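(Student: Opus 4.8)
The plan is to reduce everything to the off-diagonal term and then pass to the Fourier side. Set $B^{(i),n}_t:=\int_0^t\dot B^{(i),n}_s\,ds$, so that $B^{(i),n}\to B^{(i)}$ uniformly on $[0,T]$ almost surely by standard mollification estimates, and
\[
\mathbb{A}^{ij,n}_t:=\int_0^t ds\int_0^s dr\,\dot B^{(i),n}_r\dot B^{(j),n}_s=\int_0^t B^{(i),n}_s\,\dot B^{(j),n}_s\,ds .
\]
For $i=j$ this is $\tfrac12\big(B^{(i),n}_t\big)^2$, which converges to $\tfrac12\big(B^{(i)}_t\big)^2$ in $\cac([0,T];\R)$ almost surely, so only $\mathbb{A}^{12,n}$ (and, symmetrically, $\mathbb{A}^{21,n}$) needs work. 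Here I would use that $\dot B^{(1)},\dot B^{(2)}$ are independent and that each $\dot B^{(i)}$ is a centered stationary Gaussian process whose spectral density is $c_H|\xi|^{1-2H}$, convolution by $\rho_n$ multiplying this density by $|\widehat\rho(\xi 2^{-n})|^2$. Expanding any second moment of $\mathbb{A}^{12,n}$ and using independence factorises it into a covariance of $B^{(1),n}$ times a covariance of $\dot B^{(2),n}$, and rewriting each covariance spectrally turns the relevant quantities into explicit integrals in two frequency variables.

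For part $(ii)$, Fubini first rewrites $\langle\mathbb{A}^{12,n}_\cdot,\varphi\rangle=\int_0^T\Phi(s)\,B^{(1),n}_s\,\dot B^{(2),n}_s\,ds$ with $\Phi(s):=\int_s^T\varphi(t)\,dt$ (note $\Phi\ge0$, $\Phi$ Lipschitz, $\Phi(T)=0$ and $\Phi\not\equiv0$). Carrying out the procedure above and integrating in the time variables before the frequency ones, I expect to reach the exact identity
\[
\mathbb{E}\Big[\big|\langle\mathbb{A}^{12,n}_\cdot,\varphi\rangle\big|^2\Big]=c\int_{\R^2}\big|K(\xi,\eta)\big|^2\,\big|\widehat\rho(\xi 2^{-n})\big|^2\big|\widehat\rho(\eta 2^{-n})\big|^2\,|\xi|^{1-2H}\,|\eta|^{1-2H}\,d\xi\,d\eta ,
\]
for a constant $c=c(H)>0$, with kernel $K(\xi,\eta):=\tfrac1{i\xi}\big(\widehat\Phi_T(\xi+\eta)-\widehat\Phi_T(\eta)\big)$ and $\widehat\Phi_T(\zeta):=\int_0^T\Phi(s)e^{is\zeta}\,ds$ — the shape of $K$ coming from the factor $\tfrac{e^{is\xi}-1}{i\xi}$ carried by the $B^{(1),n}$-covariance and the factor $e^{is\eta}$ carried by the $\dot B^{(2),n}$-covariance. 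Since $\widehat\rho(0)=\int\rho=1$, Fatou's lemma bounds $\liminf_n\mathbb{E}[\,\cdot\,]$ below by the same integral with the two factors $|\widehat\rho(\cdot\,2^{-n})|^2$ deleted, and the crux is that \emph{this} integral is infinite precisely when $H\le\tfrac14$. To see the divergence I would restrict to the resonant region $R_{M,\delta}:=\{(\xi,\eta):|\xi|\ge M,\ |\xi+\eta|\le\delta\}$, on which $|\eta|\asymp|\xi|$: there $\widehat\Phi_T(\xi+\eta)$ stays close to $\widehat\Phi_T(0)=\int_0^T s\,\varphi(s)\,ds>0$ while $\widehat\Phi_T(\eta)\to0$ as $|\eta|\to\infty$, so for $\delta$ small and then $M$ large $|K(\xi,\eta)|^2=\big|\widehat\Phi_T(\xi+\eta)-\widehat\Phi_T(\eta)\big|^2/\xi^2\gtrsim|\xi|^{-2}$; the integrand is then $\gtrsim|\xi|^{-2+2(1-2H)}=|\xi|^{-4H}$, and integrating over $R_{M,\delta}$ leaves $\gtrsim\int_{|\xi|\ge M}|\xi|^{-4H}\,d\xi=\infty$ exactly when $4H\le1$. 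This proves \eqref{explosion-levy}.

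For part $(i)$, $H\in(\tfrac14,1)$, the same identity shows on the one hand that the limiting kernel $K$ is square-integrable against $|\xi|^{1-2H}|\eta|^{1-2H}$ — the resonant region now contributes $\int|\xi|^{-4H}<\infty$, and the non-resonant regions are controlled by the decay of $\widehat\Phi_T$ and $\widehat\Phi_T'$ at infinity — and on the other hand, because $\widehat\rho$ is Schwartz with $\widehat\rho(0)=1$, an analogous computation (with $\Phi$ replaced by the increment factor $\mathbf{1}_{(t',t]}$ in the kernel) should give a geometric bound $\sup_{t\le T}\mathbb{E}\big[|\mathbb{A}^{12,n+1}_t-\mathbb{A}^{12,n}_t|^2\big]\lesssim 2^{-\kappa n}$ for some $\kappa>0$ together with a $t$-uniform bound $\mathbb{E}\big[|\mathbb{A}^{12,n}_t-\mathbb{A}^{12,n}_{t'}|^2\big]\lesssim|t-t'|^{2\beta}$ for some $\beta>0$. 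Since $\mathbb{A}^{12,n}$ lives in a fixed Wiener chaos, hypercontractivity turns these into $L^p$-bounds for every $p$; a Garsia--Rodemich--Rumsey (or Kolmogorov) argument in $t$ then yields $n$-uniform Hölder estimates, and Borel--Cantelli in $n$ gives the almost sure convergence in $\cac([0,T];\R)$; the diagonal case was handled above. Alternatively, $(i)$ is the classical statement that mollifications of the canonical rough-path lift of a fractional Brownian motion of index $H>\tfrac14$ converge, which one may simply invoke from \cite{friz-victoir,lyons-qian}.

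The hard part is the \emph{sharpness} of the threshold in $(ii)$: one must locate a piece of frequency space on which the weight $|\xi|^{1-2H}|\eta|^{1-2H}$ stops being integrable against $|K|^2$ exactly at $H=\tfrac14$, while checking that $K$ does not degenerate there. That piece is the resonance $\eta\approx-\xi$ between the integrated noise $B^{(1),n}$ and the noise $\dot B^{(2),n}$, and it is precisely there that the hypotheses on $\varphi$ (non-zero and non-negative) come into play, through the strict positivity of $\widehat\Phi_T(0)=\int_0^T s\,\varphi(s)\,ds$. Pinning down the exact constant and the reality conventions in the spectral identity, treating the non-resonant regions in $(i)$, and upgrading $L^2$-convergence to almost-sure $\cac([0,T];\R)$-convergence are, by comparison, routine.
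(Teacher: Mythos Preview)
Your argument for part $(ii)$ is correct and shares with the paper the key insight that the divergence comes from the resonant region where the two frequencies nearly cancel. The executions differ, however. The paper keeps the mollifier factors $|\cf\rho(2^{-n}\cdot)|^2$ throughout, splits the time integral as $M_\vp+R_\vp$ (main term plus remainder), restricts from the outset to the strip $\xi>1$, $\xiti\in(|\xi|,2|\xi|)$, and shows by direct change of variables that the main contribution satisfies $\cj_M^n\gtrsim 2^{n(1-4H)}\int_{2^{-n}}^{\delta/2}|\xi|^{-4H}\,d\xi$, while the cross and remainder terms stay uniformly bounded. Your route is shorter: the Fubini substitution $\Phi(s)=\int_s^T\vp$ produces the closed-form kernel $K(\xi,\eta)=\frac{1}{\imath\xi}\big(\widehat\Phi_T(\xi+\eta)-\widehat\Phi_T(\eta)\big)$, and then Fatou's lemma (legitimate since the integrand is nonnegative) lets you discard the mollifier factors before proving divergence on $\{|\xi|\ge M,\ |\xi+\eta|\le\delta\}$. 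What you gain is concision and the avoidance of the $M_\vp/R_\vp$ bookkeeping; what the paper's approach gains is an explicit rate of blow-up in $n$, which is not required by the statement but is informative. For part $(i)$ both you and the paper ultimately defer to the rough-paths literature, so there is nothing to compare.
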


The above general behaviour of the noise family $(\dot{B}^H)_{H\in (0,1)}$ has been first highlighted by Coutin and Qian in their work \cite[Theorem 2]{coutin-qian}. This being said, we are not aware of any previous \enquote{explosion} statement as general as the one in item $(ii)$, and for this reason we have included the proof of \eqref{explosion-levy} in Appendix \ref{append:levy-area} (the latter can also be seen as an introduction to the more sophisticated computations in the wave setting). 

\smallskip

The statement of Proposition \ref{prop:intro-fbm} thus reads as follows. As long as $H>\frac14$, one is able to provide a  natural interpretation of the so-called L{\'e}vy-area term $\int_0^. \int_0^s\dot{B}_r \otimes \dot{B}_s$, which in turn can be injected into the rough paths machinery, opening the way for a full treatment of equation \eqref{back-to-sde}. On the other hand, as soon as $H\leq \frac14$, the situation regarding this L{\'e}vy-area term (and subsequently the equation itself) becomes totally out of control: according to item $(ii)$, not only does $t\mapsto\int_0^t \int_0^s\dot{B}^{(1),n}_r  \dot{B}^{(2),n}_s$ fail to converge in the space of continuous functions, but it also explodes as a general distribution of time, ruling out any potential interpretation of \eqref{back-to-sde} (at least for non-trivial $\sigma$).

\

With Proposition \ref{prop:intro-fbm} in mind, our objective in the sequel is now easy to state. Namely, we would like to exhibit a similar drastic change of regime for the more sophisticated SPDE dynamics \eqref{equa-intro}, as the regularity of the noise $\dot{B}$ decreases. In order to - heuristically - find out the exact counterpart of Proposition \ref{prop:intro-fbm} in the wave setting, let us follow a similar approach as with the standard model \eqref{back-to-sde} and expand equation \eqref{equa-intro}, starting from its mild formulation
\begin{equation}\label{mild-intro}
u(t,x)=\int_0^t \int_{\R^d}\cg_{t-s}(x-y)f(u(s,y)) \, dy ds+\int_0^t \int_{\R^d} \cg_{t-s}(x-y)\si(u(s,y)) \dot{B}(ds,dy) \, .
\end{equation}
Throughout the paper, the notation $\cg$ will refer to the fundamental solution of the wave equation on $\R^d$, characterized by its spatial Fourier transform
\begin{equation}\label{fourier-wave-kernel}
\cf_x(\cg_t)(\eta):=\int_{\R^d} dx \, e^{-\imath \langle \xi,x\rangle}\cg_t(x)=\frac{\sin (t|\eta|)}{|\eta|}\, , \quad t\geq 0, \ \eta\in \R^d.
\end{equation}
Setting additionally $\cg_t:=0$ and $\dot{B}_t:=0$ for $t\leq 0$, equation \eqref{mild-intro} can be more concisely rephrased as
$$u=\cg\ast \big(\si(u)\dot{B}\big)+\cg \ast \big(\1_{\R_+} f(u)\big),$$
where the notation $\ast$ stands for the convolution in $\R^{d+1}$. Now let us stick here to a local expansion of $u$ around the initial condition, i.e. zero, by taking the nonlinear assumptions $f''(0)\neq 0,\si(0)\neq 0$ into account. In other words, let us approximate $f(u)$ and $\si(u)$ by 
$$f(u)\approx f(0)+f'(0) u+\frac{f''(0)}{2} u^2 , \quad \si(u)\approx \si(0)  , $$   
which yields first
\begin{align*}
u&\approx \si(0)\, \big(\cg\ast \dot{B}\big)+\bigg[f(0)\frac{t^2}{2}+f'(0) \ \big(\cg \ast \big[\1_{\R_+} u\big]\big)+\frac{f''(0)}{2} \big(\cg \ast \big[\1_{\R_+}  u^2\big]\big)\bigg]\, ,
\end{align*}
and by iterating the procedure, we formally get
\begin{align}
u&\approx \si(0)\, \big(\cg\ast \dot{B}\big)+f'(0)\si(0) \ \big(\cg \ast \big[\1_{\R_+}  \big(\cg\ast \dot{B}\big) \big]\big) +\frac{f''(0)}{2}\si(0)^2 \big(\cg \ast \big[\1_{\R_+}   \big(\cg\ast \dot{B}\big)^2\big]\big)+f(0)\frac{t^2}{2}\label{expansion-wave}\, .
\end{align}
To some extent, this local approximation of $u$ can be compared with the expansion in \eqref{expansion-sde}, and accordingly we are led to the following parallel between the fundamental elements in each formula, classified along their orders:
\begin{equation}\label{five-central-objects}
\int_0^. \dot{B}_s \longleftrightarrow \big(\cg\ast \dot{B},\cg \ast \big[\1_{\R_+}  \big(\cg\ast \dot{B}\big) \big]\big)=:(\<Psi>,\<IPsi>)\quad , \quad \int_0^. \int_0^s \dot{B}_r \otimes \dot{B}_s\longleftrightarrow \cg \ast \big[\1_{\R_+} \big(\cg\ast \dot{B}\big)^2\big]=:\<IPsi2>\, .
\end{equation}
The above symbols $\<Psi>,\<IPsi>,\<IPsi2>$ will be used for a better identification of these central processes, and in the same vein, we denote $\<Psi2>:=\big(\cg\ast \dot{B}\big)^2$ (this graphical convention is derived from the one used in the theory of regularity structures \cite{hai-14}). A particular common feature of the five random objects in \eqref{five-central-objects} is that their definitions do not depend on the (potential) solution $u$, and therefore these objects can be studied independently. 

\smallskip

Now, in the same spirit as in the above-described rough paths procedure (see again Proposition \ref{prop:intro-fbm}), the fundamental question is to determine whether the above elements $(\<Psi>,\<IPsi>,\<IPsi2>)$ do exist, in a space to be specified. Recall indeed that $\dot{B}$ is only defined as a general random distribution (see Definition \ref{defi:frac-noise}), and therefore its involvement within any \enquote{non-smooth} construction must be carefully justified. 

\

The existence issue for the first-order elements $(\<Psi>,\<IPsi>)$ has already been treated in \cite[Proposition 1.2]{De1}, and the result can be summed up as follows (see Section \ref{subsec:set-not} for the definition of the weighted Sobolev spaces $\ch_w^\al$).

\begin{proposition}[\cite{De1}]\label{prop:psi}
Fix $d\geq 1$ and let $\dot{B}$ be a space-time fractional noise  on $\R_+\times \R^d$, with index $H=(H_0,\ldots,H_d)\in (0,1)^{d+1}$. Set $H_+:=H_1+\ldots+H_d$. Consider a mollifier $\rho$ on $\R^{d+1}$, i.e. $\rho:\R^{d+1}\to \R_+$ is a smooth compactly-supported function of integral $1$, and for every $n\geq 0$, set $\rho_n(t,x):=2^{(d+1)n} \rho(2^n t,2^n x)$. Finally, for every $n\geq 0$, define
$$\dot{B}^n :=\rho_n \ast \dot{B}\, ,\quad  \<Psi>^n:=\cg \ast [\1_{\R_+}\dot{B}^n] \quad \text{and}\quad \<IPsi>^n:=\cg \ast \big[\1_{\R_+}  \<Psi>^n \big]\, .$$

\smallskip

Then, for every $\al>d-\frac12-(H_0+H_+)$ and every positive function $w\in L^1(\R^d)$, the sequence $(\<Psi>^n)$, resp. $(\<IPsi>^n)$, converges almost surely in the space $\cac(\R_+;\ch_w^{-\al})$, resp. $\cac(\R_+;\ch_w^{-\al+1})$.
\end{proposition}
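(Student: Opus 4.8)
\medskip
\noindent\textbf{Strategy.} The plan is to realise $\<Psi>^n$ and $\<IPsi>^n$ as elements of the first Wiener chaos generated by $\dot B$ and to reduce the whole statement to one Fourier-side second-moment estimate, whose integrability threshold will produce exactly the condition $\al>d-\frac12-(H_0+H_+)$. Fix $T>0$. Since $\cg\ast(\1_{\R_+}\,\cdot\,)$ acts through a space-translation-invariant operator and $\dot B$ is spatially stationary, the random field $x\mapsto\<Psi>^n_t(x)$ is spatially stationary for each fixed $(t,n)$; writing $\ch_w^{-\al}$ as the weighted $L^2$-space of $\langle\nabla\rangle^{-\al}$ (Section~\ref{subsec:set-not}), this gives
\[
\mathbb E\big\|\<Psi>^n_t\big\|_{\ch_w^{-\al}}^2=\|w\|_{L^1(\R^d)}\int_{\R^d}\frac{d\eta}{(1+|\eta|^2)^\al}\,\mathbb E\big|\cf_x(\<Psi>^n_t)(\eta)\big|^2 ,
\]
together with the same identity for $\<Psi>^n_t-\<Psi>^m_t$, and for $\<IPsi>^n_t$ with $\al$ replaced by $\al-1$. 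From the spectral representation of the fractional noise (Definition~\ref{defi:frac-noise}), whose density is $c_H\,|\tau|^{1-2H_0}\prod_{i=1}^d|\eta_i|^{1-2H_i}$, and from $\widehat{\rho_n}(\tau,\eta)=\widehat\rho(2^{-n}\tau,2^{-n}\eta)$, one computes
\[
\mathbb E\big|\cf_x(\<Psi>^n_t)(\eta)\big|^2=c_H\prod_{i=1}^d|\eta_i|^{1-2H_i}\int_{\R}|\tau|^{1-2H_0}\,\big|\widehat\rho(2^{-n}\tau,2^{-n}\eta)\big|^2\,\big|K_t(\tau,\eta)\big|^2\,d\tau ,
\]
where $K_t(\tau,\eta):=\int_0^t\frac{\sin((t-s)|\eta|)}{|\eta|}\,e^{-\imath\tau s}\,ds$.

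\medskip
\noindent\textbf{The core estimate.} The analytic heart is the bound
\[
\sup_{t\in[0,T]}\int_{\R}|\tau|^{1-2H_0}\,\big|K_t(\tau,\eta)\big|^2\,d\tau\ \lesssim_T\ (1+|\eta|)^{-1-2H_0}\qquad(|\eta|\geq1),
\]
which I would obtain by computing $K_t$ explicitly as a difference of two elementary exponential integrals — so that $\big|K_t(\tau,\eta)\big|\lesssim|\eta|^{-1}\big(\min(T,\bigl|\,|\eta|+\tau\,\bigr|^{-1})+\min(T,\bigl|\,|\eta|-\tau\,\bigr|^{-1})\big)$ — and then splitting the $\tau$-integral into the resonant band $\big|\,|\tau|-|\eta|\,\big|\leq1$ and its complement, each piece contributing $\lesssim_T(1+|\eta|)^{-1-2H_0}$; the exponent $-1-2H_0$ (rather than the naive $-2$) comes from the unit width of the resonance at $|\tau|=|\eta|$. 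Since moreover each exponent satisfies $1-2H_i>-1$, the factor $\prod_{i=1}^d|\eta_i|^{1-2H_i}$ is locally integrable and bounded radially by $(1+|\eta|)^{d-2H_+}$, whence
\[
\int_{\R^d}\frac{d\eta}{(1+|\eta|^2)^\al}\,\mathbb E\big|\cf_x(\<Psi>^n_t)(\eta)\big|^2\ \lesssim_T\ \int_{\R^d}\frac{\prod_{i=1}^d|\eta_i|^{1-2H_i}}{(1+|\eta|^2)^\al\,(1+|\eta|)^{1+2H_0}}\,d\eta ,
\]
uniformly in $n$ (using $|\widehat\rho|\leq\|\widehat\rho\|_\infty$), and the right-hand side is finite exactly when $2\al>2d-1-2H_0-2H_+$, i.e.\ $\al>d-\frac12-(H_0+H_+)$. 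The Cauchy property is then immediate: $\mathbb E\big|\cf_x(\<Psi>^n_t-\<Psi>^m_t)(\eta)\big|^2$ has the same form but with the extra factor $\big|\widehat\rho(2^{-n}\tau,2^{-n}\eta)-\widehat\rho(2^{-m}\tau,2^{-m}\eta)\big|^2$, which tends to $0$ pointwise (as $\widehat\rho(0)=\int\rho=1$) while staying bounded, so dominated convergence against the finite integral above yields $\sup_{t\in[0,T]}\mathbb E\big\|\<Psi>^n_t-\<Psi>^m_t\big\|_{\ch_w^{-\al}}^2\to0$ as $n,m\to\infty$.

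\medskip
\noindent\textbf{From fixed time to $\cac(\R_+;\cdot)$, and the case of $\<IPsi>$.} To upgrade the fixed-time $L^2(\Omega)$-Cauchy property to almost-sure convergence in $\cac([0,T];\ch_w^{-\al})$ — hence, $T$ being arbitrary, in $\cac(\R_+;\ch_w^{-\al})$ — I would: $(i)$ invoke Gaussian hypercontractivity, $\<Psi>^n$ lying in the first chaos, to promote all the $L^2(\Omega)$-bounds to $L^p(\Omega)$-bounds, $p<\infty$; $(ii)$ establish a uniform time-modulus estimate $\sup_n\mathbb E\big\|\<Psi>^n_t-\<Psi>^n_s\big\|_{\ch_w^{-\al}}^2\lesssim_T|t-s|^{2\kappa}$ for some small $\kappa>0$, by re-running the Fourier computation with $K_t-K_s$ in place of $K_t$ — the difference trades a factor $|t-s|^{2\kappa}$ against $|\eta|^{2\kappa}$, i.e.\ the loss of $\kappa$ spatial derivatives, which the \emph{strict} inequality in the hypothesis can absorb; $(iii)$ combine $(i)$--$(ii)$ with Kolmogorov's continuity criterion (taking $p>1/\kappa$) to get convergence in $L^p(\Omega;\cac([0,T];\ch_w^{-\al}))$, and finally pass to almost-sure convergence of the whole sequence via a Borel--Cantelli argument along a geometric subsequence, using the polynomial rate $|\widehat\rho(2^{-n}\,\cdot\,)-1|\lesssim2^{-n}|\,\cdot\,|$ to control the gaps. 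For $\<IPsi>^n=\cg\ast[\1_{\R_+}\<Psi>^n]$ the argument is identical and even easier: $\<IPsi>^n$ is again linear in $\dot B$, and since $\frac{|\sin((t-s)|\eta|)|}{|\eta|}\lesssim_T(1+|\eta|)^{-1}$ uniformly for $t,s\in[0,T]$, a harmless extra time-integration together with Cauchy--Schwarz gives $\mathbb E\big|\cf_x(\<IPsi>^n_t)(\eta)\big|^2\lesssim_T(1+|\eta|)^{-2}\sup_{s\in[0,T]}\mathbb E\big|\cf_x(\<Psi>^n_s)(\eta)\big|^2$, so that all the preceding integrals recur with $\al$ replaced by $\al-1$ — precisely the one-derivative gain in the statement.

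\medskip
\noindent\textbf{Expected main obstacle.} The only genuinely delicate step is the oscillatory-integral bound of the second paragraph — controlling $\int_{\R}|\tau|^{1-2H_0}\big|K_t(\tau,\eta)\big|^2\,d\tau$, and for $\<IPsi>$ the associated double time-integral, uniformly for $t$ in a compact set. It is the resonance $|\tau|=|\eta|$ of the wave kernel that produces the exponent $-1-2H_0$ and hence pins down the exact threshold; carrying this out cleanly, and propagating it into the time-modulus estimate $(ii)$, is where essentially all the work lies. The remaining ingredients — the reduction to a scalar $\eta$-integral (spatial stationarity plus $w\in L^1$), the passage to moments of all orders (first-chaos hypercontractivity), and the passage to the supremum in time and to almost-sure convergence (Kolmogorov, Borel--Cantelli) — are routine once that estimate is in hand.
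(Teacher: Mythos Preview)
The paper does not actually prove this proposition: it is quoted as \cite[Proposition~1.2]{De1} and no argument is given here. So there is no ``paper's own proof'' to compare against. That said, your strategy is fully consistent with the machinery the present paper \emph{does} display for the closely related construction of $\<IPsi2>$: your ``core estimate'' on $\int_{\R}|\tau|^{1-2H_0}|K_t(\tau,\eta)|^2\,d\tau$ is precisely Lemma~\ref{lem:recov-h-0} (itself borrowed from \cite[Corollary~2.2]{De1}), which even comes packaged with the small time-H\"older gain $s^{2\kappa}$ at the cost of $|\eta|^{2\kappa}$ that you anticipate needing in step~$(ii)$. The spherical-coordinates reduction of the $\eta$-integral (using $1-2H_i>-1$) and the resulting threshold $\al>d-\tfrac12-(H_0+H_+)$ match the computations in the proof of Proposition~\ref{prop:moment-cherry-regu}. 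Finally, your list of ``routine'' steps --- hypercontractivity, Kolmogorov, Borel--Cantelli along a geometric subsequence --- is exactly what the paper alludes to when it writes that ``the transition from these uniform bounds \ldots\ is a matter of standard arguments'' and points to \cite[Section~2.1]{De1}. In short: your outline is correct and is, as far as one can tell from the references, the intended proof.
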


\smallskip

The above result shows in particular that, as far as the construction of $(\<Psi>,\<IPsi>)$ is concerned, no restriction on the regularity of $\dot{B}$ (or more precisely its index $H$) is required. 

\smallskip

Going back to the developments \eqref{expansion-wave}-\eqref{five-central-objects}, we can henceforth concentrate on the existence issue for the second-order element $\<IPsi2>$ - just as we concentrated on the existence issue for the L{\'e}vy-area term in Proposition \ref{prop:intro-fbm}. And indeed, based on all the previous considerations, our investigations in the sequel will be motivated by the two following natural guidelines:

\com{

\textit{$(i)$ The possibility to \enquote{construct} the random object $\<IPsi2>=\cg\ast [\1_{\R_+}(\cg\ast \dot{B})^2]$ opens the way toward the interpretation and the well-posedness of equation \eqref{equa-intro}.}

\

\textit{$(ii)$ Conversely, if there is no (natural) interpretation for $\<IPsi2>$ - even as a general distribution -, then there is essentially no hope to find any (natural) interpretation for the model \eqref{equa-intro}.}

}

\

Of course, we do not pretend here that the sole existence of $\<IPsi2>$ (in situation $(i)$) immediately entails interpretation and well-posedness of equation \eqref{equa-intro} (see Remark \ref{rk:existence-not-sufficient} for further details). However, we claim that this existence is a necessary first step toward the treatment of the equation, and the impossibility to construct $\<IPsi2>$ fundamentally means ill-posedness for \eqref{equa-intro}.

\

Still keeping the pattern of Proposition \ref{prop:intro-fbm} in mind, we intend to exhibit a specific threshold for the sum $H_0+H_+$, where the situation switches from regime $(i)$ (existence of $\<IPsi2>$) to regime $(ii)$ (non-existence).

\smallskip

In this setting, observe first that, by the result of Proposition \ref{prop:psi}, the construction of $\<IPsi2>$ is straightforward whenever $H_0+H_+> d-\frac12$. Indeed, in this case, one has $\<Psi>\in \cac([0,T];\ch_w^\ka)$ for every $0<\ka< H_0+H_+-(d-\frac12)$. In particular, $\<Psi>$ is guaranteed to be a function of both the time and space variables, which immediately legitimates the interpretation of the square $\<Psi>^2$ as a basic pointwise product. From there, the interpretation of the convolution $\cg\ast [\1_{\R_+} \<Psi>^2]$, which gives birth to $\<IPsi2>$, becomes an easy task (see item $(i\text{-}a)$ of Theorem \ref{theo:threshold} for a precise statement in this configuration).

\

The situation becomes significantly more intricate when $H_0+H_+\leq d-\frac12$, since the linear solution $\<Psi>$ can only be understood as a general distribution with respect to the space variable. The interpretation of the square $(\<Psi>)^2$ must then go through a renormalization trick, implemented at the level of some approximation of $\<Psi>$. In the sequel, we will focus on Wick-type renormalization procedures: namely, given a smooth approximation $(\dot{B}^n)_{n\geq 0}$ of $\dot{B}$ and a \textit{deterministic} sequence of functions $(c_n:\R_+\times \R^d\to\R)_{n\geq 0}$, we set successively
\begin{equation}
\<Psi>^n:=\cg^n \ast [\1_{\R_+}\dot{B}^n]\, , \quad \<Psi2>^n(t,x):=(\<Psi>^n)^2(t,x)-c_n(t,x)\, , \quad \<IPsi2>^{(n)}:=\cg\ast \<Psi2>^n\, ,
\end{equation}
and then study the conditions on $(c_n)$ that could guarantee convergence of $\<IPsi2>^n$. 

\smallskip

The Wick-type renormalization method can be regarded as the most simple way to rescale an a priori diverging model $u$. In terms of local approximation (see \eqref{expansion-wave}), this deformation $u^n\to \hat{u}^n$ formally echoes as
\begin{align*}
\hat{u}^n&\approx \si(0)\, \<Psi>^n+f'(0)\si(0) \ \<IPsi>^n +\frac{f''(0)}{2}\si(0)^2\Big[ \<IPsi2>^n-\cg\ast c_n\Big]+f(0)\frac{t^2}{2}\, .    
\end{align*}

\

In the two-dimensional setting, that is when $d=2$ in \eqref{equa-intro}, first results on the success and the limits of the Wick renormalization method for $(\<Psi>)^2$ have already been obtained in \cite{De1,De2,gubi-koch-oh-1}. So far, the most complete statement in this direction can be found in \cite[Propositions 1.3 and 1.4]{De2} and summed up as follows (for the sake of conciseness, we refer the reader to \cite[Equation (4)]{De1} for the definition of the so-called harmonizable approximation of $\dot{B}$, close in spirit to the mollifying approximation).

\begin{proposition}[\cite{De2}]\label{prop:dime-2}
Fix $d=2$ and let $(\dot{B}^n)_{n\geq 0}$ be the harmonizable approximation of a fractional noise $\dot{B}$ on $\R_+\times \R^2$, with index $H=(H_0,H_1,H_2)\in (0,1)\times (0,\frac34)^2$. Set $H_+=H_1+H_2$ and for every $n\geq 0$, define successively 
\begin{equation}\label{ref-c-n}
\<Psi>^n:=\cg \ast [\1_{\R_+}\dot{B}^n]\, , \quad c_n(t,x):=\mathbb{E}\big[(\<Psi>^n)^2(t,x)\big] \quad \text{and}\quad \<Psi2>^n(t,x):=(\<Psi>^n)^2(t,x)-c_n(t,x) \, ,
\end{equation}
that is $\<Psi2>^n$ is derived from a standard Wick renormalization. In this setting:

\smallskip

\noindent
$(i)$ If $1<H_0+H_+\leq \frac32$, then for any (spatial) cut-off function $w:\R^2\to \R$, the sequence $(w\cdot \<Psi2>^n)$ converges almost surely in $\cac([0,T];\ch^{-2\al}(\R^2))$, for every $\al>\frac32-(H_0+H_1+H_2)$.

\smallskip

\noindent
$(ii)$ If $H_0+H_+\leq 1$, then for every $t>0$, every non-zero cut-off function $w:\R^2\to \R$ and \textit{every $\al\in \R$}, one has
$$\mathbb{E}\Big[\big\|w\cdot \<Psi2>^n(t,.)\big\|_{\ch^{-2\al}(\R^2)}^2\Big] \stackrel{n\to\infty}{\longrightarrow} \infty\, .$$ 
\end{proposition}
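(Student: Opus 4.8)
The plan is to reduce the whole statement to a computation in the second Wiener chaos performed on the Fourier side, and then to sharp power‑counting on the resulting frequency integrals. Starting from the harmonizable representation $\dot B^n(t,x)=\int_{\R^3}e^{\imath(\tau t+\langle\eta,x\rangle)}\hat\rho(2^{-n}\tau,2^{-n}\eta)\,|\tau|^{\frac{1-2H_0}{2}}\prod_{j=1}^2|\eta_j|^{\frac{1-2H_j}{2}}\,\widehat W(d\tau,d\eta)$ (with $\widehat W$ a Hermitian complex white noise) together with \eqref{fourier-wave-kernel}, a spatial Fourier transform shows that $\<Psi>^n_t$ is a real stationary Gaussian field with spectral density $\widehat{\mathcal K}_n(t,\eta):=c\,\phi_n(t,\eta)\prod_j|\eta_j|^{1-2H_j}\ge0$, where
\[
\phi_n(t,\eta):=\int_\R\big|\psi_t(\tau,\eta)\big|^2\big|\hat\rho(2^{-n}\tau,2^{-n}\eta)\big|^2|\tau|^{1-2H_0}\,d\tau,\qquad \psi_t(\tau,\eta):=\int_0^t e^{\imath s\tau}\,\frac{\sin((t-s)|\eta|)}{|\eta|}\,ds.
\]
Since $\dot B$ is stationary in space, the counterterm $c_n$ in \eqref{ref-c-n} is a function of $t$ only, so $w\cdot\<Psi2>^n_t$ is a genuine compactly supported element of the second chaos. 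By Wick's theorem the field $\<Psi2>^n_t$ has spectral density $c'\,\widehat{\mathcal K}^{(2)}_n(t,\eta)$ with $\widehat{\mathcal K}^{(2)}_n(t,\cdot):=\widehat{\mathcal K}_n(t,\cdot)\ast\widehat{\mathcal K}_n(t,\cdot)\ge0$, and smearing by $w$ gives the identity
\[
\mathbb E\Big[\big\|w\cdot\<Psi2>^n_t\big\|_{\ch^{-2\al}}^2\Big]=c''\int_{\R^2}\Theta_\al(\eta)\,\widehat{\mathcal K}^{(2)}_n(t,\eta)\,d\eta,\qquad \Theta_\al:=(1+|\cdot|^2)^{-2\al}\ast|\hat w|^2,
\]
where $\Theta_\al$ is strictly positive, bounded below on every bounded set for every $\al\in\R$, and $\lesssim(1+|\eta|)^{-4\al}$ when $\al>0$. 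The analogue for $\<Psi2>^n-\<Psi2>^m$ holds with $|\hat\rho(2^{-n}\cdot)|^2$ replaced by $\hat\rho(2^{-n}\cdot)\overline{\hat\rho(2^{-m}\cdot)}$ in $\phi_n$.

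The analytic core is a two-sided control of $\phi_n$. From the explicit formula $\psi_t(\tau,\eta)=\frac{1}{2\imath|\eta|}\big(e^{\imath t|\eta|}\frac{e^{\imath t(\tau-|\eta|)}-1}{\imath(\tau-|\eta|)}-e^{-\imath t|\eta|}\frac{e^{\imath t(\tau+|\eta|)}-1}{\imath(\tau+|\eta|)}\big)$ one reads off, using only $|\hat\rho|\lesssim1$, the upper bound $\phi_n(t,\eta)\lesssim(1+|\eta|)^{-1-2H_0}$ uniformly in $n$ and in $t$ over a compact set, the dominant contribution coming from the resonant frequencies $\tau=\pm|\eta|$. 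Conversely, for fixed $t>0$ I would prove a matching lower bound $\phi_n(t,\eta)\gtrsim_t|\eta|^{-1-2H_0}$ on the range $c_t\le|\eta|\le\delta 2^n$: keep only a small window $|\tau-|\eta||\le w_t$ around one resonance, on which $\hat\rho(2^{-n}\tau,2^{-n}\eta)$ stays close to $\hat\rho(0,0)=1$ (hence is bounded below), $\big|\frac{e^{\imath t(\tau-|\eta|)}-1}{\tau-|\eta|}\big|\gtrsim t$, and the other (non-resonant) term is an $O(|\eta|^{-2})$ remainder, so that $|\psi_t(\tau,\eta)|\gtrsim_t|\eta|^{-1}$ there; integrating the weight $|\tau|^{1-2H_0}\sim|\eta|^{1-2H_0}$ over the window of fixed length gives the claim. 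Obtaining this genuine lower bound — checking that the resonant contribution survives the oscillation — is the step I expect to be the main obstacle.

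Granting these estimates, item $(ii)$ follows by localization (this is the lighter of the two, so I would do it first). Restrict the outer $\eta$-integral to $\{|\eta|\le1\}$, where $\Theta_\al$ is bounded below for every $\al$, and inside $\widehat{\mathcal K}^{(2)}_n(t,\eta)=\int\widehat{\mathcal K}_n(t,\eta-\xi)\widehat{\mathcal K}_n(t,\xi)\,d\xi$ restrict $\xi$ to a cone $\{c_t\le|\xi|\le\delta 2^n,\ |\xi_1|,|\xi_2|\gtrsim|\xi|\}$; there $|\eta-\xi|\sim|\xi|$, hence $\widehat{\mathcal K}_n(t,\eta-\xi)\widehat{\mathcal K}_n(t,\xi)\gtrsim_t|\xi|^{-2-4H_0}\prod_j|\xi_j|^{2-4H_j}\sim|\xi|^{2-4H_0-4H_+}$, so
\[
\mathbb E\Big[\big\|w\cdot\<Psi2>^n_t\big\|_{\ch^{-2\al}}^2\Big]\gtrsim_{t,\al}\int_{c_t}^{\delta 2^n}r^{\,3-4(H_0+H_+)}\,dr,
\]
which tends to $\infty$ with $n$ exactly when $H_0+H_+\le1$; since $\al\in\R$ was arbitrary, this is the divergence asserted in $(ii)$.

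For item $(i)$, the same power counting used as an \emph{upper} bound gives $\widehat{\mathcal K}^{(2)}_n(t,\eta)\lesssim(1+|\eta|)^{4-4(H_0+H_+)}$ uniformly in $n$ and $t\in[0,T]$: convergence of the $\xi$-convolution at infinity uses $H_0+H_+>1$, integrability of the local singularities of $\prod_j|\xi_j|^{2-4H_j}$ at $\xi\in\{0,\eta\}$ uses $H_j<\tfrac34$, convergence of the angular integrals uses $H_j<1$, and the fact that the frequencies of size $\sim|\eta|$ dominate the $\xi$-convolution uses $H_0+H_+\le\tfrac32$ (which also forces $\al>\tfrac32-(H_0+H_+)\ge0$, so that $\Theta_\al$ genuinely decays). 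Hence $\int\Theta_\al(\eta)\widehat{\mathcal K}^{(2)}_n(t,\eta)\,d\eta\lesssim\int(1+|\eta|)^{-4\al+4-4(H_0+H_+)}\,d\eta$, finite precisely when $\al>\tfrac32-(H_0+H_+)$, a bound uniform in $n$ and $t\in[0,T]$. Running the same computation for $\<Psi2>^n-\<Psi2>^m$ produces an integrand dominated by $C(1+|\eta|)^{4-4(H_0+H_+)}\Theta_\al(\eta)$ that tends to $0$ pointwise because $\hat\rho(2^{-n}\cdot)\overline{\hat\rho(2^{-m}\cdot)}\to1$; dominated convergence then shows $(w\cdot\<Psi2>^n_t)_{n}$ is Cauchy in $L^2(\Omega;\ch^{-2\al})$ for every such $\al$. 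Finally, the analogous estimate applied to the time increments $\<Psi2>^n_t-\<Psi2>^n_s$ yields uniform-in-$n$ Hölder control of some order $\kappa>0$ at a slightly worse regularity $\ch^{-2\al'}$; combining this with the equivalence of $L^p$-moments on a fixed Wiener chaos and a Kolmogorov-type continuity argument upgrades the $L^2$-convergence to almost-sure convergence in $\cac([0,T];\ch^{-2\al})$, completing $(i)$.
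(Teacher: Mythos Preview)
The paper does not prove this proposition: it is quoted from \cite{De2} as background for the main result, Theorem~\ref{theo:threshold}, so there is no proof in the present paper to compare against. That said, your strategy is the natural one and closely parallels the arguments the paper develops for Theorem~\ref{theo:threshold}: the two-sided control of $\phi_n$ via the resonance at $\tau=\pm|\eta|$ is exactly the mechanism behind Lemma~\ref{lem:recov-h-0} (the upper bound) and the decomposition of Lemma~\ref{lem:decompo-m-r} in Section~\ref{sec:item-ii} (the lower bound, isolating a resonant main term $M^n_\varphi$ and a remainder $R^n_\varphi$). Your identification of the lower bound as the main obstacle is accurate, and your window-around-the-resonance argument is the right idea.

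One point where your sketch is thinner than the actual work required: in part~$(i)$ the claimed bound $\widehat{\mathcal K}^{(2)}_n(t,\eta)\lesssim(1+|\eta|)^{4-4(H_0+H_+)}$ does not follow from a one-line power count. The integrand in the convolution carries the anisotropic weight $\prod_j|\xi_j|^{1-2H_j}\prod_j|\eta_j-\xi_j|^{1-2H_j}$, whose singularities lie along the coordinate hyperplanes $\{\xi_j=0\}$ and $\{\xi_j=\eta_j\}$, not only at the two points $\xi=0$ and $\xi=\eta$. Your remark that $H_j<\tfrac34$ makes $|\xi_j|^{2-4H_j}$ locally integrable addresses only the diagonal region where $\xi_j$ and $\eta_j-\xi_j$ are comparable; the off-diagonal regions require a separate argument. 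The paper handles exactly this difficulty for the analogous integral in Lemma~\ref{main-technical}, via a decomposition of $(\R_+)^2$ into the sets $\mathcal Q_+^{(0)}$, $\mathcal Q_+^{(1)}$ coordinate by coordinate, followed by a Cauchy--Schwarz step. This is not a fatal gap in your plan, but the bookkeeping is heavier than your summary suggests, and it is precisely where the constraint $H_j<\tfrac34$ is genuinely used.
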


\

At first sight, the above assertions $(i)$-$(ii)$ indeed emphasizes a clear change of regime on the frontier $H_0+H_+=1$ (to be compared with the result of Proposition \ref{prop:intro-fbm} for the standard equation). Namely, when $H_0+H_+> 1$, we are able to interpret the (Wick) square $\<Psi2>$ as an element in the scale of spaces $\cac([0,T],\ch^{-2\al})$ ($\al\in \R$), whereas it is no longer possible to do so when $H_0+H_+\leq 1$. Nevertheless, in the general framework of the present study, the statement of Proposition \ref{prop:dime-2} still leaves several important questions in abeyance:

\

\noindent
$(\mathfrak{a})$ As we have just mentionned it, items $(i)$ and $(ii)$ of Proposition \ref{prop:dime-2} both rely on the scale of the functions of time with values in a Sobolev space $\ch^{-2\al}$, for some $\al\in \R$. In fact, this choice is mostly motivated by the convenient properties of the wave kernel $\cg$ in this scale of spaces, which, in \cite{De2}, pave the way toward a fixed-point argument for equation \eqref{equa-intro} (see Remark \ref{rk:existence-not-sufficient}). Putting this existence issue in a broader context, \textit{we may naturally wonder whether the explosion phenomenon in item $(ii)$ would still occur for a more general space-time distributions topology}. 

\smallskip

Consider the basic example provided by the noise $\dot{B}$ itself: we know that for every fixed $t\geq 0$, the approximation $x\mapsto \dot{B}^n_t(x)$ cannot converge as a space distribution (the noise is not a  function of time), whereas $(t,x)\mapsto \dot{B}^n_t(x)$ clearly converges to $\dot{B}$ as a space-time distribution. Based on the sole statement of Proposition \ref{prop:dime-2}, one cannot rule out the possibility of a similar behaviour for $\<Psi2>^n$. 

\

\noindent
$(\mathfrak{b})$ Perhaps more importantly: Proposition \ref{prop:dime-2} investigates the existence/non-existence issue for the (Wick) square $\<Psi2>$. And yet, as we have seen it in the above developments \eqref{expansion-wave}-\eqref{five-central-objects}, the central element at the core of the dynamics in \eqref{equa-intro} is not exactly $\<Psi2>$, but the convolution of $\<Psi2>$ with $\cg$, that is $\<IPsi2>$.  

It turns out that in several situations studied in the literature, the influence of convolution with a given (deterministic) kernel is not negligible when it comes to such existence issues. This can be basically observed on the transition from the noise $\dot{B}$ to the convolution $\<Psi>:=\cg\ast \dot{B}$. Indeed, while $\dot{B}$ can only be regarded as a general space-time distribution, it is possible to construct $\<Psi>$ as a function of time (we have seen it in Proposition \ref{prop:psi}). Many similar behaviours have been exhibited for the nonlinear heat equation on the torus: see for instance the contrast between Lemma 16 and Proposition 24 in \cite{e-jentzen-shen}, or between items (i) and (ii) in \cite[Proposition 1.9]{oh-okamoto}.

In light of these examples, it seems reasonable to think that convolution with $\cg$ might have an impact on the assertions of Proposition \ref{prop:dime-2}, and in particular \textit{we can legitimately question the exact value of the change-of-regime threshold when replacing $\<Psi2>^n$ with $\<IPsi2>^n:=\cg \ast \<Psi2>^n$}.

\

\noindent
$(\mathfrak{c})$ The results in Proposition \ref{prop:dime-2} lean on the standard Wick renormalization procedure, as it can be seen from the specific definition of $c_n$ in \eqref{ref-c-n}. Thus, based on this sole statement, it is not obvious to determine whether the explosion phenomenon in item $(ii)$ could be avoided with a more suitable (but still deterministic) renormalization sequence $c_n$, that is with a more general \textit{Wick-type} renormalization method.

\

\noindent
$(\mathfrak{d})$ Last but not least, the statement of Proposition \ref{prop:dime-2} is limited to the two-dimensional case, and to the harmonizable approximation of $\dot{B}$. \textit{It is then natural to ask if (and how) these considerations - including the above observations $(\mathfrak{a})$-$(\mathfrak{b})$-$(\mathfrak{c})$ - could be extended to any dimension $d\geq 3$, and to the more standard mollifying approximation of $\dot{B}$}.

\

For all these reasons, and also given the fundamental role of the process $\<IPsi2>$ in the dynamics of eq. \eqref{equa-intro}, we have found it important to search for a result that would both refine and generalize Proposition \ref{prop:dime-2}.

\section{Main result}

For a clear statement of our main contribution (Theorem \ref{theo:threshold} below), let us introduce a few definitions and notation first.

\subsection{Setting and notations}\label{subsec:set-not}
For any arbitrary domain $\Omega$ of a Euclidean space, we denote by $\cd(\Omega)$ the space of test-functions $\vp$ on $\Omega$, that is $\vp:\Omega\to \R$ is smooth and compactly supported.

For every function $\Psi\in L^2(\R\times \R^d)$, resp. $\psi\in L^2(\R^d)$, we denote the space-time, resp. space, Fourier transform by
$$\cf\Psi (\xi,\eta)=\int_{\R\times \R^d} dt dx \, e^{-\imath \xi t }e^{-\imath \langle \eta,x\rangle}\Psi(t,x)\, , \quad \text{resp.} \quad \cf_x\psi (\eta)=\int_{\R^d} dx \, e^{-\imath \langle \eta,x\rangle}\psi(x)\, .$$
We call a weight in $\R^d$ any strictly positive function $w\in L^1(\R^d)$. Given such a weight $w$, we define $L^p_w=L^p_w(\R^d)$ as the set of functions $f:\R^d \to \R$ for which
$$\|f\|_{L^p_w}^p:=\int_{\R} |f(x)|^p \, w(x)\, dx \ < \ \infty\, .$$
Then, for every $\al\in \R$, we define $\ch^{\al}_w=\ch^{\al}_w(\R^d)$ as the completion of $\cd(\R^d)$ with respect to the norm
\begin{equation}
\big\|u\big\|_{\ch^{\al}_w}:=\big\|\cf^{-1}\big( \{1+|.|^2\}^{\frac{\al}{2}}\cf u\big)\big\|_{L^2_w(\R^d)}\, .
\end{equation}

\

Let us now specify the definition of the fractional noise at the core of our investigations (this actually corresponds to the most widely used definition in the fractional literature).
\begin{definition}\label{defi:frac-noise}
Fix $H=(H_0,\ldots,H_d)\in (0,1)^{d+1}$. On a complete filtered probability space, we call a fractional noise on $\R^{d+1}$ with (Hurst) index $H$, and denote by $\dot{B}$, any centered Gaussian family
$$\big\{\dot{B}(\Phi), \ \Phi\in \cd(\R^{d+1})\big\}$$
with covariance given by the formula: for all $\Phi,\Psi\in \cd(\R^{d+1})$,
\begin{equation}\label{frac-noise-covariance}
\mathbb{E}\big[ \langle \dot{B},\Phi\rangle \, \langle \dot{B},\Psi\rangle \big]=\int_{\R\times\R^d} \mu_H(d\xi,d\eta)\,  \cf \Phi(\xi,\eta)  \overline{\cf \Psi(\xi,\eta)} , 
\end{equation}
where
\begin{equation}\label{density-noise}
\mu_H(d\xi,d\eta):=\frac{d\xi}{|\xi|^{2H_0-1}}\prod_{i=1,\ldots,d}\frac{d\eta_i}{|\eta_i|^{2H_i-1}}\, .
\end{equation}
\end{definition}

\

As we mentionned it earlier, our approximation of the noise $\dot{B}$, leading ultimately to the construction of $\<IPsi2>$, will be derived from a standard mollifying procedure.

\begin{definition}\label{defi:mollif}
We call a mollifier on $\R^{d+1}$ any smooth integrable function $\rho :\R^{d+1} \to \R_+$ such that $\mathcal F{\rho}$ is Lipschitz and $\int_{\R^{d+1}} \rho(s,x) \, ds dx=1$. 

\smallskip

Given a mollifier $\rho$ on $\R^{d+1}$, we define the mollifying sequence $(\rho_n)_{n\geq 0}$ by the formula: for all $\xi\in \R$ and $\eta\in \R^d$, $\rho_n(\xi,\eta):=2^{n(d+1)}\rho(2^n\xi,2^n \eta)$.

\end{definition}

\begin{remark}\label{remark on weights}
Following the above definition, any positive test-function $\rho\in \cd(\R^{d+1})$ with integral $1$ is a mollifier. The definition also covers the Gaussian-type mollifier used for instance in \cite[Section 3.2]{HHNT} or \cite[Section 5]{HN}, that is $\rho(s,x):=\1_{[0,1]}(s) p_1(x)$, where $p_1$ refers to the Gaussian density at time $1$.
\end{remark}

\

The notation $\cg$ for the wave kernel has already been introduced through \eqref{fourier-wave-kernel}. Recall that in dimension $d=1,2$, and for any fixed $t\geq 0$, this kernel $\cg_t$ is a well-defined function on $\R^d$, while for $d\geq 3$, $\cg_t$ can only be interpreted as a general space distribution. Therefore, for more rigour in our subsequent computations (where the kernel will be treated as a function), let us consider the following smooth approximation of $\cg_t$: for every $n \geq 1$, $t\in \R$ and $x\in \R^d$, we set 
\begin{equation}\label{defi-cg-n}
\cg^n(t,x):=\1_{\{t\geq 0\}}\int_{|\eta|\leq n}d\eta\, e^{\imath \langle x,\eta\rangle} \frac{\sin(t|\eta|)}{|\eta|} \, .
\end{equation}

\

\subsection{Main result}

We are now in a position to state our main contribution regarding existence/non-existence of the process $\<IPsi2>$ at the core of the dynamics of equation \eqref{equa-intro}.

\begin{theorem}\label{theo:threshold}
Fix a space dimension $d\geq 1$, as well as a weight $w$ on $\R^d$ and a mollifier $\rho$ on $\R^{d+1}$. On a complete filtered probability space, let $\dot{B}$ be a fractional noise of index $H\in (0,1)\times (0,\frac34)^{d}$, and for any given sequence of real functions $(c_n:\R_+\times \R^d\to\R)_{n\geq 0}$, define successively
\begin{equation}\label{defi:b-do-n}
\dot{B}^n :=\rho_n \ast \dot{B}\, ,\quad  \<Psi>^n:=\cg^n \ast [\1_{\R_+}\dot{B}^n]\, ,
\end{equation}
\begin{equation}\label{ref-c-n-theo}
\<Psi2>^n(t,x):=(\<Psi>^n)^2(t,x)-c_n(t,x) \quad \text{and} \quad \<IPsi2>^{(n)}:=\cg^n\ast [\1_{\R_+}\<Psi2>^n] \, .
\end{equation}

\

\noindent
In this setting, the following picture holds true (as before, we denote $H_+:=H_1+\ldots+H_d$):

\smallskip

\noindent
$(i\text{-}a)$ If $H_0+H_+>d-\frac12$, then by choosing $c_n(t,x):=0$ (i.e. no renormalization), the sequence $(\<IPsi2>^{(n)})_{n\geq 1}$ converges almost surely in the space $\cac(\R_+;\ch^{1+\ga}_{w})$, for every $0<\ga<H_0+H_+-(d-\frac12)$. 

\

\noindent
$(i\text{-}b)$ If $\frac{3d}{4}-\frac12 <H_0+H_+\leq d-\frac12$, then by considering the standard Wick renormalization procedure, that is by choosing
$$c_n(t,x):=\mathbb{E}\big[\<Psi>^n(t,x)^2\big]\, ,$$
the sequence $(\<IPsi2>^{(n)})_{n\geq 1}$ converges almost surely in the space $\cac(\R_+;\ch^{1-2\alpha}_{w})$, for every 
\begin{equation}\label{condition-alpha}
\al>\max\bigg(d-\frac12-(H_0+H_+),\frac{d-1}{4}\bigg)\, .
\end{equation}

\

\noindent
$(ii)$ If $H_0+H_+\leq \frac{3d}{4}-\frac12$, then there exists a non-empty subset $\mathcal{E}\subset \cd(\R_+\times \R^d)$ such that for any $\Phi\in \mathcal{E}$ and any sequence of real functions $(c_n:\R_+\times \R^d\to\R)_{n\geq 0}$, one has
\begin{equation}\label{divergence-theo}
\mathbb{E}\Big[\big|\langle\<IPsi2>^{(n)},\Phi\rangle\big|^2 \Big] \stackrel{n\to\infty}{\longrightarrow} \infty\, .
\end{equation}
\end{theorem}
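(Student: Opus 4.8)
\textbf{Proof plan for Theorem~\ref{theo:threshold}.}
The three items split naturally into two positive constructions and one divergence statement; I will sketch all three but focus the discussion on the harder item~$(ii)$.

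For item $(i\text{-}a)$, the plan is to use Proposition~\ref{prop:psi}: when $H_0+H_+>d-\frac12$ the linear solution $\<Psi>$ lives almost surely in $\cac(\R_+;\ch^{\ga}_w)$ for $0<\ga<H_0+H_+-(d-\frac12)$. Since $\ch^\ga_w$ is a function space (the Sobolev exponent is positive), the pointwise square $\<Psi>^2$ makes sense and lies in a Sobolev space of the same or slightly worse positive regularity, uniformly in $n$; convolving with $\cg^n$ gains one degree of spatial regularity and preserves continuity in time, so $\<IPsi2>^{(n)}\to\cg\ast[\1_{\R_+}\<Psi>^2]$ in $\cac(\R_+;\ch^{1+\ga}_w)$. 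The only care needed is the uniform control of the approximation error coming from $\cg^n\to\cg$ and $\dot B^n\to\dot B$, which is routine given the Fourier-analytic estimates on $\cf_x\cg_t(\eta)=\sin(t|\eta|)/|\eta|$.

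For item $(i\text{-}b)$, I would compute the second-moment (Wiener chaos) bound for $\<IPsi2>^{(n)}$ directly in $\ch^{1-2\al}_w$. Writing $\<Psi2>^n$ in the second Wiener chaos, its covariance is expressed through the spectral measure $\mu_H$ and the kernels $\cf_x\cg^n$; the Wick subtraction of $c_n(t,x)=\mathbb E[\<Psi>^n(t,x)^2]$ exactly removes the diagonal (divergent) contraction. One is then left with an off-diagonal integral of the form $\int\int \mu_H(d\xi_1,d\eta_1)\mu_H(d\xi_2,d\eta_2)\,|\widehat{\cg}(\xi_1+\xi_2,\eta_1+\eta_2)|^2\,(\text{products of }\widehat{\cg}\text{ in }(\xi_i,\eta_i))\,\langle 1+|\eta_1+\eta_2|^2\rangle^{1-2\al}$, and one must show this converges (uniformly in $t$ on compacts) as $n\to\infty$. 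The two terms in the max in \eqref{condition-alpha} correspond to the two mechanisms that can make the integral diverge: the $d-\frac12-(H_0+H_+)$ term controls the large-frequency behaviour of a single factor (as in Proposition~\ref{prop:psi}), while the new $\frac{d-1}{4}$ term controls the integrability near the light cone $|\xi|=|\eta|$ of the \emph{convolved} outer kernel $\widehat{\cg}(\xi_1+\xi_2,\eta_1+\eta_2)$ paired against itself — this is precisely where convolution with $\cg$ helps relative to Proposition~\ref{prop:dime-2}. Kolmogorov's continuity criterion in the time variable, applied to higher moments (Gaussian hypercontractivity makes $L^2$ bounds suffice up to constants), then upgrades $L^2$ convergence to almost-sure convergence in $\cac(\R_+;\ch^{1-2\al}_w)$.

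Item $(ii)$ is the heart of the theorem and the main obstacle. The strategy, paralleling the Lévy-area explosion \eqref{explosion-levy} proved in Appendix~\ref{append:levy-area}, is: \emph{the deterministic counterterm $c_n$ can only act on the chaos-zero (expectation) component of $\langle\<IPsi2>^{(n)},\Phi\rangle$, hence it cannot reduce the variance}. Concretely, $\langle\<IPsi2>^{(n)},\Phi\rangle=\langle\cg^n\ast[\1_{\R_+}(\<Psi>^n)^2],\Phi\rangle-\langle\cg^n\ast[\1_{\R_+}c_n],\Phi\rangle$, and the second piece is deterministic, so
\[
\var\big(\langle\<IPsi2>^{(n)},\Phi\rangle\big)=\var\big(\langle\cg^n\ast[\1_{\R_+}(\<Psi>^n)^2],\Phi\rangle\big),
\]
which is independent of $(c_n)$. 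It therefore suffices to exhibit a non-empty family $\mathcal E\subset\cd(\R_+\times\R^d)$ for which this variance blows up. Expanding $(\<Psi>^n)^2$ in the second Wiener chaos and using the isometry~\eqref{frac-noise-covariance}, the variance equals (a constant times)
\[
\int_{(\R\times\R^d)^2}\prod_{i=1}^{2}\frac{d\xi_i\,d\eta_i}{|\xi_i|^{2H_0-1}|\eta_i|^{2H_i-1}_{\text{(per-coord.)}}}\ \big|K^n_\Phi(\xi_1+\xi_2,\eta_1+\eta_2)\big|^2\ \prod_{i=1}^2\big|\widehat{\cg^n}(\xi_i,\eta_i)\big|^2,
\]
where $K^n_\Phi$ denotes the Fourier transform of $\cg^n\ast[\1_{\R_+}(\cdot)]$ tested against $\Phi$ — schematically $K^n_\Phi(\xi,\eta)\approx\widehat{\cg}(\xi,\eta)\,\widehat{\Phi}(\text{something involving }\xi,\eta)$, with $\widehat{\cg}$ carrying the extra decay/singularity of the outer convolution. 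The plan is to choose $\Phi$ so that $\widehat{\Phi}$ does not vanish on the relevant region and then to lower-bound the integral by restricting to the dyadic region $|\xi_i|\sim|\eta_i|\sim 2^k$ with $\eta_1,\eta_2$ nearly anti-parallel, so that $|\eta_1+\eta_2|$ stays bounded while each $|\widehat{\cg^n}(\xi_i,\eta_i)|$ is of order $2^{-k}$ away from, but not too far from, the cone; careful bookkeeping of the $d$ angular/radial variables in each $\eta_i$ and of the time-frequency variables $\xi_i$ shows the dyadic contribution is bounded below by $c\,2^{k(\text{something}(d,H))}$, and the exponent is positive precisely when $H_0+H_+\leq\frac{3d}{4}-\frac12$. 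Summing over $0\le k\le n$ (or integrating up to frequency $n$, the cutoff in $\cg^n$) then forces the variance to $\infty$. The hard part will be the multivariable stationary-phase/geometric estimate near the light cone in general dimension $d$ — identifying the optimal region, handling the interaction between the cone of the inner kernels $\widehat{\cg}(\xi_i,\eta_i)$ and the cone of the outer kernel $\widehat{\cg}(\xi_1+\xi_2,\eta_1+\eta_2)$, and verifying that the threshold exponent is exactly $\frac{3d}{4}-\frac12$ rather than something weaker — which is the genuinely new computation extending \cite{De2} from $d=2$ and the harmonizable approximation to all $d\geq 1$ and the mollifying approximation.
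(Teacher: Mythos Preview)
Your high-level strategy for all three items is the same as the paper's, and in particular your reduction of $(ii)$ to a variance statement independent of $(c_n)$ is exactly right. But two concrete points in your plan for $(ii)$ would not go through as written.

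First, the variance does \emph{not} factor as $\big|K^n_\Phi(\xi_1+\xi_2,\eta_1+\eta_2)\big|^2\prod_i|\widehat{\cg^n}(\xi_i,\eta_i)|^2$. The nested time integrals $\int_0^t\int_0^u\int_0^u$ in the definition of $\<IPsi2>^{(n)}$ couple the three wave kernels; the integrand is
\[
\Big|\int_0^\infty\! dt\,\vp(t)\!\int_0^t\! du\,e^{iu(\xi+\xiti)}\cf_x(\cg^n_{t-u})(\eta+\etati)\!\int_0^u\! ds\,e^{-i\xi s}\cf_x(\cg^n_s)(\eta)\!\int_0^u\! dr\,e^{-i\xiti r}\cf_x(\cg^n_r)(\etati)\Big|^2,
\]
and a dyadic lower bound is blocked by possible phase cancellations inside this absolute value. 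The paper's device (Lemma~\ref{lem:decompo-m-r}) is to write $\cf_x(\cg^n_s)(\eta)=\frac{1}{2i|\eta|}(e^{is|\eta|}-e^{-is|\eta|})$, isolate the resonant piece $M^n_\vp$ carrying the factor $e^{-is(\xi-|\eta|)}$, and bound the remainder $R^n_\vp$ separately; one then proves $\cj^n_M\to\infty$ while $\cj^n_R$ and the cross term $\cj^n_{M,R}$ stay uniformly bounded. This decomposition is the missing technical idea in your sketch, and it is precisely what turns your ``stationary-phase near the cone'' heuristic into an actual lower bound.

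Second, your claim that the dyadic exponent is \emph{strictly positive} when $H_0+H_+\leq\frac{3d}{4}-\frac12$ fails at the threshold: there the divergence is only logarithmic ($\int_2^{n/2}dr/r$). The paper handles the strict sub-threshold case by first passing to auxiliary indices $H_i'\geq H_i$ with $H_0'+H_+'=\frac{3d}{4}-\frac12$ (equation~\eqref{h-prime}), then proving logarithmic blow-up at the threshold.

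Two smaller remarks: for $(i\text{-}a)$ the paper does not square $\<Psi>$ pointwise but bounds $\mathbb{E}\big[\|\<IPsi2>^n_t\|^2_{\ch^{1+\ga}_w}\big]$ directly via \eqref{cova-cherry} (Proposition~\ref{prop:moment-cherry-regu}); your route would need a product estimate in $\ch^\ga_w$. For $(i\text{-}b)$, the constraint $\al>\frac{d-1}{4}$ does not come from the outer light cone but from a Cauchy--Schwarz argument over resonant/non-resonant sectors in each spatial coordinate (Lemma~\ref{main-technical}).
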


\

The above properties $(i)$-$(ii)$ offer a striking illustration of the announced breakup phenomenon for $\<IPsi2>$ - and thus indirectly for the equation \eqref{equa-intro} itself -, on the frontier $H_0+H_+=\frac{3d}{4}-\frac12$: 

\smallskip

\noindent
$\bullet$ If $H_0+H_+>\frac{3d}{4}-\frac12$, then the process $\<IPsi2>$, defined as the limit of $\<IPsi2>^n$, can be interpreted as a function of time with values in a Sobolev space of (moderate) order $1-2\al \approx 1-\frac{d}{2}$.

\smallskip

\noindent
$\bullet$ If $H_0+H_+\leq \frac{3d}{4}-\frac12$, then a fundamental singularity issue prevents us from constructing $\<IPsi2>$, and accordingly any hope to analyze equation \eqref{equa-intro} essentially disappears in this rough situation.

\

In this way, Theorem \ref{theo:threshold} clearly corresponds to the exact counterpart of Proposition \ref{prop:intro-fbm} in the wave setting. When compared with the (pre-existing) result of Proposition \ref{prop:dime-2}, the statement also provides full answers to the four issues $\mathfrak{(a)}$-$\mathfrak{(b)}$-$\mathfrak{(c)}$-$\mathfrak{(d)}$ that we have addressed at the end of Section \ref{sec:intro}. Indeed:

\smallskip

\noindent
$\mathfrak{(a)}$ It points out that, in the situation where $H_0+H_+\leq \frac{3d}{4}-\frac12$, it is not possible to avoid the explosion phenonemon by extending the topological setting to more general space-time distributions, which shows how deep the singularity problem in this case.

\smallskip

\noindent
$\mathfrak{(b)}$ When $d=2$, it proves that the change-of-regime frontier $H_0+H_+=1$ for $\<Psi2>$ stays the same for $\<IPsi2>$, which sharply contrasts with similar situations in the heat setting. The observation would actually remain true for any $d\geq 1$.

\smallskip

\noindent
$\mathfrak{(c)}$ It confirms that the divergence in \eqref{divergence-theo} applies to any Wick-type renormalization procedure, that is to any sequence $(c_n)$, and not only to the standard one.

\smallskip

\noindent
$\mathfrak{(d)}$ It generalizes the results to any space dimension $d\geq 2$, and to the standard mollifying approximation.

\

Let us complete the statement of Theorem \ref{theo:threshold} with two additional comments.


\begin{remark}\label{rk:existence-not-sufficient}
Even if the existence of $\<IPsi2>$ clearly appears to us as a \textit{necessary} condition for a fruitful analysis of \eqref{equa-intro}, we are aware that the result of Theorem \ref{theo:threshold} (in the \enquote{open} situation $H_0+H_+>\frac{3d}{4}-\frac12$) is not \textit{sufficient} to derive interpretation and well-posedness of the equation.

\smallskip

In dimension $d=2$, a full treatment of the model \eqref{equa-intro} - with $f(u):=u^2$ and $\si(u):=1$ - can be found in \cite{De2}, under the same assumption $H_0+H_+>1$ (see also \cite{gubi-koch-oh-1} for a study in the specific white-noise situation). It turns out that, as $H_0+H_+$ gets close to $1$, not only does the analysis rely on $\<IPsi2>$, but it also requires the construction of an additional third-order processes $\<PsiIPsi2>$. Once endowed with those elements, the solution to \eqref{equa-intro} is obtained via a deterministic fixed-point argument, using the so-called Strichartz inequalities.

\smallskip

In dimension $d=3$, and as far as we know, the interesting rough regime $\frac74<H_0+H_+\leq \frac52$ has only been considered through the specific white-noise situation $H_0=\ldots=H_3=\frac12$, with also $f(u):=u^2$ and $\si(u):=1$. This analysis is the main topic of \cite{gubi-koch-oh-2}, where the authors appeal to the sophisticated machinery of paracontrolled calculus in order to set up a fixed-point procedure (note in particular the central role of $\<IPsi2>$ in \cite[Theorem 1.12]{gubi-koch-oh-2}). 

\smallskip
 
Finally, we are not aware of any full treatment of the model \eqref{equa-intro} for $d\geq 4$ and $\frac{3d}{4}-\frac12<H_0+H_+\leq d-\frac12$.

\end{remark}

\smallskip

\begin{remark}
In Theorem \ref{theo:threshold}, the condition $H_i<\frac34$ (for $i\geq 1$) guarantees in particular that the spatial density of the noise (given in \eqref{density-noise}) belongs to $L^2_{\text{loc}}(\R^d)$, which we will use in the construction of $\<IPsi2>$ (see e.g. the change of coordinates in \eqref{change-of-coord}). However, it should certainly be possible to handle the situation where $H_i>\frac34$ (for some $i\geq 1$) with similar arguments, at the price of an adaptation - and a less elegant formulation - of the threshold condition $H_0+H_+=\frac{3d}{4}-\frac12$.

\smallskip

On the other hand, the condition $H_i<\frac34$ still covers the most standard model considered in the literature, namely the white noise case $H_i=\frac12$ for $i=0,\ldots, d$. Note that in the latter configuration, the threshold condition of Theorem \ref{theo:threshold} reads as $\frac{d+1}{2}>\frac{3d}{4}-\frac12$, and accordingly, when $\dot{B}$ is a white noise, we cannot expect any Wick-type interpretation of equation \eqref{equa-intro} for $d\geq 4$.
\end{remark}

\

The rest of the paper is organized as follows. In Section \ref{subsec:cova} below, we briefly emphasize the covariance formulae of the processes $(\<Psi>^n,\<Psi2>^n)$, which will be the starting point of our analysis toward both items $(i)$ and $(ii)$ of Theorem \ref{theo:threshold}. Then Section \ref{sec:item-i} will be devoted to the proof of the existence part of the statement (item $(i)$), while Section \ref{sec:item-ii} will focus on the diverging property in the irregular regime (item $(ii)$). Finally, the appendix section contains further details about the divergence of the fractional L{\'e}vy area associated with \eqref{back-to-sde}, which allows for a direct comparison with the phenomenon in the wave setting.

\subsection{Covariance formulae}\label{subsec:cova}

\begin{lemma}\label{lem:cova}
In the setting of Theorem \ref{theo:threshold}, and for any fixed $n\geq 1$, let $\<Psi>^n$ and $\<Psi2>^n$ be the processes defined in \eqref{defi:b-do-n} and \eqref{ref-c-n-theo}, for a given (deterministic) function $c_n:\R_+\times \R^d \to\R$.

\smallskip

\noindent
$(1)$ For all $t,\tti\geq 0$ and $y,\yti\in \R^d$, it holds that
\begin{align}
&\mathbb{E}\Big[ \<Psi>^n(t,y) \overline{\<Psi>^n(\tti,\yti)} \Big]=\mathbb{E}\Big[ \<Psi>^n(t,y) \<Psi>^n(\tti,\yti) \Big]\nonumber\\
&=\int_{\R\times \R^d} \mu_{H}^{(n)}\!(d\xi,d\eta) \, e^{\imath \xi(t-\tti)} e^{\imath \langle \eta,y-\yti\rangle}\int_0^t ds \, e^{-\imath \xi s} \cf_x(\cg^n_s)(\eta)\int_0^{\tti} d\sti \, e^{\imath \xi \sti} \overline{\cf_x(\cg^n_{\sti})}(\eta)\, ,\label{cova-luxo}
\end{align}
where we have set
\begin{equation}\label{defi-mu-n}
\mu_{H}^{(n)}\!(d\xi,d\eta) :=\mu_H(d\xi,d\eta)\big|\cf \rho_n(-\xi,-\eta)\big|^2\, .
\end{equation}

\smallskip

\noindent
$(2)$ For all $t,\tti\geq 0$ and $y,\yti\in \R^d$, it holds that
\begin{align}\label{cova-cherry}
\mathbb{E}\Big[ \<Psi2>^n(t,y)\overline{\<Psi2>^n(\tti,\yti)}\Big]=2\Big(\mathbb{E}\Big[ \<Psi>^n(t,y) \overline{\<Psi>^n(\tti,\yti)} \Big]\Big)^2+\Big\{\mathbb{E}\Big[ \<Psi>^n(t,y)^2\Big]-c_n(t,y)\Big\}\Big\{\mathbb{E}\Big[ \overline{\<Psi>^n(\tti,\yti)}^2\Big]-\overline{c_n(\tti,\yti)}\Big\}\, .
\end{align}
\end{lemma}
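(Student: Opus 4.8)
The plan is to establish both covariance formulae by unwinding the definitions of the relevant processes and applying the fundamental covariance identity \eqref{frac-noise-covariance} for the fractional noise, together with the Gaussian computation rules (Wick's theorem / isometry).

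\textbf{Part (1).} First I would write $\<Psi>^n(t,y) = (\cg^n \ast [\1_{\R_+}\dot B^n])(t,y)$ and unfold this convolution. Since $\dot B^n = \rho_n \ast \dot B$, one can bring the two mollifications together and reorganize so that $\<Psi>^n(t,y)$ is expressed as $\langle \dot B, \Phi^n_{t,y}\rangle$ for an explicit test-function-like kernel $\Phi^n_{t,y}$ whose space-time Fourier transform factors nicely. Concretely, writing $\cg^n_s(x)$ via its defining integral \eqref{defi-cg-n} and convolving in the time variable over $[0,t]$ against $\1_{\R_+}$ produces the factor $\int_0^t ds\, e^{-\imath \xi s}\cf_x(\cg^n_s)(\eta)$ appearing in \eqref{cova-luxo}, while the mollifier contributes $\cf\rho_n(-\xi,-\eta)$ to $\cf\Phi^n_{t,y}$ (the sign convention and the complex conjugation in \eqref{frac-noise-covariance} are what produce $\big|\cf\rho_n(-\xi,-\eta)\big|^2$ in $\mu_H^{(n)}$). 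Plugging $\Phi = \Phi^n_{t,y}$ and $\Psi = \Phi^n_{\tti,\yti}$ into \eqref{frac-noise-covariance} and simplifying the phase factors ($e^{\imath \xi(t-\tti)}$ and $e^{\imath \langle \eta, y-\yti\rangle}$ come from the translation parameters $(t,y)$, $(\tti,\yti)$) yields exactly \eqref{cova-luxo}. The first equality in \eqref{cova-luxo}, namely that the covariance with and without the conjugate coincide, follows because $\<Psi>^n(t,y)$ is a \emph{real-valued} random variable (it is a real convolution of the real noise $\dot B$ against real kernels), so $\overline{\<Psi>^n(\tti,\yti)} = \<Psi>^n(\tti,\yti)$ almost surely; alternatively one checks it directly from the symmetry $\mu_H(-d\xi,-d\eta) = \mu_H(d\xi,d\eta)$ of the spectral measure. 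One technical point to verify carefully is that $\Phi^n_{t,y}$ genuinely lies in (or can be approximated within) the class $\cd(\R^{d+1})$ on which the covariance formula \eqref{frac-noise-covariance} is stated, so that the identity is legitimate; this is where the smoothness of $\rho$ and the cutoff $|\eta|\leq n$ in $\cg^n$ enter, and one may need a routine density/limiting argument, or simply note that $\<Psi>^n$ extends to act on such kernels by $L^2$-continuity.

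\textbf{Part (2).} Once \eqref{cova-luxo} is in hand, formula \eqref{cova-cherry} is a purely Gaussian computation. Write $\<Psi2>^n(t,y) = (\<Psi>^n)^2(t,y) - c_n(t,y)$ with $c_n$ deterministic, and similarly at $(\tti,\yti)$. Expanding the product and using linearity of expectation,
\begin{align*}
\mathbb{E}\big[\<Psi2>^n(t,y)\,\overline{\<Psi2>^n(\tti,\yti)}\big]
&= \mathbb{E}\big[(\<Psi>^n)^2(t,y)\,\overline{(\<Psi>^n)^2(\tti,\yti)}\big]
- c_n(t,y)\,\mathbb{E}\big[\overline{(\<Psi>^n)^2(\tti,\yti)}\big]\\
&\quad - \overline{c_n(\tti,\yti)}\,\mathbb{E}\big[(\<Psi>^n)^2(t,y)\big]
+ c_n(t,y)\overline{c_n(\tti,\yti)}.
\end{align*}
For the first term I would invoke Wick's theorem (Isserlis' formula) for the jointly centered Gaussian pair $\big(\<Psi>^n(t,y), \<Psi>^n(\tti,\yti)\big)$: denoting $A = \mathbb{E}[\<Psi>^n(t,y)^2]$, $B = \mathbb{E}[\overline{\<Psi>^n(\tti,\yti)}^2]$ and $C = \mathbb{E}[\<Psi>^n(t,y)\,\overline{\<Psi>^n(\tti,\yti)}]$, one gets $\mathbb{E}\big[(\<Psi>^n)^2(t,y)\,\overline{(\<Psi>^n)^2(\tti,\yti)}\big] = AB + 2C^2$. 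Substituting back and regrouping the $AB$ term with the $c_n$-terms produces precisely the product structure $\{A - c_n(t,y)\}\{B - \overline{c_n(\tti,\yti)}\}$ plus the leftover $2C^2$, which is \eqref{cova-cherry}.

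\textbf{Main obstacle.} There is no deep difficulty here; this lemma is a bookkeeping step. The only genuinely delicate point is the justification that the linear functionals $\<Psi>^n(t,y)$ — which are built by convolving $\dot B$, a random \emph{distribution}, against the kernels $\cg^n$ and $\rho_n$ — are well-defined square-integrable random variables and that \eqref{frac-noise-covariance} may be applied with the resulting (non-compactly-supported but rapidly controlled) test kernels. Handling the time-convolution over $[0,t]$ against $\1_{\R_+}$ requires being slightly careful, since $\1_{\R_+}$ is not smooth; but because $\cg^n$ has a frequency cutoff $|\eta|\leq n$, the kernel $\Phi^n_{t,y}$ is smooth in space and of bounded variation in time, which is more than enough to make sense of $\dot B(\Phi^n_{t,y})$ by density in the Gaussian Hilbert space generated by $\dot B$, and the covariance identity extends by continuity. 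I would dispatch this in a sentence and then carry out the two computations above.
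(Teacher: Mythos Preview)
Your proposal is correct and follows essentially the same route as the paper. The only organizational difference is that the paper first computes the covariance of the mollified noise $\dot{B}^n$ explicitly (obtaining $\mathbb{E}[\dot{B}^n(s,z)\dot{B}^n(\sti,\zti)]=\int \mu_H^{(n)}(d\xi,d\eta)\,e^{\imath\xi(s-\sti)}e^{\imath\langle\eta,z-\zti\rangle}$) and then convolves with $\cg^n$ via Fubini, whereas you merge both convolutions into a single test kernel $\Phi^n_{t,y}$ and apply \eqref{frac-noise-covariance} once; the underlying Fourier computation is identical, and Part~(2) via Wick's formula is exactly what the paper does.
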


\

\begin{proof}
By applying the covariance formula \eqref{frac-noise-covariance}, we get first that for all $s,\sti\geq 0$, $z,\zti\in \R^d$,
\begin{align*}
\mathbb{E}\Big[ \dot{B}^n(s,z)\dot{B}^n(\sti,\zti)\Big]&=\mathbb{E}\Big[ \langle\dot{B},\rho_n(s-.,z-.)\rangle \langle \dot{B},\rho_n(\sti-.,\zti-.)\rangle\Big]\\
&=\int_{\R\times \R^d} \mu_H(d\xi,d\eta)\, \big|\cf \rho_n(\xi,\eta)\big|^2 e^{-\imath \xi s}e^{\imath \xi \sti} e^{-\imath \langle \eta,z\rangle}e^{\imath \langle \eta,\zti\rangle}\\
&=\int_{\R\times \R^d} \mu^{(n)}_H\!(d\xi,d\eta)\,  e^{\imath \xi s}e^{-\imath \xi \sti} e^{\imath \langle \eta,z\rangle}e^{-\imath \langle \eta,\zti\rangle}\, .
\end{align*}
Therefore,
\begin{align*}
&\mathbb{E}\Big[ \<Psi>^n(t,y)\overline{\<Psi>^n(\tti,\yti)}\Big]=\mathbb{E}\Big[ \<Psi>^n(t,y) \<Psi>^n(\tti,\yti) \Big]\\
&=\int_0^t ds\int_0^{\tti} d\sti \int_{(\R^d)^2} dz d\zti\ \cg^n_{t-s}(y-z)\cg_{\tti-\sti}(\yti-\zti)\mathbb{E}\Big[ \dot{B}^n(s,z)\overline{\dot{B}^n(\sti,\zti)}\Big]\\
&=\int_0^t ds\int_0^{\tti} d\sti \int_{(\R^d)^2} dz d\zti\ \cg^n_{t-s}(y-z)\cg^n_{\tti-\sti}(\yti-\zti)\int_{\R\times \R^d} \mu_{H}^{(n)}\!(d\xi,d\eta)\, e^{\imath \xi s}e^{-\imath \xi \sti} e^{\imath \langle \eta,z\rangle}e^{\imath \langle \eta,\zti\rangle}\\
&=\int_{\R\times \R^d} \mu_{H}^{(n)}\!(d\xi,d\eta)\int_0^t ds\int_0^{\tti} d\sti \int_{(\R^d)^2} dz d\zti\ \cg^n_{t-s}(y-z)\cg^n_{\tti-\sti}(\yti-\zti) e^{\imath \xi s}e^{-\imath \xi \sti} e^{\imath \langle \eta,z\rangle}e^{-\imath \langle \eta,\zti\rangle}\\
&=\int_{\R\times \R^d} \mu_{H}^{(n)}\!(d\xi,d\eta) \, e^{\imath \xi(t-\tti)} e^{\imath \langle \eta,y-\yti\rangle}\int_0^t ds \, e^{-\imath \xi s} \cf_x(\cg^n_s)(\eta)\int_0^{\tti} d\sti \, e^{\imath \xi \sti} \overline{\cf_x(\cg^n_{\sti})}(\eta)\, ,
\end{align*}
which corresponds to the desired formula \eqref{cova-luxo}.

\smallskip

As for \eqref{cova-cherry}, it is a mere consequence of the Wick formula, which allows us to write
$$\mathbb{E}\Big[ \<Psi>^n(t,y)^2 \overline{\<Psi>^n(\tti,\yti)}^2 \Big]=2\, \mathbb{E}\Big[ \<Psi>^n(t,y) \overline{\<Psi>^n(\tti,\yti)} \Big]^2+\mathbb{E}\Big[ \<Psi>^n(t,y)^2\Big] \mathbb{E}\Big[\overline{\<Psi>^n(\tti,\yti)}^2 \Big]\, .$$
\end{proof}

\

\section{Above the threshold: proof of Theorem \ref{theo:threshold}, part $(i)$}\label{sec:item-i}

This section is devoted to the construction of the central element $\<IPsi2>$ in the two situations described in item $(i)$ of Theorem \ref{theo:threshold}, that is $H_0+H_+>d-\frac12$ (no renormalization) and $\frac{3d}{4}-\frac12 <H_0+H_+\leq d-\frac12$ (standard Wick renormalization).

\

To be more specific, in both cases, we will concentrate on the exhibition of a uniform bound (over $n$) for the second moment of the approximation $\<IPsi2>^n_t$, for fixed $t>0$ and relatively to a suitable topology in space - see the below statements of Propositions \ref{prop:moment-cherry-regu} and \ref{prop:moment-cherry}.

\

The transition from these uniform bounds to the assertions $(i)$-$(a)$ and $(i)$-$(b)$ of Theorem \ref{theo:threshold} is then a matter of standard arguments, that have been detailled many times in the literature. Such a procedure can be found for instance in \cite[Section 2.1]{De1} or \cite[Section 3]{De2} (see also \cite[Proposition 3.6]{mourrat-weber-xu} or \cite[Lemma 2.6]{oh-okamoto} for results in the same spirit). We leave it to the reader to check that our setting in this regard is not different from the one in the latter references. 

\

Let us finally report on a useful estimate related to the Fourier transform of $\cg^n$. This assertion is borrowed from \cite[Corollary 2.2]{De1}, and it will serve us several times in the sequel.
\begin{lemma}\label{lem:recov-h-0}
Fix $H_0\in (0,1)$ and $T>0$. Then for all $0\leq s\leq T$, $\eta\in \R^d$, $\ka \in \big[0,\min\big(H_0, \frac{1}{2}\big)\big)$ and $\varepsilon \in (0,\frac12-\ka)$, it holds that
$$\sup_{n\geq 1}\int_{\R}\frac{d\xi}{|\xi|^{2H_0-1}}\bigg|\int_0^s dr \, e^{-\imath \xi r} \cf_x(\cg^n_r)(\eta)\bigg|^2 \lesssim \frac{s^{2\ka}}{1+|\eta|^{1+2H_0-2\ka-2\varepsilon}} \ ,$$
where the proportional constant only depends on $T$. 
\end{lemma}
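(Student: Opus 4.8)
The plan is to argue by explicit computation. Recalling \eqref{defi-cg-n}, for $r\geq 0$ one has $\cf_x(\cg^n_r)(\eta)=(2\pi)^d\,\1_{\{|\eta|\leq n\}}\,\frac{\sin(r|\eta|)}{|\eta|}$, so the only $n$-dependence in $\int_0^s dr\,e^{-\imath\xi r}\cf_x(\cg^n_r)(\eta)$ is the cut-off $\1_{\{|\eta|\leq n\}}$; taking $\sup_{n\geq1}$ therefore simply removes it. Writing $\varrho:=|\eta|$ (this $\varrho$ has nothing to do with the mollifier $\rho$) and
\[
G(\xi):=\int_0^s dr\,e^{-\imath\xi r}\,\frac{\sin(r\varrho)}{\varrho}
\]
(with $s$ and $\varrho$ regarded as fixed parameters), the quantity to estimate equals, up to the harmless constant $(2\pi)^{2d}$, the integral $\int_{\R}\frac{d\xi}{|\xi|^{2H_0-1}}\,|G(\xi)|^2$, which I claim is $\lesssim s^{2\kappa}\big(1+\varrho^{\,1+2H_0-2\kappa-2\varepsilon}\big)^{-1}$ with a constant depending only on $T$ (and on the fixed index $H_0$).

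I would first record two complementary pointwise bounds on $G$. A \emph{crude, $\varrho$-free} bound: since $|\sin(r\varrho)/\varrho|\leq r$ one gets $|G(\xi)|\leq s^2/2$, while an integration by parts (using that $\sin(r\varrho)/\varrho$ vanishes at $r=0$, is $\leq s$ at $r=s$, and has derivative $\cos(r\varrho)$ bounded by $1$) yields $|G(\xi)|\leq 2s/|\xi|$; hence $|G(\xi)|\leq\min\!\big(\tfrac{s^2}{2},\tfrac{2s}{|\xi|}\big)$. A \emph{resonance-sensitive} bound: computing the integral explicitly gives
\[
G(\xi)=-\frac{1}{2\varrho}\Big[\frac{e^{\imath s(\varrho-\xi)}-1}{\varrho-\xi}+\frac{e^{-\imath s(\varrho+\xi)}-1}{\varrho+\xi}\Big],
\]
so that, using $|e^{\imath\theta}-1|\leq\min(2,|\theta|)$, $|G(\xi)|\leq\frac{1}{2\varrho}\big[\min(s,\tfrac{2}{|\varrho-\xi|})+\min(s,\tfrac{2}{|\varrho+\xi|})\big]$.

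The estimate then follows by distinguishing $\varrho\leq 1$ from $\varrho>1$. When $\varrho\leq 1$ — where the target is of order $s^{2\kappa}$, since $1+2H_0-2\kappa-2\varepsilon>0$ — the crude bound already suffices: $\int_{\R}\frac{d\xi}{|\xi|^{2H_0-1}}\min(\tfrac{s^2}{2},\tfrac{2s}{|\xi|})^2$ is an elementary scalar integral, convergent at $\xi=0$ and at $\xi=\infty$ precisely because $0<H_0<1$, and of order $s^{2+2H_0}$, which is $\lesssim s^{2\kappa}$ since $2+2H_0>2\kappa$ and $s\leq T$. When $\varrho>1$ I would use the resonance-sensitive bound: discarding the prefactor $1/(2\varrho^2)$ and exploiting the evenness of $|\xi|^{1-2H_0}$ under $\xi\mapsto-\xi$, matters reduce to controlling $\varrho^{-2}\int_{\R}\frac{d\xi}{|\xi|^{2H_0-1}}\min(s,\tfrac{2}{|\varrho-\xi|})^2$. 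Substituting $\xi=\varrho+u$ and splitting the $u$-integral into $\{|u|\leq 2/s\}$ (on which $\min=s$) and $\{|u|>2/s\}$ (on which $\min=2/|u|$), then each of these further according to the size of $|u|$ relative to $\varrho$ — so as to resolve the behaviour of the weight $|\varrho+u|^{1-2H_0}$ near its zero $u=-\varrho$ and for $|u|\gg\varrho$ — leaves a handful of explicit one-variable integrals. Collecting them yields a bound of order $s\varrho^{-1-2H_0}+\varrho^{-2-2H_0}$ when $s\varrho\gtrsim 1$, and of order $s^{2H_0}\varrho^{-2}$ when $s\varrho\lesssim 1$; in either case one checks it is $\lesssim s^{2\kappa}\varrho^{-(1+2H_0-2\kappa-2\varepsilon)}$, using $\varrho>1$, $s\leq T$ (to absorb powers such as $s^{1-2\kappa}\leq T^{1-2\kappa}$), the admissibility of the intermediate exponents granted by $\kappa<\tfrac12$ and $\varepsilon<\tfrac12-\kappa$, and — crucially in the regime $s\varrho\lesssim 1$ — the hypothesis $\kappa<H_0$, which turns $s\lesssim\varrho^{-1}$ into $s^{2H_0-2\kappa}\lesssim\varrho^{2\kappa-2H_0}$.

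The bulk of the work, and the main obstacle, is the bookkeeping for $\varrho>1$: the region decomposition must be performed carefully enough that each piece carries the right powers of $s$ and $\varrho$ and fits under the target with a constant depending only on $T$ (the index $H_0$ being fixed throughout). Conceptually, the key point is that the crude $\varrho$-free bound on $G$ — all one needs at low frequencies — is too lossy at high frequencies, since it cannot beat the $1/\varrho^2$ coming from the factor $1/|\eta|$ in the wave kernel; it is the resonant form of $G$, which detects the near-cancellations at $\xi=\pm\varrho$, that produces the required decay in $\varrho$.
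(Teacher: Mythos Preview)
Your argument is correct. The key observation that $\cf_x(\cg^n_r)(\eta)$ depends on $n$ only through the cut-off $\1_{\{|\eta|\leq n\}}$ is right, and both the crude bound (for $|\eta|\leq 1$) and the resonance-sensitive explicit formula for $G$ (for $|\eta|>1$) are accurate. The case analysis you outline does go through; in particular, the potentially worrying term $\varrho^{-2-2H_0}$ in the regime $s\varrho\gtrsim 1$ is indeed controlled by $s^{2\kappa}\varrho^{-(1+2H_0-2\kappa-2\varepsilon)}$ once one uses $s\gtrsim \varrho^{-1}$ to bound $s^{-2\kappa}\lesssim \varrho^{2\kappa}$, and the regime $s\varrho\lesssim 1$ is exactly where the hypothesis $\kappa<H_0$ enters, as you note. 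One minor wording quibble: where you write ``discarding the prefactor $1/(2\varrho^2)$'' you presumably mean \emph{extracting} the factor $(2\varrho)^{-2}$ and then applying $(a+b)^2\leq 2(a^2+b^2)$.

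As for comparison with the paper: there is nothing to compare. The paper does not prove this lemma at all; it is quoted verbatim from \cite[Corollary 2.2]{De1} and used as a black box. Your write-up therefore supplies a self-contained proof that the paper omits. If you want to polish it, the only real work is the bookkeeping you flag in the last paragraph --- the region decomposition of the $u$-integral for $\varrho>1$ into $\{|u|\leq \varrho/2\}$, $\{\varrho/2<|u|<2\varrho\}$, $\{|u|>2\varrho\}$, crossed with $\{|u|\lessgtr 2/s\}$ --- which is routine but should be written out so that each piece visibly carries the claimed powers of $s$ and $\varrho$.
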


\

\subsection{Situation $(i)\text{-}(a)$}

\begin{proposition}\label{prop:moment-cherry-regu}
In the setting of Theorem \ref{theo:threshold}, assume that $H_0+H_+>d-\frac12$ and set $c_n(t,x):=0$ in the definitions introduced in \eqref{defi:b-do-n}-\eqref{ref-c-n-theo}.

\smallskip

Then for every $0<\ga <H_0+H_+-(d-\frac12)$, there exists $\ka>0$ such that for all $T>0$ and $t\in [0,T]$,
\begin{equation}\label{esti-mom-regul}
\sup_{n\geq 1}\mathbb{E}\Big[ \big\|  \<IPsi2>^n_t \big\|_{\ch^{1+\ga}_w(\R^d)}^2\Big] \lesssim \|w\|_{L^1(\R^d)} t^{2+4\ka} \, ,
\end{equation}
where the proportional constant only depends on $T$.
\end{proposition}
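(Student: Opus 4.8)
\emph{Overall strategy and first reduction.} The plan is to deduce the uniform bound \eqref{esti-mom-regul} from a pointwise-in-$x$ second-moment estimate, exploiting the spatial stationarity of all the fields at play, and then to unfold the covariance formulae of Lemma~\ref{lem:cova} and control the resulting integrals with Lemma~\ref{lem:recov-h-0}. Since the spectral density $\mu_H$ depends only on the Fourier variables, the space-time field $\dot{B}^n$ is stationary in the space variable, and hence so are $(\<Psi>^n)^2$ and — after convolution with the deterministic kernel $\cg^n$, and after applying the spatial Fourier multiplier $\{1+|\cdot|^2\}^{\frac{1+\ga}{2}}$ — the field $x\mapsto\<IPsi2>^n(t,x)$ for each fixed $t$. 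Thus $\mathbb{E}\big[\big|\cf^{-1}\big(\{1+|\cdot|^2\}^{\frac{1+\ga}{2}}\cf_x\<IPsi2>^n_t\big)(x)\big|^2\big]$ is independent of $x$, and integrating against $w$ gives
\begin{equation*}
\mathbb{E}\Big[\big\|\<IPsi2>^n_t\big\|_{\ch^{1+\ga}_w}^2\Big]=\|w\|_{L^1(\R^d)}\int_{\R^d}\{1+|\zeta|^2\}^{1+\ga}\,\nu^n_t(d\zeta),
\end{equation*}
where $\nu^n_t$ denotes the spectral measure of the stationary field $\<IPsi2>^n_t(\cdot)$. It remains to bound $\int\{1+|\zeta|^2\}^{1+\ga}\nu^n_t(d\zeta)$ by $C_T\,t^{2+4\ka}$, uniformly in $n$, for some $\ka>0$.

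\emph{Identification of the spectral measure.} Since $c_n\equiv0$ here, I would split $(\<Psi>^n)^2=\big((\<Psi>^n)^2-q_n\big)+q_n$, with $q_n(s):=\mathbb{E}[(\<Psi>^n(s,x))^2]$ (independent of $x$), so that $\<IPsi2>^n=\cg^n\ast[\1_{\R_+}((\<Psi>^n)^2-q_n)]+\cg^n\ast[\1_{\R_+}q_n]$, the last term being a deterministic field, constant in $x$, equal to some $\bar m_n(t)$ with $|\bar m_n(t)|\lesssim\int_0^t(t-s)q_n(s)\,ds$. Starting from $\<IPsi2>^n(t,y)=\int_0^t\int_{\R^d}\cg^n_{t-s}(y-z)\<Psi2>^n(s,z)\,dz\,ds$ and plugging in \eqref{cova-cherry}--\eqref{cova-luxo}, integrating out $z,\zti$ (which produces factors $\cf_x(\cg^n_{t-s})(\eta_1+\eta_2)$ and its conjugate, and confirms spatial stationarity), one gets $\nu^n_t=|\bar m_n(t)|^2\,\delta_0+\widetilde\nu^n_t$, where $\widetilde\nu^n_t$ is the image under $(\eta_1,\eta_2)\mapsto\eta_1+\eta_2$ of
\begin{equation*}
2\,\mu_H^{(n)}(d\xi_1,d\eta_1)\mu_H^{(n)}(d\xi_2,d\eta_2)\Big|\int_0^t ds\,e^{\imath(\xi_1+\xi_2)s}\cf_x(\cg^n_{t-s})(\eta_1+\eta_2)\,J^n_s(\xi_1,\eta_1)\,J^n_s(\xi_2,\eta_2)\Big|^2,\quad J^n_s(\xi,\eta):=\int_0^s dr\,e^{-\imath\xi r}\cf_x(\cg^n_r)(\eta).
\end{equation*}
The $\delta_0$-part contributes only $|\bar m_n(t)|^2$ to the weighted integral.

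\emph{The two estimates.} For the $\delta_0$-part, I would bound $q_n(s)=\int\mu_H^{(n)}(d\xi,d\eta)|J^n_s(\xi,\eta)|^2\le\int\mu_H(d\xi,d\eta)|J^n_s(\xi,\eta)|^2\lesssim s^{2\ka}$ via Lemma~\ref{lem:recov-h-0}, the last step needing $\int_{\R^d}\prod_i|\eta_i|^{1-2H_i}\,(1+|\eta|^{1+2H_0-2\ka-2\ep})^{-1}\,d\eta<\infty$, which holds for small $\ka,\ep>0$ because $H_0+H_+>d-\frac12$ (and $H_i<1$); hence $|\bar m_n(t)|^2\lesssim_T t^{2+4\ka}$. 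For $\widetilde\nu^n_t$, I would apply Cauchy--Schwarz in $s$ to the inner integral, use $|\cf_x(\cg^n_r)(\zeta)|^2\lesssim\min(r^2,|\zeta|^{-2})\lesssim(1+r^2)(1+|\zeta|^2)^{-1}$ together with $\{1+|\eta_1+\eta_2|^2\}^\ga\lesssim\{1+|\eta_1|^2\}^\ga\{1+|\eta_2|^2\}^\ga$, and bound $|\cf\rho_n|^2\le1$; the integral then factorizes,
\begin{equation*}
\int\{1+|\zeta|^2\}^{1+\ga}\,\widetilde\nu^n_t(d\zeta)\ \lesssim\ t\int_0^t ds\,(1+(t-s)^2)\Big(\int\mu_H(d\xi,d\eta)\{1+|\eta|^2\}^\ga|J^n_s(\xi,\eta)|^2\Big)^2.
\end{equation*}
By Lemma~\ref{lem:recov-h-0} the inner $\xi$-integral is $\lesssim s^{2\ka}(1+|\eta|^{1+2H_0-2\ka-2\ep})^{-1}$, so the $\eta$-integral is $\lesssim s^{2\ka}\int_{\R^d}\prod_i|\eta_i|^{1-2H_i}\{1+|\eta|^2\}^\ga(1+|\eta|^{1+2H_0-2\ka-2\ep})^{-1}\,d\eta$, which is finite as soon as $H_0+H_+>d-\frac12+\ga+\ka+\ep$; since $\ga<H_0+H_+-(d-\frac12)$ by hypothesis, one fixes $\ka,\ep>0$ small enough. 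Integrating back in $s$ yields $\int\{1+|\zeta|^2\}^{1+\ga}\widetilde\nu^n_t(d\zeta)\lesssim_T t^{2+4\ka}$, which together with the reduction step gives \eqref{esti-mom-regul}.

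\emph{Main obstacle.} The delicate point is the balance in the last step: the kernel factor $\cf_x(\cg^n_{t-s})(\eta_1+\eta_2)$ contributes exactly one spatial derivative of decay, which, weighed against $\{1+|\eta_1+\eta_2|^2\}^{1+\ga}$, leaves only $\ga$ orders of regularity to absorb; Lemma~\ref{lem:recov-h-0}, fed by $H_0+H_+>d-\frac12$, supplies precisely $\ga$ (plus an arbitrarily small surplus $\ka+\ep$). This is the origin of the admissible range $0<\ga<H_0+H_+-(d-\frac12)$ and of the need to keep $\ka,\ep$ infinitesimal; it also makes transparent that $\<IPsi2>$ gains exactly one derivative over $\<Psi2>$. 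The remaining ingredients — the stationarity reduction, the Wick expansion, the elementary kernel estimate, and the passage from these uniform second-moment bounds to the almost-sure convergence in $\cac(\R_+;\ch^{1+\ga}_w)$ — are routine and parallel computations already carried out in \cite{De1,De2}.
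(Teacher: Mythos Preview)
Your proposal is correct and follows essentially the same route as the paper's proof: the decomposition $\nu^n_t=|\bar m_n(t)|^2\delta_0+\widetilde\nu^n_t$ is precisely the paper's splitting $I^n_t+2\,I\!I^n_t$ (obtained there by direct Fourier expansion and the formal identity $\int_{\R^d}e^{-\imath\langle\la,y\rangle}dy=\delta_{\{\la=0\}}$), and both terms are controlled via Lemma~\ref{lem:recov-h-0}, the kernel bound $|\cf_x(\cg^n_{t-s})(\zeta)|\lesssim(1+T)(1+|\zeta|)^{-1}$, and the same finiteness condition $H_0+H_+>d-\frac12+\ga+\ka+\ep$. The only cosmetic difference is that you phrase the reduction via spatial stationarity and spectral measures, whereas the paper carries out the $(\la,\lati)$-integration explicitly; the resulting estimates are line-for-line equivalent.
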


\begin{proof}
Fix $0<\ga <H_0+H_+-(d-\frac12)$. The moment under consideration can be readily expanded as
\begin{align}
&\mathbb{E}\Big[ \big\|  \<IPsi2>^n_t \big\|_{\ch^{1+\ga}_w(\R^d)}^2\Big]\nonumber\\
&=\int_{\R^d} dx \, w(x) \, \mathbb{E}\bigg[\bigg|\cf^{-1}\Big( \{1+|.|^2\}^{\frac{1+\ga}{2}}\int_0^t ds\, \cf_x \big(\cg^n_{t-s}\big)\cf_x \big(\<Psi2>^n_s\big)\Big)(x)\bigg|^2\bigg]\nonumber\\
&=\int_{\R^d} dx \, w(x) \, \mathbb{E}\bigg[\bigg|\int_{\R^d}d\la \, e^{\imath \langle x,\la\rangle} \{1+|\la|^2\}^{\frac12+\ga}\int_0^t ds\, \cf_x \big(\cg^n_{t-s}\big)(\la)\int_{\R^d} dy \, e^{-\imath \langle \la,y\rangle}\<Psi2>^n_s(y)\bigg|^2\bigg]\nonumber\\
&=\int_{\R^d} dx \, w(x) \int_{(\R^d)^2}d\la d\lati \, e^{\imath \langle x,\la-\lati\rangle} \{1+|\la|^2\}^{\frac{1+\ga}{2}}\{1+|\lati|^2\}^{\frac{1+\ga}{2}}\int_0^t ds\int_0^t d\sti\, \cf_x \big(\cg^n_{t-s}\big)(\la)\cf_x \big(\cg^n_{t-\sti}\big)(\lati)\nonumber\\
&\hspace{8cm}\int_{(\R^d)^2} dyd\yti  \, e^{-\imath \langle \la,y\rangle}e^{\imath \langle \lati,\yti\rangle}\mathbb{E}\Big[\<Psi2>^n_s(y) \overline{\<Psi2>^n_{\sti}(\yti)}\Big]\, .\label{expans-mom}
\end{align}
Now let us recall formula \eqref{cova-cherry}, which in this case becomes
\begin{align*}
\mathbb{E}\Big[ \<Psi2>^n(s,y)\overline{\<Psi2>^n(\sti,\yti)}\Big]=\mathbb{E}\Big[ \<Psi>^n(s,y)^2\Big]\mathbb{E}\Big[ \overline{\<Psi>^n(\sti,\yti)}^2\Big]+2\Big(\mathbb{E}\Big[ \<Psi>^n(s,y) \overline{\<Psi>^n(\sti,\yti)} \Big]\Big)^2\, .
\end{align*}
Injecting this decomposition into \eqref{expans-mom}, we deduce that
\begin{align}\label{decompo-mom-regu}
&\mathbb{E}\Big[ \big\|  \<IPsi2>^n_t \big\|_{\ch^{1+\ga}_w(\R^d)}^2\Big]=I^n_t+2 \, I\! I^n_t,
\end{align}
with
\begin{align*}
&I^n_t:=\int_{\R^d} dx \, w(x) \int_{(\R^d)^2}d\la d\lati \, e^{\imath \langle x,\la-\lati\rangle} \{1+|\la|^2\}^{\frac{1+\ga}{2}}\{1+|\lati|^2\}^{\frac{1+\ga}{2}}\\
&\hspace{0.5cm}\int_0^t ds\int_0^t d\sti\, \cf_x \big(\cg^n_{t-s}\big)(\la)\cf_x \big(\cg^n_{t-\sti}\big)(\lati)\int_{(\R^d)^2} dyd\yti  \, e^{-\imath \langle \la,y\rangle}e^{\imath \langle \lati,\yti\rangle}\mathbb{E}\Big[ \<Psi>^n(s,y)^2\Big]\mathbb{E}\Big[ \overline{\<Psi>^n(\sti,\yti)}^2\Big]\\
\end{align*}
and
\begin{align}
&I\! I^n_t:=\int_{\R^d} dx \, w(x) \int_{(\R^d)^2}d\la d\lati \, e^{\imath \langle x,\la-\lati\rangle} \{1+|\la|^2\}^{\frac{1+\ga}{2}}\{1+|\lati|^2\}^{\frac{1+\ga}{2}}\nonumber\\
&\hspace{1cm}\int_0^t ds\int_0^t d\sti\, \cf_x \big(\cg^n_{t-s}\big)(\la)\cf_x \big(\cg^n_{t-\sti}\big)(\lati)\int_{(\R^d)^2} dyd\yti  \, e^{-\imath \langle \la,y\rangle}e^{\imath \langle \lati,\yti\rangle}\Big(\mathbb{E}\Big[\<Psi>^n_s(y) \overline{\<Psi>^n_{\sti}(\yti)}\Big]\Big)^2\, .\label{defi-ii-n-t}
\end{align}

\

\noindent
\textit{Step 1: Estimation of $I^n_t$.}

\smallskip

Observe that $I^n_t$ can actually be recast as
\begin{align*}
I^n_t&=\int_{\R^d} dx \, w(x)\bigg| \int_{\R^d}d\la \, e^{\imath \langle x,\la\rangle} \{1+|\la|^2\}^{\frac{1+\ga}{2}}\int_0^t ds\, \cf_x \big(\cg^n_{t-s}\big)(\la)\int_{\R^d} dy  \, e^{-\imath \langle \la,y\rangle}\mathbb{E}\Big[ \<Psi>^n(s,y)^2\Big]\bigg|^2
\end{align*}
By applying the covariance formula \eqref{cova-luxo}, we immediately obtain the expression
\begin{align*}
&\mathbb{E}\Big[ \<Psi>^n(s,y)^2  \Big]=\int_{\R\times \R^d} \mu_{H}^{(n)}\!(d\xi,d\eta) \bigg| \int_0^s dr \, e^{-\imath \xi r} \cf_x(\cg^n_r)(\eta)\bigg|^2.
\end{align*}
Note that the latter quantity does not depend on the space variable $y$, and so, using the (formal) identity 
$$\int_{\R^d} dy  \, e^{-\imath \langle \la,y\rangle} =\delta_{\{\la=0\}},$$
where $\delta$ stands for the Dirac distribution, we get that
\begin{align*}
&\int_{\R^d}d\la \, e^{\imath \langle x,\la\rangle} \{1+|\la|^2\}^{\frac{1+\ga}{2}}\int_0^t ds\, \cf_x \big(\cg^n_{t-s}\big)(\la)\int_{\R^d} dy  \, e^{-\imath \langle \la,y\rangle}\mathbb{E}\Big[ \<Psi>^n(s,y)^2\Big]\\
&=\int_{\R\times \R^d} \mu_{H}^{(n)}\!(d\xi,d\eta)\int_0^t ds\, (t-s)\bigg| \int_0^s dr \, e^{-\imath \xi r} \cf_x(\cg^n_r)(\eta)\bigg|^2,
\end{align*}
which enables us to write $I^n_t$ as
\begin{align*}
I^n_t&=\|w\|_{L^1(\R^d)}\bigg| \int_{\R\times \R^d} \mu_{H}^{(n)}\!(d\xi,d\eta)\int_0^t ds\, (t-s)\bigg|\int_0^s dr \, e^{-\imath \xi r} \cf_x(\cg^n_r)(\eta)\bigg|^2\bigg|^2.
\end{align*}

\smallskip

At this point, observe that $\cf \rho_n(-\xi,-\eta)=\cf\rho(-2^{-n} \xi,-2^{-n}\eta)$, and thus, since $\rho$ is a mollifier, one has $\|\cf \rho_n\|_{L^\infty}\leq \|\cf \rho\|_{L^\infty}\leq \|\rho\|_{L^1}=1$, which immediately entails
\begin{equation}\label{bound-mu-n}
\mu_{H}^{(n)}\!(d\xi,d\eta)\leq \mu_{H}(d\xi,d\eta).
\end{equation}
Consequently,
\begin{align*}
&\bigg| \int_{\R\times \R^d} \mu_{H}^{(n)}\!(d\xi,d\eta)\int_0^t ds\, (t-s)\bigg|\int_0^s dr \, e^{-\imath \xi r} \cf_x(\cg^n_r)(\eta)\bigg|^2\bigg|\\
&\leq \int_{\R^d} \bigg(\prod_{i=1,\ldots,d} \frac{d\eta_i}{|\eta_i|^{2H_i-1}}\bigg)\int_0^t ds \, (t-s) \int_{\R}\frac{d\xi}{|\xi|^{2H_0-1}} \bigg| \int_0^s dr \, e^{-\imath \xi r} \cf_x(\cg^n_r)(\eta)\bigg|^2 ,
\end{align*}
and we can now use the estimate of Lemma \ref{lem:recov-h-0} to derive that for all $\ka,\varepsilon>0$ small enough,
\begin{align*}
I^n_t&\lesssim \|w\|_{L^1(\R^d)}\bigg|\int_{\R^d} \bigg(\prod_{i=1,\ldots,d} \frac{d\eta_i}{|\eta_i|^{2H_i-1}}\bigg)\frac{1}{1+|\eta|^{1+2H_0-2\ka-2\varepsilon}}\int_0^t ds \, (t-s)s^{2\ka}\bigg|^2\\
&\lesssim \|w\|_{L^1(\R^d)}t^{4+4\ka}\bigg|\int_{\R^d} \bigg(\prod_{i=1,\ldots,d} \frac{d\eta_i}{|\eta_i|^{2H_i-1}}\bigg)\frac{1}{1+|\eta|^{1+2H_0-2\ka-2\varepsilon}}\bigg|^2.
\end{align*}
Since $2H_i-1<1$, we are here allowed to implement a spherical change of coordinates and assert that for some finite proportional constant,
\begin{align*}
I^n_t
&\lesssim  \|w\|_{L^1(\R^d)}t^{4+4\ka}\bigg|\int_0^\infty \frac{dr }{r^{1-2(d-H_+)}\{1+r^{1+2H_0-2\ka-2\varepsilon}\}}\bigg|^2.
\end{align*}
Thanks to the assumption $H_0+H_+>d-\frac12$, it is easy to check that the integral in the latter bound is finite for all $\ka,\varepsilon>0$ small enough, and thus we have shown that
\begin{equation}\label{unif-bou-i-n-t}
\sup_{n\geq 1}\, I^n_t\lesssim \|w\|_{L^1(\R^d)} t^{4+4\ka}\, ,
\end{equation}
where the proportional constant only depends on $T$.

\

\noindent
\textit{Step 2: Estimation of $I\! I^n_t$.}

\smallskip

By using the covariance formula \eqref{cova-luxo}, we can write
\begin{align*}
&\Big(\mathbb{E}\Big[\<Psi>^n_s(y) \overline{\<Psi>^n_{\sti}(\yti)}\Big]\Big)^2\\
&=\bigg(\int_{\R\times \R^d} \mu_{H}^{(n)}\!(d\xi,d\eta) \, e^{\imath \xi(s-\sti)} e^{\imath \langle \eta,y-\yti\rangle}\int_0^s dr \, e^{-\imath \xi r} \cf_x(\cg^n_r)(\eta)\int_0^{\sti} d\rti \, e^{\imath \xi \rti} \overline{\cf_x(\cg^n_{\rti})}(\eta)\bigg)^2\\
&=\int_{\R\times \R^d} \mu_{H}^{(n)}\!(d\xi,d\eta)\int_{\R\times \R^d} \mu_{H}^{(n)}\!(d\xiti,d\etati) \, e^{\imath \xi(s-\sti)} e^{\imath \xiti(s-\sti)}e^{\imath \langle \eta,y-\yti\rangle}e^{\imath \langle \etati,y-\yti\rangle}\\
&\hspace{1cm}\int_0^s dr \, e^{-\imath \xi r} \cf_x(\cg^n_r)(\eta)\int_0^{\sti} d\rti \, e^{\imath \xi \rti} \overline{\cf_x(\cg^n_{\rti})}(\eta) \int_0^s dv \, e^{-\imath \xiti v} \cf_x(\cg^n_v)(\etati)\int_0^{\sti} d\vti \, e^{\imath \xiti \vti} \overline{\cf_x(\cg^n_{\vti})}(\etati).
\end{align*}
Injecting this expression into the definition of $I\! I^n_t$ and then using the (formal) identity
$$\int_{(\R^d)^2} dyd\yti  \, e^{-\imath \langle \la,y\rangle}e^{\imath \langle \lati,\yti\rangle}e^{\imath \langle \eta+\etati,y\rangle}e^{-\imath \langle \eta+\etati,\yti\rangle}=\delta_{\{\la=\eta+\etati\}}\delta_{\{\lati=\eta+\etati\}},$$
we get this time
\begin{align}
&I\! I^n_t=\|w\|_{L^1(\R^d)}\int_{\R\times \R^d} \mu_{H}^{(n)}\!(d\xi,d\eta)\int_{\R\times \R^d} \mu_{H}^{(n)}\!(d\xiti,d\etati) \, \{1+|\eta+\etati|^2\}^{1+\ga}\nonumber\\
&\hspace{1cm}\bigg|\int_0^t ds\, \cf_x \big(\cg^n_{t-s}\big)(\eta+\etati)e^{\imath s(\xi+\xiti)} \int_0^s dr \, e^{-\imath \xi r} \cf_x(\cg^n_r)(\eta) \int_0^s dv \, e^{-\imath \xiti v} \cf_x(\cg^n_v)(\etati)\bigg|^2.\label{expre-ii-n-t}
\end{align}
Let us recall the uniform bound \eqref{bound-mu-n}. Besides, given the definition \eqref{defi-cg-n} of $\cg^n$, it is clear that
\begin{equation}\label{estim-bas-four-g-n}
\big|\cf_x \big(\cg^n_{t-s}\big)(\eta+\etati)\big| \lesssim \frac{1+T}{1+|\eta+\etati|}\, ,
\end{equation}
where the proportional constant does not depend on $n$. By injecting these estimates into \eqref{expre-ii-n-t}, we deduce
\begin{align}
I\! I^n_t&\lesssim \|w\|_{L^1(\R^d)}\int_{\R\times \R^d} \mu_{H}(d\xi,d\eta)\int_{\R\times \R^d} \mu_{H}(d\xiti,d\etati) \, \{1+|\eta+\etati|^2\}^{\ga}\nonumber\\
&\hspace{3cm}t\int_0^t ds\, \bigg|\int_0^s dr \, e^{-\imath \xi r} \cf_x(\cg^n_r)(\eta)\bigg|^2\bigg| \int_0^s dv \, e^{-\imath \xiti v} \cf_x(\cg^n_v)(\etati)\bigg|^2\nonumber\\
&\lesssim \|w\|_{L^1(\R^d)}\, t\int_0^t ds\bigg[\int_{\R\times \R^d} \mu_{H}(d\xi,d\eta) \, \{1+|\eta|^2\}^{\ga} \bigg|\int_0^s dr \, e^{-\imath \xi r} \cf_x(\cg^n_r)(\eta)\bigg|^2\bigg]^2,\label{ii-n-t-inter}
\end{align}
where the proportional constant only depends on $T$, and where we have used the trivial inequality $\{1+|\eta+\etati|^2\}^{\ga}\lesssim \{1+|\eta|^2\}^{\ga}\{1+|\etati|^2\}^{\ga}$. Just as for $I^n_t$, we can now rely on the estimate of Lemma \ref{lem:recov-h-0} to assert that for $\ka,\varepsilon>0$ small enough,
\begin{align*}
&\int_{\R\times \R^d} \mu_{H}(d\xi,d\eta) \, \{1+|\eta|^2\}^{\ga} \bigg|\int_0^s dr \, e^{-\imath \xi r} \cf_x(\cg^n_r)(\eta)\bigg|^2\\
&=\int_{\R^d}\bigg(\prod_{i=1,\ldots,d} \frac{d\eta_i}{|\eta_i|^{2H_i-1}}\bigg)\{1+|\eta|^2\}^{\ga}\int_{\R}\frac{d\xi}{|\xi|^{2H_0-1}}\bigg|\int_0^s dr \, e^{-\imath \xi r} \cf_x(\cg^n_r)(\eta)\bigg|^2\\
&\lesssim s^{2\ka}\int_{\R^d}\bigg(\prod_{i=1,\ldots,d} \frac{d\eta_i}{|\eta_i|^{2H_i-1}}\bigg)\frac{1}{1+|\eta|^{1+2H_0-2\ga-2\ka-2\varepsilon}} \\
&\lesssim s^{2\ka}\int_0^\infty \frac{dr }{r^{1-2(d-2H_+)}\{1+r^{1+2H_0-2\ga-2\ka-2\varepsilon}\}}.
\end{align*}
Due to the assumption $0<\ga <H_0+H_+-(d-\frac12)$, we can guarantee that the latter integral is finite for all $\ka,\varepsilon >0$ small enough. Going back to \eqref{ii-n-t-inter}, we have proved that
\begin{equation}\label{unif-bou-ii-n-t}
\sup_{n\geq 1}\, I\! I^n_t\lesssim \|w\|_{L^1(\R^d)} t^{2+4\ka}\, ,
\end{equation}
where the proportional constant only depend on $T$.

\

Injecting \eqref{unif-bou-i-n-t} and \eqref{unif-bou-ii-n-t} into \eqref{decompo-mom-regu} finally yields the desired uniform estimate \eqref{esti-mom-regul}.

\end{proof}


\subsection{Situation $(i)$-$(b)$}

\begin{proposition}\label{prop:moment-cherry}
In the setting of Theorem \ref{theo:threshold}, assume that $\frac{3d}{4}-\frac12< H_0+H_+\leq d-\frac12$ and define the renormalizing sequence $(c_n)$ in \eqref{ref-c-n} as
$c_n(t,x):=\mathbb{E}\big[\<Psi>^n(t,x)^2\big]$.

\smallskip

Then for every $\al>0$ satisfying condition \eqref{condition-alpha}, there exists $\ka>0$ such that for all $T>0$ and $t\in [0,T]$,
\begin{equation}\label{estim-mom-cas-rug}
\sup_{n\geq 1}\mathbb{E}\Big[ \big\|  \<IPsi2>^n_t \big\|_{\ch^{1-2\al}_w(\R^d)}^2\Big] \lesssim \|w\|_{L^1(\R^d)} t^{2\ka+2} \, ,
\end{equation}
where the proportional constant only depends on $T$.
\end{proposition}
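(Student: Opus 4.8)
The plan is to follow the same strategy as in the proof of Proposition~\ref{prop:moment-cherry-regu}, adapting each step to the more singular regime $\frac{3d}{4}-\frac12<H_0+H_+\leq d-\frac12$. The starting point is again the expansion \eqref{expans-mom} of $\mathbb{E}[\|\<IPsi2>^n_t\|_{\ch^{1-2\al}_w}^2]$, together with the covariance formula \eqref{cova-cherry}. The crucial simplification here is that the choice $c_n(t,x):=\mathbb{E}[\<Psi>^n(t,x)^2]$ exactly cancels the deterministic term in \eqref{cova-cherry}, so that
\[
\mathbb{E}\Big[\<Psi2>^n(s,y)\overline{\<Psi2>^n(\sti,\yti)}\Big]=2\Big(\mathbb{E}\Big[\<Psi>^n_s(y)\overline{\<Psi>^n_{\sti}(\yti)}\Big]\Big)^2\, ,
\]
i.e. only the "$I\!I^n_t$-type" contribution survives (there is no analogue of $I^n_t$). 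Thus the task reduces to bounding, uniformly in $n$, the quantity obtained by inserting this into \eqref{expans-mom}, which after the same formal $\delta$-function computation as in \eqref{expre-ii-n-t} takes the form
\begin{align*}
&\|w\|_{L^1(\R^d)}\int_{\R\times\R^d}\mu_H^{(n)}(d\xi,d\eta)\int_{\R\times\R^d}\mu_H^{(n)}(d\xiti,d\etati)\,\{1+|\eta+\etati|^2\}^{1-2\al}\\
&\qquad\times\bigg|\int_0^t ds\,\cf_x(\cg^n_{t-s})(\eta+\etati)e^{\imath s(\xi+\xiti)}\int_0^s dr\, e^{-\imath\xi r}\cf_x(\cg^n_r)(\eta)\int_0^s dv\, e^{-\imath\xiti v}\cf_x(\cg^n_v)(\etati)\bigg|^2\, .
\end{align*}

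Next I would apply the uniform bounds \eqref{bound-mu-n} and \eqref{estim-bas-four-g-n}, then use Cauchy--Schwarz in $s$ (as in the passage from \eqref{expre-ii-n-t} to \eqref{ii-n-t-inter}) to pull the two inner time integrals apart; this leaves an expression dominated by
\[
\|w\|_{L^1(\R^d)}\, t\int_0^t ds\,\bigg[\int_{\R\times\R^d}\mu_H(d\xi,d\eta)\,\{1+|\eta|^2\}^{-2\al}\bigg|\int_0^s dr\, e^{-\imath\xi r}\cf_x(\cg^n_r)(\eta)\bigg|^2\bigg]\bigg[\int_{\R\times\R^d}\mu_H(d\xiti,d\etati)\bigg|\int_0^s dv\, e^{-\imath\xiti v}\cf_x(\cg^n_v)(\etati)\bigg|^2\bigg]\, ,
\]
where I have used $\{1+|\eta+\etati|^2\}^{-2\al}\lesssim\{1+|\eta|^2\}^{-2\al}\{1+|\etati|^2\}^{-2\al}$ when $\al>0$ and I have redistributed the weight $\{1+|\eta+\etati|^{-1}\}^2$ from \eqref{estim-bas-four-g-n} into a factor $\{1+|\eta|^2\}^{-2\al}\{1+|\etati|^2\}^{-2\al}$ by crudely throwing away the second one (keeping one $-2\al$ on each variable) --- the bookkeeping of how to split $|\eta+\etati|^{-2}$ between the two spectral variables is the one place that requires a little care, and one may need to argue by symmetry or split the domain $\{|\eta|\le|\etati|\}$ vs. $\{|\eta|\ge|\etati|\}$. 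Then Lemma~\ref{lem:recov-h-0} applied to each spectral integral yields factors $s^{2\ka}$ times the radial integrals
\[
\int_0^\infty\frac{dr}{r^{1-2(d-H_+)}\{1+r^{1+2H_0-2\ka-2\varepsilon}\}}\quad\text{(no extra $\al$-weight)},\qquad
\int_0^\infty\frac{dr}{r^{1-2(d-H_+)+4\al}\{1+r^{1+2H_0-2\ka-2\varepsilon}\}}\quad\text{(with the $-2\al$ weight)},
\]
after the spherical change of coordinates legitimated by $2H_i-1<1$.

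The main obstacle --- and the reason condition \eqref{condition-alpha} appears --- is the convergence of these two radial integrals. Near $r=0$ they demand $2(d-H_+)>0$ and $2(d-H_+)-4\al<1$, i.e. essentially $\al>\frac{d-1}{4}-\frac{H_+}{2}$, while near $r=\infty$ they demand $4\al+1+2H_0-2\ka-2\varepsilon-1+2(d-H_+)>1$, i.e. $\al>d-\frac12-(H_0+H_+)$ up to the arbitrarily small $\ka,\varepsilon$. Reconciling these two requirements is precisely the content of the hypothesis $\frac{3d}{4}-\frac12<H_0+H_+$ together with the stated range $\al>\max(d-\frac12-(H_0+H_+),\frac{d-1}{4})$: one has to check that this range is non-empty, which is exactly where $H_0+H_+>\frac{3d}{4}-\frac12$ is used. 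Once the radial integrals are shown finite for suitable small $\ka,\varepsilon$, the remaining time integral $\int_0^t s^{4\ka}\,ds\lesssim t^{1+4\ka}$ combines with the prefactor $t$ to give the bound $\lesssim\|w\|_{L^1}t^{2+4\ka}$, which is \eqref{estim-mom-cas-rug} after relabelling $\ka$. I expect the delicate points to be (i) the precise distribution of the $\{1+|\eta+\etati|^2\}$ weights among the two spectral integrals so that both resulting integrals converge simultaneously, and (ii) the careful verification that the constraints imposed by the two endpoints $r\to0$ and $r\to\infty$ are jointly satisfiable exactly under \eqref{condition-alpha}.
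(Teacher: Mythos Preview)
Your initial reduction is right: the Wick choice of $c_n$ kills the $I^n_t$-term, and one is left with exactly the quantity displayed (the analogue of \eqref{expre-ii-n-t} with $1+\gamma$ replaced by $1-2\alpha$). The gap is in the decoupling step that follows.

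First, the inequality you invoke,
\[
\{1+|\eta+\etati|^2\}^{-2\alpha}\ \lesssim\ \{1+|\eta|^2\}^{-2\alpha}\{1+|\etati|^2\}^{-2\alpha}\qquad(\alpha>0),
\]
is simply false: take $\etati=-\eta$ with $|\eta|$ large, so that the left side equals $1$ while the right side is $\sim|\eta|^{-8\alpha}\to 0$. More importantly, even if you try to redistribute the weight asymmetrically (putting all of the $-2\alpha$ gain on one variable, say $\eta$, and nothing on $\etati$), the resulting $\etati$-integral is exactly the quantity
\[
\int_{\R^d}\Big(\prod_i\frac{d\etati_i}{|\etati_i|^{2H_i-1}}\Big)\frac{1}{1+|\etati|^{1+2H_0-2\kappa-2\varepsilon}}\ \sim\ \int_0^\infty\frac{dr}{r^{1-2(d-H_+)}\{1+r^{1+2H_0-2\kappa-2\varepsilon}\}},
\]
whose convergence at infinity requires $H_0+H_+>d-\tfrac12$. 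In the present regime $H_0+H_+\le d-\tfrac12$ this integral \emph{diverges}; it is precisely the divergence of $\mathbb{E}[\<Psi>^n(s,y)^2]$ that forced renormalisation in the first place, and your decoupling reintroduces it. So no splitting of the domain or symmetry argument can rescue this approach: the smallness coming from $\{1+|\eta+\etati|^2\}^{-2\alpha}$ is concentrated on the region where $\eta+\etati$ is large, i.e.\ far from the ``resonant'' set $\etati\approx-\eta$, and must be exploited \emph{jointly}.

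The paper therefore does \emph{not} separate the spectral variables. After using \eqref{bound-mu-n}, \eqref{estim-bas-four-g-n} and Lemma~\ref{lem:recov-h-0} on each of $\xi,\xiti$, one arrives at
\[
\mathbb{E}\Big[\|\<IPsi2>^n_t\|_{\ch^{1-2\alpha}_w}^2\Big]\ \lesssim\ \|w\|_{L^1}\,t^{2+2\kappa}\int_{\R^d\times\R^d}\frac{d\eta\,d\etati}{\{1+|\eta-\etati|^2\}^{2\alpha}}\,K_H(\eta)K_H(\etati),
\]
with $K_H$ as in \eqref{defi:k-h}, and the whole difficulty is pushed into showing that this coupled double integral is finite. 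This is the content of the separate technical Lemma~\ref{main-technical}, whose proof splits $(\R_+)^2$ coordinatewise into the near-diagonal region $\mathcal Q_+^{(0)}=\{\tfrac12\eta_i\le\etati_i\le\tfrac32\eta_i\}$ and its complement, performs the change $\beta_\ell=\etati_\ell/\eta_\ell$ on the near-diagonal coordinates, and then uses Cauchy--Schwarz in $\beta$ and in the off-diagonal variables. The condition $\alpha>\tfrac{d-1}{4}$ is what makes the $\lambda$-integral over the off-diagonal directions converge, and $\alpha>d-\tfrac12-(H_0+H_+)$ is what makes the final radial integral converge at infinity; the extra upper bound $\alpha<\tfrac{d}{4}$ (harmless for the statement) is needed so that the auxiliary exponent $a_k$ lies in $(0,\tfrac12)$. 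None of this structure is visible if one decouples $\eta$ and $\etati$ at the outset.
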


\begin{proof}

Note first that due to the assumption $H_0+H_+>\frac{3d}{4}-\frac12$, we can in fact reinforce condition \eqref{condition-alpha} and assume without loss of generality that 
\begin{equation}\label{condition-alpha-proof}
\max\bigg(d-\frac12-(H_0+H_+),\frac{d-1}{4}\bigg)<\al<\frac{d}{4}\, .
\end{equation}
With this condition in hand, we can start with expanding the considered moment as in the proof of Proposition \ref{prop:moment-cherry-regu}, that is
\begin{align*}
&\mathbb{E}\Big[ \big\|  \<IPsi2>^n_t \big\|_{\ch^{1-2\al}_w(\R^d)}^2\Big]\\
&=\int_{\R^d} dx \, w(x) \int_{(\R^d)^2}d\la d\lati \, e^{\imath \langle x,\la-\lati\rangle} \{1+|\la|^2\}^{\frac12-\al}\{1+|\lati|^2\}^{\frac12-\al}\int_0^t ds\int_0^t d\sti\, \cf_x \big(\cg^n_{t-s}\big)(\la)\cf_x \big(\cg^n_{t-\sti}\big)(\lati)\\
&\hspace{8cm}\int_{(\R^d)^2} dyd\yti  \, e^{-\imath \langle \la,y\rangle}e^{\imath \langle \lati,\yti\rangle}\mathbb{E}\Big[\<Psi2>^n_s(y) \overline{\<Psi2>^n_{\sti}(\yti)}\Big]\, .
\end{align*}
Now observe that formula \eqref{cova-cherry} here reduces to
\begin{align*}
\mathbb{E}\Big[ \<Psi2>^n(s,y)\overline{\<Psi2>^n(\sti,\yti)}\Big]=2\Big(\mathbb{E}\Big[ \<Psi>^n(s,y) \overline{\<Psi>^n(\sti,\yti)} \Big]\Big)^2\, .
\end{align*}
We can thus mimic the arguments ensuring the transition from \eqref{defi-ii-n-t} to \eqref{expre-ii-n-t}, and conclude that
\begin{align*}
\mathbb{E}\Big[ \big\|  \<IPsi2>^n_t \big\|_{\ch^{1-2\al}_w(\R^d)}^2\Big]&=2\|w\|_{L^1(\R^d)}\int_{\R\times \R^d} \mu_{H}^{(n)}\!(d\xi,d\eta)\int_{\R\times \R^d} \mu_{H}^{(n)}\!(d\xiti,d\etati) \, \{1+|\eta+\etati|^2\}^{1-2\al}\\
&\hspace{0.5cm}\bigg|\int_0^t ds\, \cf_x \big(\cg^n_{t-s}\big)(\eta+\etati)e^{\imath s(\xi+\xiti)} \int_0^s dr \, e^{-\imath \xi r} \cf_x(\cg^n_r)(\eta) \int_0^s dv \, e^{-\imath \xiti v} \cf_x(\cg^n_v)(\etati)\bigg|^2\, .
\end{align*}
Then, using both \eqref{bound-mu-n} and \eqref{estim-bas-four-g-n}, we deduce
\begin{align*}
&\mathbb{E}\Big[ \big\|  \<IPsi2>^n_t \big\|_{\ch^{1-2\al}_w(\R^d)}^2\Big]\lesssim \|w\|_{L^1(\R^d)}\, t\int_{(\R^d)^2} \frac{d\eta d\etati}{\{1+|\eta-\etati|^2\}^{2\al}} \bigg(\prod_{i=1,\ldots,d}\frac{1}{|\eta_i|^{2H_i-1}|\etati_i|^{2H_i-1}}\bigg) \\
&\hspace{2cm}\int_0^t ds\, \bigg[\int_{\R}\frac{d\xi}{|\xi|^{2H_0-1}}\bigg|\int_0^s dr \, e^{-\imath \xi r} \cf_x(\cg^n_r)(\eta)\bigg|^2\bigg]\bigg[\int_{\R}\frac{ d\xiti}{|\xiti|^{2H_0-1}}\bigg|\int_0^s dv \, e^{-\imath \xiti v} \cf_x(\cg^n_{v})(\etati)\bigg|^2\bigg] \, ,
\end{align*}
and we can apply the result of Lemma \ref{lem:recov-h-0} to obtain, for all $\ka,\varepsilon>0$ small enough,
\begin{align*}
\mathbb{E}\Big[ \big\|  \<IPsi2>^n_t \big\|_{\ch^{1-2\al}_w(\R^d)}^2\Big]&\lesssim t^{2\ka+2}\int_{(\R^d)^2}\frac{d\eta d\etati}{\{1+|\eta-\etati|^2\}^{2\al}} K_H(\eta) K_H(\etati),
\end{align*}
where the proportional constant only depends on $T$, and where we have set, for every $\eta\in \R^d$,
\begin{equation}\label{defi:k-h}
K_H(\eta)=K_{H,\ka,\varepsilon}(\eta):=\frac{1}{1+|\eta|^{1+2H_0-2\ka-2\varepsilon}}\prod_{i=1,\ldots,d}\frac{1}{|\eta_i|^{2H_i-1}}\, .
\end{equation}
The desired estimate \eqref{estim-mom-cas-rug} is now a straightforward consequence of the subsequent technical Lemma \ref{main-technical}, which achieves the proof of Proposition \ref{prop:moment-cherry}.
\end{proof}

\

\begin{lemma}\label{main-technical}
For all $H\in (0,1)^{d+1}$, $\ka,\varepsilon>0$ and $\eta\in \R^d$, let $K_H(\eta)=K_{H,\ka,\varepsilon}(\eta)$ be the quantity defined in \eqref{defi:k-h}.

\smallskip

If $\frac{3d}{4}-\frac12< H_0+H_+\leq d-\frac12$, then for every $\al>0$ satisfying \eqref{condition-alpha-proof}, there exists $\ka,\varepsilon>0$ small enough such that
$$\int_{\R^d\times \R^d} \frac{d\eta d\etati}{\{1+|\eta-\etati|^2\}^{2\alpha}} K_H(\eta) K_H(\etati) \ < \ \infty.$$
\end{lemma}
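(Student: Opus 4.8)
The plan is to reduce the double integral to a single integral over $\eta$ by a convolution estimate, and then to analyse the resulting expression by splitting the domain into dyadic regions and using polar-type coordinates adapted to the product structure of $K_H$.

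First I would control the "mixing" kernel $\{1+|\eta-\etati|^2\}^{-2\al}$. The condition $\al<\frac{d}{4}$ from \eqref{condition-alpha-proof} means this kernel is \emph{not} integrable on $\R^d$, so one cannot simply bound $K_H(\etati)$ by its supremum and integrate out $\etati$. Instead I would use the generalized Young/convolution inequality: writing $J(\eta):=\int_{\R^d}\{1+|\eta-\etati|^2\}^{-2\al}K_H(\etati)\,d\etati$, one has $J=(\langle\cdot\rangle^{-4\al})\ast K_H$, and the quantity to bound is $\int_{\R^d}K_H(\eta)\,J(\eta)\,d\eta$. The heart of the matter is therefore a pointwise estimate on $J(\eta)$. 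Since $K_H$ has the local singularities $|\etati_i|^{-(2H_i-1)}$ (integrable because $2H_i-1<1$) together with the decay $|\etati|^{-(1+2H_0-2\ka-2\ep)}$ at infinity, a standard computation — split $\etati$ according to whether $|\etati|\lesssim|\eta|$, $|\etati-\eta|\lesssim|\eta|$, or $|\etati|\gtrsim|\eta|$ — should yield
$$J(\eta)\ \lesssim\ \frac{1}{1+|\eta|^{\,\beta}},\qquad \beta:=\min\Big(4\al,\ 1+2H_0+2H_+-d-2\ka-2\ep,\ 4\al+1+2H_0+2H_+-2d-2\ka-2\ep\Big)+\text{(lower order)},$$
where the three candidates come respectively from the region near $\etati=\eta$, the region near $\etati=0$, and the far region; near $\etati=0$ one picks up the factor $|\eta|^{d-1-2H_+}$ coming from $\int_{|\etati|\le|\eta|}\prod|\etati_i|^{-(2H_i-1)}\,d\etati$. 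The precise bookkeeping here — tracking how the local factors $\prod|\etati_i|^{1-2H_i}$ interact with the radial scaling — is the step I expect to be the main obstacle, because the anisotropic product singularities prevent a clean use of ordinary polar coordinates and force a slicing argument of the type already used in \eqref{change-of-coord}.

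Once $J(\eta)\lesssim (1+|\eta|)^{-\beta}$ is established, the remaining integral $\int_{\R^d}K_H(\eta)(1+|\eta|)^{-\beta}\,d\eta$ is of exactly the same type as the ones handled in the proofs of Propositions \ref{prop:moment-cherry-regu} and \ref{prop:moment-cherry}: the local factors $|\eta_i|^{-(2H_i-1)}$ are integrable near the coordinate hyperplanes, and near infinity a spherical change of coordinates reduces convergence to the one-dimensional criterion
$$\int_1^\infty \frac{dr}{r^{\,1-2(d-H_+)+(1+2H_0-2\ka-2\ep)+\beta}}\ <\ \infty,$$
i.e. to $2(d-H_+)-(1+2H_0)-\beta<0$. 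Plugging in the value of $\beta$ and using the two hypotheses $H_0+H_+>\frac{3d}{4}-\frac12$ and $\al>\max(d-\frac12-(H_0+H_+),\frac{d-1}{4})$ one checks that each of the three cases for the minimum defining $\beta$ gives a strictly negative exponent, provided $\ka,\ep$ are taken small enough: the case $\beta\sim 4\al$ uses $\al>\frac{d-1}{4}$ (equivalently $4\al>d-1$, so that $2d-2H_+-1-2H_0-4\al<0$ is implied by $H_0+H_+>\frac{3d}{4}-\frac12$ after combining); the case $\beta\sim 1+2H_0+2H_+-d$ uses $\al>d-\frac12-(H_0+H_+)$; and the far-region case is then automatic. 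Finally one observes that all estimates are uniform in $n$ — indeed $n$ never entered, since we used only \eqref{bound-mu-n} and the definition \eqref{defi:k-h} — which completes the proof.
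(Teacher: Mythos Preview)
Your strategy---estimate the convolution $J(\eta)=\int \{1+|\eta-\etati|^2\}^{-2\al}K_H(\etati)\,d\etati$ pointwise by a purely radial bound $J(\eta)\lesssim (1+|\eta|)^{-\beta}$, then integrate against $K_H(\eta)$---has a genuine gap at the first step. Because $K_H$ is anisotropic (the factor $\prod_i|\etati_i|^{1-2H_i}$ decays at very different rates along different directions), no single radial exponent $\beta$ can capture $J$. Concretely, the near-diagonal region $|\etati-\eta|\le 1$ alone forces $J(\eta)\gtrsim K_H(\eta)$, and $K_H(\eta)$ itself does not satisfy any bound $K_H(\eta)\lesssim |\eta|^{-\beta}$ uniformly in the direction of $\eta$: along the diagonal $K_H\sim |\eta|^{-(1+2H_0+2H_+-d)}$, whereas along $(r,1,\dots,1)$ one gets $K_H\sim r^{-(2H_0+2H_1)}$, and these exponents differ in general. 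Your own caveat (``the main obstacle'') identifies exactly this difficulty, but your three-region sketch does not resolve it; in particular the attribution of the candidate $\beta=1+2H_0+2H_+-d$ to the region ``near $\etati=0$'' is not correct---that region contributes $C|\eta|^{-4\al}$, not this exponent.

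What actually works, and what the paper does, is to abandon the radial bound on $J$ and instead decompose \emph{per coordinate}: for each $i$ one separates $\etati_i\sim\eta_i$ (near-diagonal in that coordinate) from $|\etati_i-\eta_i|\gtrsim |\eta_i|$. On the fully off-diagonal piece the kernel $\{1+|\eta-\etati|^2\}^{-2\al}$ can be bounded by $\{1+|\eta|^2\}^{-\al}\{1+|\etati|^2\}^{-\al}$ and the integral factorizes; this produces the condition $\al>d-\tfrac12-(H_0+H_+)$. On the pieces with some coordinates near-diagonal, the paper performs the substitution $\etati_\ell=\beta_\ell\eta_\ell$, applies Cauchy--Schwarz in $\beta$ and in the remaining spatial variables, and lands on an integral of the form $\int K_H(\eta)^2\cdot(\text{weight})\,d\eta$. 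It is precisely here that the hypothesis $H_i<\tfrac34$ (so that $K_H^2$ is locally integrable) and the threshold $H_0+H_+>\tfrac{3d}{4}-\tfrac12$ (so that $\int_{|\eta|\ge1}K_H^2\cdot(\text{weight})$ converges) enter. Your plan never produces a $K_H^2$ term, which is a sign that the near-diagonal contribution has not been accounted for correctly.
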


\

\begin{proof}[Proof of Lemma \ref{main-technical}]

Combining the identity $K_H(\pm\eta_1,\ldots,\pm \eta_d)=K_H(\eta)$ (for every $\eta\in \R^d$) with the elementary inequality $|\eta_i-\etati_i|\geq ||\eta_i|-|\etati_i||$ (for each $i=1,\ldots,d$), we can first write
\begin{align}
&\int_{\R^d\times \R^d} \frac{d\eta d\etati}{\{1+|\eta-\etati|^2\}^{2\alpha}} K_H(\eta) K_H(\etati)\nonumber\\
&\leq 4^d \int_{\R_+\times \R_+}d\eta_1d\etati_1 \cdots \int_{\R_+\times \R_+}d\eta_d d\etati_d \,  \frac{1}{\{1+|\eta-\etati|^2\}^{2\alpha}} K_H(\eta) K_H(\etati)\nonumber\\
&\leq 4^d \sum_{j_1,\ldots,j_d\in \{0,1\}}\int_{\mathcal{Q}_+^{(j_1)}}d\eta_1d\etati_1 \cdots \int_{\mathcal{Q}_+^{(j_d)}}d\eta_d d\etati_d \,  \frac{1}{\{1+|\eta-\etati|^2\}^{2\alpha}} K_H(\eta) K_H(\etati)\label{decompo-cq}
\end{align}
where we have set
$$\mathcal{Q}_+^{(0)}:=\big\{\eta_i,\etati_i >0: \ \frac12\eta_i \leq \etati_i \leq \frac32 \eta_i\big\} \quad \text{and} \quad \mathcal{Q}_+^{(1)}:=(\R_+\times \R_+)\backslash \mathcal{Q}_+^{(0)}.$$
In fact, for readily-checked symmetry reasons, we only need to focus on the generic situation where $j_1=...=j_k=0$ and $j_{k+1}=...=j_d=1$ in \eqref{decompo-cq}, for some $k\in \{0,\ldots,d\}$. In other words, from now on, we fix $k\in \{0,\ldots,d\}$ and try to show that the following integral is finite:
$$\cj_k:=\int_{\mathcal{Q}_+^{(0)}}d\eta_1d\etati_1 \cdots \int_{\mathcal{Q}_+^{(0)}}d\eta_k d\etati_k\int_{\mathcal{Q}_+^{(1)}}d\eta_{k+1}d\etati_{k+1}\cdots\int_{\mathcal{Q}_+^{(1)}}d\eta_d d\etati_d \,  \frac{1}{\{1+|\eta-\etati|^2\}^{2\alpha}} K_H(\eta) K_H(\etati)\, .$$

\

\

\noindent
\textbf{First case:} $k=0$. For $(\eta_i,\etati_i)\in \mathcal{Q}_+^{(1)}$, one has $|\eta_i-\etati_i|\geq \frac13\max( |\eta_i|,|\etati_i|)$, which leads us to
$$\cj_0=\int_{\mathcal{Q}_+^{(1)}}d\eta_{1}d\etati_{1}\cdots\int_{\mathcal{Q}_+^{(1)}}d\eta_d d\etati_d \,  \frac{1}{\{1+|\eta-\etati|^2\}^{2\alpha}} K_H(\eta) K_H(\etati)\lesssim \bigg[\int_{\R_+^d}\frac{d\eta}{\{1+|\eta|^2\}^{\alpha}} K_H(\eta)\bigg]^2\, .$$
Then, going back to the expression \eqref{defi:k-h} of $K_H$, we get that
\begin{align*}
\int_{\R_+^d}\frac{d\eta}{\{1+|\eta|^2\}^{\alpha}} K_H(\eta)&\lesssim \int_{\R_+^d}\frac{d\eta}{1+|\eta|^{2\al+1+2H_0-2\ka-2\varepsilon}}\prod_{i=1,\ldots,d}\frac{1}{|\eta_i|^{2H_i-1}}\\
&\lesssim \int_0^\infty \frac{dr}{r^{1-2(d-H_+)}\{1+r^{2p}\}}\, ,
\end{align*}
where $p:=\al+\frac12+H_0-\ka-\varepsilon$. Since we have assumed $\al>d-\frac12-(H_0+H_+)$, we can pick $\ka,\varepsilon>0$ small enough so that $1-2(d-H_+)+2p>1$, which achieves to prove that $\cj_0$ is finite.

\

\noindent
\textbf{Second case:} $k\in \{1,\ldots,d\}$. For the sake of clarity, let us rely on the following standard convention.

\

\begin{notation}\label{nota:prod-vec}
Given $\eta \in \R^d$ and $1\leq i\leq j\leq d$, we set $\eta_{i::j}:=(\eta_i,\ldots,\eta_j)\in \R^{j-i+1}$. Besides, for $v,\tilde{v}\in \R^k$, we denote the componentwise product by $v*_.\tilde{v}:=(v_1\tilde{v}_1,\ldots,v_k\tilde{v}_k)\in \R^k$.
\end{notation}

\

With this convention in hand, observe first that for all $\eta,\etati \in \R_+^d$ and $\beta\in \big[\frac12,\frac32\big]^k$, one has
$$K_H(\eta_{1::k}*_.\beta,\etati_{(k+1)::d})\lesssim K_H(\eta_{1::k},\etati_{(k+1)::d})\, .$$
Therefore, by performing the elementary change of variables $\beta_\ell=\frac{\etati_\ell}{\eta_\ell}$ for $\ell=1,\ldots,k$, we deduce that
\begin{align*}
&\cj_k\lesssim \int_{\R_+^k} d\eta_1 \cdots d\eta_k\,  \big\{\eta_1\cdots \eta_k\big\}\int_{\mathcal{Q}_+^{(1)}}d\eta_{k+1}d\etati_{k+1}\cdots\int_{\mathcal{Q}_+^{(1)}}d\eta_d d\etati_d \,  K_H(\eta) K_H(\eta_{1::k},\etati_{(k+1)::d}) \\
&\hspace{6cm} \int_{[\frac12,\frac32]^k}  \frac{d\beta}{\{1+|\eta_{1::k}*_.(1-\beta)|^2+|\eta_{(k+1)::d}-\etati_{(k+1)::d}|^2\}^{2\alpha}} \, .
\end{align*}
Now, just as in the first situation, recall that $|\eta_i-\etati_i|\geq \frac13\max( |\eta_i|,|\etati_i|)$ when $(\eta_i,\etati_i)\in \mathcal{Q}_+^{(1)}$, so that
\begin{align*}
&\cj_k\lesssim \int_{\R_+^k} d\eta_1 \cdots d\eta_k\,  \big\{\eta_1\cdots \eta_k\big\}\int_{\mathcal{Q}_+^{(1)}}d\eta_{k+1}d\etati_{k+1}\cdots\int_{\mathcal{Q}_+^{(1)}}d\eta_d d\etati_d \,  K_H(\eta) K_H(\eta_{1::k},\etati_{(k+1)::d}) \\
&\hspace{2cm} \int_{[\frac12,\frac32]^k}d\beta\,  \frac{1}{\{1+|\eta_{1::k}*_.(1-\beta)|^2+|\eta_{(k+1)::d}|^2\}^{\alpha}} \frac{1}{\{1+|\eta_{1::k}*_.(1-\beta)|^2+|\etati_{(k+1)::d}|^2\}^{\alpha}}\\
&\lesssim \int_{\R_+^k} d\eta_1 \cdots d\eta_k\, \big\{\eta_1\cdots \eta_k\big\}\int_{\R_+^{d-k}}d\eta_{k+1}\cdots d\eta_d\int_{\R_+^{d-k}}d\etati_{k+1}\cdots  d\etati_d \,  K_H(\eta_{1::k},\eta_{(k+1)::d}) K_H(\eta_{1::k},\etati_{(k+1)::d}) \\
&\hspace{2cm} \int_{[\frac12,\frac32]^k}d\beta\,  \frac{1}{\{1+|\eta_{1::k}*_.(1-\beta)|^2+|\eta_{(k+1)::d}|^2\}^{\alpha}} \frac{1}{\{1+|\eta_{1::k}*_.(1-\beta)|^2+|\etati_{(k+1)::d}|^2\}^{\alpha}},
\end{align*}
where the second inequality only consists of an extension of the integration domain for $\eta_j,\etati_j$, $j\geq k+1$.

\smallskip

Applying the Cauchy-Schwarz inequality (with respect to $\beta$), we obtain
\begin{align}
&\cj_k\nonumber\\
&\lesssim \int_{\R_+^k} d\eta_1 \cdots d\eta_k\, \big\{\eta_1\cdots \eta_k\big\}\int_{\R_+^{d-k}}d\eta_{k+1}\cdots d\eta_d\int_{\R_+^{d-k}}d\etati_{k+1}\cdots  d\etati_d \,  K_H(\eta_{1::k},\eta_{(k+1)::d}) K_H(\eta_{1::k},\etati_{(k+1)::d})\nonumber \\
& \bigg(\int_{[\frac12,\frac32]^k} \frac{d\beta}{\{1+|\eta_{1::k}*_.(1-\beta)|^2+|\eta_{(k+1)::d}|^2\}^{2\alpha}}\bigg)^{\frac12} \bigg(\int_{[\frac12,\frac32]^k} \frac{d\beta}{\{1+|\eta_{1::k}*_.(1-\beta)|^2+|\etati_{(k+1)::d}|^2\}^{2\alpha}}\bigg)^{\frac12}\nonumber\\
&\lesssim \int_{\R_+^k} d\eta_1 \cdots d\eta_k\, \big\{\eta_1\cdots \eta_k\big\}\nonumber\\
&\hspace{1cm}\bigg[\int_{\R_+^{d-k}}d\eta_{k+1}\cdots d\eta_d \,  K_H(\eta_{1::k},\eta_{(k+1)::d}) \bigg(\int_{[\frac12,\frac32]^k} \frac{d\beta}{\{1+|\eta_{1::k}*_.(1-\beta)|^2+|\eta_{(k+1)::d}|^2\}^{2\alpha}}\bigg)^{\frac12}\bigg]^2\nonumber\\
&\lesssim \int_{\R_+^d} d\eta \, \big\{\eta_1\cdots \eta_k\big\}K_H(\eta)^2\int_{\R_+^{d-k}}d\la \int_{[\frac12,\frac32]^k} \frac{d\beta}{\{1+|\eta_{1::k}*_.(1-\beta)|^2+|\la|^2\}^{2\alpha}}\, ,\label{cj-k-intermed}
\end{align}
where we have used again Cauchy-Schwarz inequality (with respect to $\eta_{(k+1)::d}$) to get the third estimate.

\

Thanks to condition \eqref{condition-alpha-proof}, and since $k\in \{1,\ldots,d\}$, we can assert that for any $\varepsilon >0$ small enough,
$$0<2\alpha-\frac{d-k}{2}-\frac{\varepsilon}{2}<\frac{k}{2}\, . $$
Therefore, setting $a_k:=\frac{1}{k}\big(2\alpha-\frac{d-k}{2}-\frac{\varepsilon}{2}\big)$, one has $0<a_k<\frac12$. With these observations and notation in mind, let us go back to \eqref{cj-k-intermed} and write first
\begin{align}
\cj_k&\lesssim \int_{\R_+^d} d\eta \, \big\{\eta_1\cdots \eta_k\big\}K_H(\eta)^2\int_{\R_+^{d-k}}\frac{d\la}{\{1+|\la|^2\}^{\frac{d-k}{2}+\frac{\varepsilon}{2}}}\int_{[\frac12,\frac32]^k} \frac{d\beta}{\{1+|\eta_{1::k}*_.(1-\beta)|^2\}^{2\alpha-\frac{d-k}{2}-\frac{\varepsilon}{2}}}\nonumber\\
&\lesssim \int_{\{|\eta|\leq 1\}} d\eta \, K_H(\eta)^2+\int_{\{|\eta|\geq 1\}} d\eta \, \big|\eta_1\cdots \eta_k\big|K_H(\eta)^2\int_{[\frac12,\frac32]^k} \frac{d\beta}{\{1+|\eta_{1::k}*_.(1-\beta)|^2\}^{2\alpha-\frac{d-k}{2}-\frac{\varepsilon}{2}}}\, .\label{cj-k-intermed-2}
\end{align}
Since we have assumed $0<H_i<\frac34$, it is readily checked that $K_H\in L^2_{\text{loc}}(\R^d)$, and so the first integral in \eqref{cj-k-intermed-2} is indeed finite. As for the second integral, one has, with the above-introduced parameter $a_k\in (0,\frac12)$,
\begin{align*}
&\int_{\{|\eta|\geq 1\}} d\eta \, \big|\eta_1\cdots \eta_k\big|K_H(\eta)^2\int_{[\frac12,\frac32]^k} \frac{d\beta}{\{1+|\eta_{1::k}*_.(1-\beta)|^2\}^{2\alpha-\frac{d-k}{2}-\frac{\varepsilon}{2}}}\\
&\lesssim \int_{\{|\eta|\geq 1\}} d\eta \, \bigg\{\frac{1}{|\eta_1|^{2a_k-1}}\cdots \frac{1}{|\eta_k|^{2a_k-1}}\bigg\}K_H(\eta)^2\bigg(\int_{[\frac12,\frac32]} \frac{d\beta}{|1-\beta|^{2a_k}}\bigg)^k\\
&\lesssim \int_{\{|\eta|\geq 1\}} d\eta \, \bigg\{\frac{1}{|\eta_1|^{2a_k-1}}\cdots \frac{1}{|\eta_k|^{2a_k-1}}\bigg\}K_H(\eta)^2\\
&\lesssim \int_{\{|\eta|\geq 1\}} \frac{d\eta}{|\eta|^{2+4H_0-4\ka-4\varepsilon}} \, \bigg\{\frac{1}{|\eta_1|^{4H_1-3+2a_k}}\cdots \frac{1}{|\eta_k|^{4H_k-3+2a_k}}\bigg\}\bigg\{\frac{1}{|\eta_{k+1}|^{4H_{k+1}-2}}\cdots \frac{1}{|\eta_d|^{4H_d-2}}\bigg\}\, .
\end{align*}
Recall again that $0<H_j<\frac34$ and $0<2a_k<1$, which ensures that $4H_j-3+2a_k<1$ for $j=1,\ldots,k$ and $4H_j-2<1$ for $j=k+1,\ldots,d$. We are therefore in a position to apply a spherical change of variables and assert that, for some finite proportional constant,
\begin{align}\label{change-of-coord}
&\int_{\{|\eta|\geq 1\}} \frac{d\eta}{|\eta|^{2+4H_0-4\ka-4\varepsilon}} \, \bigg\{\frac{1}{|\eta_1|^{4H_1-3+2a_k}}\cdots \frac{1}{|\eta_k|^{4H_k-3+2a_k}}\bigg\}\bigg\{\frac{1}{|\eta_{k+1}|^{4H_{k+1}-2}}\cdots \frac{1}{|\eta_d|^{4H_d-2}}\bigg\}\lesssim \int_1^\infty \frac{dr}{r^{p}}\, ,
\end{align}
where
\begin{align*}
p&:=(1-d)+(2+4H_0-4\ka-4\varepsilon)+\sum_{j=1}^k (4H_j-3+2a_k)+\sum_{j=k+1}^d (4H_j-2)\\
&=3-d+4(H_0+H_+)-3k-2(d-k)+2ka_k-4\ka-4\varepsilon\\
&=3-3d+4(H_0+H_+)-k+\big[4\alpha-(d-k)-\varepsilon\big]-4\ka-4\varepsilon\\
&=3+4\Big[\al-d+(H_0+H_+)\big]-4\ka-5\varepsilon \, .
\end{align*}
Since $\al>d-\frac12-(H_0+H_+)$, we can pick $\ka,\varepsilon>0$ small enough so that $p>1$. Going back to \eqref{cj-k-intermed-2}, this achieves to show that $\cj_k$ is finite for $k\in \{1,\ldots,d\}$, and thus the proof of Lemma \ref{main-technical} is now complete.

\end{proof}

\section{Below the threshold: proof of Theorem \ref{theo:threshold}, part $(ii)$}\label{sec:item-ii}

We now turn to the proof of part $(ii)$ of Theorem \ref{theo:threshold}, and to this end, fix $H\in (0,1)^{d+1}$ such that $H_0+H_+\leq \frac{3d}{4}-\frac12 $ for the whole section.

\

The set $\ce \subset \cd(\R_+\times \R^d)$ of test-functions $\Phi$ that will serve us to establish the divergence property \eqref{divergence-theo} can be immediately introduced as follows.

\begin{definition}\label{defi-space-ce}
We define $\ce\subset \cd(\R_+\times \R^d)$ as the set of test-functions $\Phi(t,x):=\vp(t)\psi(x)$, where $\vp\in \cd(\R_+)$ and $\psi\in \cd(\R^d)$ satisfy:

\smallskip

\noindent
$(1)$ $\vp\geq 0$, $\text{supp}\ \vp\subset (0,1)$ and $\int_{\frac34}^{1} dt \, \vp(t) >0$.

\smallskip

\noindent
$(2)$ $\cf_x(\psi)(\eta)=\cf_x(\psi)(-\eta)$ for every $\eta\in \R^d$, and $\inf_{|\theta|\leq 1}|\cf_x(\psi)(\theta)|^2>0$.
\end{definition}

\smallskip

It is clear that $\ce\neq \emptyset$. From now on, we fix such a test-function $\Phi=\vp\otimes \psi\in \ce$, as well as an arbitrary sequence $c_n:\R_+\times \R^d\to \R$ in the definition \eqref{ref-c-n-theo} of $\<IPsi>^n$.

\

For every fixed $n\geq 1$, the moment under consideration in \eqref{divergence-theo} can be expanded as
\begin{align*}
&\mathbb{E}\Big[ \big|\langle \<IPsi2>^n,\Phi\rangle \big|^2\Big] = \int_{(0,\infty)^2} dt d\tti\int_{(\R^d)^2} dyd\yti \, \Phi(t,y)\overline{\Phi(\tti,\yti)} \mathbb{E} \Big[\<IPsi2>^n(t,y) \overline{\<IPsi2>^n(\tti,\yti)}\Big]\\
&=\int_{(0,\infty)^2} dt d\tti\int_{(\R^d)^2} dyd\yti \, \Phi(t,y)\overline{\Phi(\tti,\yti)}\int_0^t du \int_0^{\tti}d\uti \int_{(\R^d)^2} dz d\zti \, \cg^n_{t-u}(y-z)\cg^n_{\tti-\uti}(\yti-\zti) \mathbb{E} \Big[\<Psi2>^n_u(z) \overline{\<Psi2>^n_{\uti}(\zti)}\Big],
\end{align*}
which, owing to formula \eqref{cova-cherry}, yields
\begin{align*}
&\mathbb{E}\Big[ \big|\langle \<IPsi2>^n,\Phi\rangle \big|^2\Big]\\
&=\int_{(0,\infty)^2} dt d\tti\int_{(\R^d)^2} dyd\yti \, \Phi(t,y)\overline{\Phi(\tti,\yti)}\int_0^t du \int_0^{\tti}d\uti  \int_{(\R^d)^2} dz d\zti \, \cg^n_{t-u}(y-z)\cg^n_{\tti-\uti}(\yti-\zti) \\
&\hspace{4cm}\Big[2\Big(\mathbb{E}\Big[ \<Psi>^n(u,z) \overline{\<Psi>^n(\uti,\zti)} \Big]\Big)^2+\Big\{\mathbb{E}\Big[ \<Psi>^n(u,z)^2\Big]-c_n(u,z)\Big\}\Big\{\mathbb{E}\Big[ \overline{\<Psi>^n(\uti,\zti)}^2\Big]-\overline{c_n(\uti,\zti)}\Big\}\Big]\\
&=2\int_{(0,\infty)^2} dt d\tti\int_{(\R^d)^2} dyd\yti \, \Phi(t,y)\overline{\Phi(\tti,\yti)}\int_0^t du \int_0^{\tti}d\uti  \int_{(\R^d)^2} dz d\zti \, \cg^n_{t-u}(y-z)\cg^n_{\tti-\uti}(\yti-\zti) \Big(\mathbb{E}\Big[ \<Psi>^n(t,y) \overline{\<Psi>^n(\uti,\zti)} \Big]\Big)^2\\
&\hspace{1cm}+\bigg|\int_{(0,\infty)} dt \int_{\R^d} dy \, \Phi(t,y)\int_0^t du   \int_{\R^d} dz  \, \cg^n_{t-u}(y-z) \Big\{\mathbb{E}\Big[ \<Psi>^n(u,z)^2\Big]-c_n(u,z)\Big\}\bigg|^2\, .
\end{align*}
As a result, it holds that
\begin{equation}\label{fir-low-bo}
\mathbb{E}\Big[ \big|\langle \<IPsi2>^n,\Phi\rangle \big|^2\Big]\geq 2 \, \ci_n \, ,
\end{equation}
where
\begin{align*}
&\ci_n:=\\
&\int_{(0,\infty)^2} dt d\tti\int_{(\R^d)^2} dyd\yti \, \Phi(t,y)\overline{\Phi(\tti,\yti)}\int_0^t du \int_0^{\tti}d\uti  \int_{(\R^d)^2} dz d\zti \, \cg^n_{t-u}(y-z)\cg^n_{\tti-\uti}(\yti-\zti) \Big(\mathbb{E}\Big[ \<Psi>^n(t,y) \overline{\<Psi>^n(\uti,\zti)} \Big]\Big)^2.
\end{align*}
Our objective now is to show that $\ci_n \to \infty$ as $n\to \infty$, which, in particular, emphasizes the fact that the sequence $(c_n)$ has no influence on the divergence phenomenon.

\smallskip

Let us apply the covariance formula \eqref{cova-luxo} to expand $\ci_n$ as
\begin{align*}
\ci_n &=\int_{(0,\infty)^2} dt d\tti\int_{(\R^d)^2} dyd\yti \, \Phi(t,y)\overline{\Phi(\tti,\yti)}\int_0^t du \int_0^{\tti}d\uti \int_{(\R^d)^2} dz d\zti \, \cg^n_{t-u}(y-z)\cg^n_{\tti-\uti}(\yti-\zti) \\
&\hspace{3cm}\int_{\R\times \R^d} \mu_{H}^{(n)}\!(d\xi,d\eta) \, e^{\imath \xi(u-\uti)} e^{\imath \langle \eta,z-\zti\rangle}\int_0^u ds \, e^{-\imath \xi s} \cf_x(\cg^n_s)(\eta)\int_0^{\uti} d\sti \, e^{\imath \xi \sti} \overline{\cf_x(\cg^n_{\sti})}(\eta)\\
&\hspace{4cm}\int_{\R\times \R^d} \mu_{H}^{(n)}\!(d\xiti,d\etati) \,e^{\imath \xiti(u-\uti)} e^{\imath \langle \etati,z-\zti\rangle}\int_0^u dr \, e^{-\imath \xiti r} \cf_x(\cg^n_r)(\etati)\int_0^{\uti} d\rti \, e^{\imath \xiti \rti} \overline{\cf_x(\cg^n_{\rti})}(\etati)\\
&=\int_{\R\times \R^d} \mu_{H}^{(n)}\!(d\xi,d\eta) \int_{\R\times \R^d} \mu_{H}^{(n)}\!(d\xiti,d\etati) \\
&\hspace{0.2cm}\bigg|\int_0^\infty dt\int_0^t du \int_{(\R^d)^2} dy dz \, \Phi(t,y) \cg^n_{t-u}(y-z) e^{\imath u(\xi+\xiti)} e^{\imath \langle z,\eta+\etati\rangle} \int_0^u ds \, e^{-\imath \xi s} \cf_x(\cg^n_s)(\eta) \int_0^u dr \, e^{-\imath \xiti r}\cf_x(\cg^n_r)(\etati)\bigg|^2\, .
\end{align*}
In this expression, we can rewrite the integral over $t$ as
\begin{align*}
&\int_0^\infty dt\, \vp(t)\int_0^t du\,  e^{\imath u(\xi+\xiti)}\int_{\R^d} dy\, \psi(y) \int_{\R^d} dz \,  \cg^n_{t-u}(y-z)  e^{\imath \langle z,\eta+\etati\rangle} \int_0^u ds \, e^{-\imath \xi s} \cf_x(\cg^n_s)(\eta) \int_0^u dr \, e^{-\imath \xiti r}\cf_x(\cg^n_r)(\etati)\\
&=\int_0^\infty dt\, \vp(t)\int_0^t du \, e^{\imath u(\xi+\xiti)}\int_{\R^d}dy\, \psi(y)e^{\imath \langle y,\eta+\etati\rangle} \\
&\hspace{4cm}\int_{\R^d} dz \,  \cg^n_{t-u}(z)  e^{-\imath \langle z,\eta+\etati\rangle} \int_0^u ds \, e^{-\imath \xi s} \cf_x(\cg^n_s)(\eta) \int_0^u dr \, e^{-\imath \xiti r}\cf_x(\cg^n_r)(\etati)\\
&=\cf_x(\psi)(\eta+\etati)\int_0^\infty dt \, \vp(t)\int_0^t du \, e^{\imath u(\xi+\xiti)}\cf_x(\cg^n_{t-u})(\eta+\etati)\int_0^u ds \, e^{-\imath \xi s} \cf_x(\cg^n_s)(\eta) \int_0^u dr \, e^{-\imath \xiti r}\cf_x(\cg^n_r)(\etati)\, .
\end{align*}
Therefore, recalling also the definition \eqref{defi-mu-n} of $\mu_{H}^{(n)}$, we get
\begin{align}
&\ci_n=\int_{\R\times \R^d} \mu_{H}^{(n)}\!(d\xi,d\eta) \int_{\R\times \R^d} \mu_{H}^{(n)}\!(-d\xiti,-d\etati)\,  \big|\cf_x(\psi)(\eta-\etati)\big|^2\nonumber\\
&\hspace{2cm}\bigg|\int_0^\infty dt \, \vp(t)\int_0^t du \, e^{\imath u(\xi-\xiti)}\cf_x(\cg^n_{t-u})(\eta-\etati)\int_0^u ds \, e^{-\imath \xi s} \cf_x(\cg^n_s)(\eta) \int_0^u dr \, e^{\imath \xiti r}\cf_x(\cg^n_r)(\etati)\bigg|^2\nonumber\\
&\geq \int_{(1,\infty)^d\cap \cac_d} \bigg(\prod_{i=1,\ldots,d}\frac{d\eta_i}{|\eta_i|^{2H_i-1}}\bigg)\int_{\eta_1}^{2\eta_1}\frac{d\etati_1}{|\etati_1|^{2H_1-1}}\cdots\int_{\eta_d}^{2\eta_d}\frac{d\etati_d}{|\etati_d|^{2H_d-1}} \big|\cf_x(\psi)(\eta-\etati)\big|^2\nonumber\\
&\hspace{1cm}\int_{|\eta|}^{2|\eta|} \frac{d\xi}{|\xi|^{2H_0-1}}\int_{|\etati|}^{2|\etati|} \frac{d\xiti}{|\xiti|^{2H_0-1}}\big|\cf \rho_n(-\xi,-\eta)\big|^2\big|\cf \rho_n(\xiti,\etati)\big|^2\nonumber\\
&\hspace{1.5cm}\bigg|\int_0^\infty dt \, \vp(t)\int_0^t du \, e^{\imath u(\xi-\xiti)}\cf_x(\cg^n_{t-u})(\eta-\etati)\int_0^u ds \, e^{-\imath \xi s} \cf_x(\cg^n_s)(\eta) \int_0^u dr \, e^{\imath \xiti r}\cf_x(\cg^n_r)(\etati)\bigg|^2 \, ,\label{first-low-bou}
\end{align}
where the set $\cac_d$ is defined through the spherical-coordinates expression
\begin{equation}\label{defi:ens-c-d}
\cac_d:=\bigg\{\Big(r \cos(\theta_1),r\sin(\theta_1)\cos(\theta_2),r\sin(\theta_1)\sin(\theta_2)\cos(\theta_3),\ldots,r\prod_{i=1}^{d-1}\sin(\theta_i)\Big); \ r>0,\ \theta\in \Big[\frac{\pi}{8},\frac{3\pi}{8}\Big]^{d-1}\bigg\}\, .
\end{equation}
At this point, observe that due to $H_0+H_+\leq \frac{3d}{4}-\frac12$, we can find a new set parameters $H_0',\ldots,H_d'\in (0,1)$ such that 
\begin{equation}\label{h-prime}
H_i'\geq H_i \ (i=0,\ldots,d) \quad \text{and} \quad H_0'+H_+'=\frac{3d}{4}-\frac12, \quad  \text{where} \ H_+':=\sum_{i=1}^d H_i'\, .
\end{equation}
Once endowed with these parameters, and going back to the above estimate \eqref{first-low-bou}, we easily see that
\begin{equation}\label{second-bo}
\ci_n\geq \cj_n\, ,
\end{equation}
with
\begin{align*}
\cj_n&:=\int_{(1,\infty)^d\cap \cac_d} \bigg(\prod_{i=1,\ldots,d}\frac{d\eta_i}{|\eta_i|^{2H'_i-1}}\bigg)\int_{\eta_1}^{2\eta_1}\frac{d\etati_1}{|\etati_1|^{2H'_1-1}}\cdots\int_{\eta_d}^{2\eta_d}\frac{d\etati_d}{|\etati_d|^{2H'_d-1}} \big|\cf_x(\psi)(\eta-\etati)\big|^2\\
&\hspace{1cm}\int_{|\eta|}^{2|\eta|} \frac{d\xi}{|\xi|^{2H'_0-1}}\int_{|\etati|}^{2|\etati|} \frac{d\xiti}{|\xiti|^{2H'_0-1}}\big|\cf \rho_n(-\xi,-\eta)\big|^2\big|\cf \rho_n(\xiti,\etati)\big|^2\\
&\hspace{1.5cm}\bigg|\int_0^\infty dt \, \vp(t)\int_0^t du \, e^{\imath u(\xi-\xiti)}\cf_x(\cg^n_{t-u})(\eta-\etati)\int_0^u ds \, e^{-\imath \xi s} \cf_x(\cg^n_s)(\eta) \int_0^u dr \, e^{\imath \xiti r}\cf_x(\cg^n_r)(\etati)\bigg|^2\, .
\end{align*}
The rest of the proof is devoted to showing that $\cj_n \to \infty$ as $n\to \infty$, and to this end, we shall lean on the following decomposition of the integral over $t$.
\begin{lemma}\label{lem:decompo-m-r}
For all $\xi,\xiti\geq 0$ and $\eta,\etati\in \R^d$, one has
\begin{align}
&\int_0^\infty dt \, \vp(t)\int_0^t du \, e^{\imath u(\xi-\xiti)}\cf_x(\cg^n_{t-u})(\eta-\etati)\int_0^u ds \, e^{-\imath \xi s} \cf_x(\cg^n_s)(\eta) \int_0^u dr \, e^{\imath \xiti r}\cf_x(\cg^n_r)(\etati)\nonumber\\
&\hspace{4cm}=\1_{\{|\eta|\leq n\}}\1_{\{|\etati|\leq n\}} \Big[M^n_\vp((\xi,\eta),(\xiti,\etati))+R^n_\vp((\xi,\eta),(\xiti,\etati))\Big]\, ,\label{expans-int-t}
\end{align}
with
\begin{align}
&M^n_\vp((\xi,\eta),(\xiti,\etati))\nonumber\\
&:=\frac{1}{4|\eta| |\etati|}\int_0^\infty dt \, \vp(t)\int_0^t du \, e^{\imath u(\xi-\xiti)}\cf_x(\cg^n_{t-u})(\eta-\etati)\int_0^u ds \, e^{-\imath s(\xi-|\eta|) }  \int_0^u dr \, e^{\imath  r(\xiti-|\etati|)}\label{defi-m-n}
\end{align}
and where, for every $\ka\in [0,1]$,
\begin{equation}\label{bound-r-n}
\sup_{n\geq 1}\big|R^n_\vp((\xi,\eta),(\xiti,\etati))\big|\lesssim \frac{1}{|\eta||\etati|}\bigg[\frac{1}{|\xi-|\eta||^\ka |\xiti+|\etati||}+\frac{1}{|\xi+|\eta|| |\xiti+|\etati||}+\frac{1}{|\xi+|\eta|| |\xiti-|\etati||^\ka}\bigg] \, .
\end{equation}

\end{lemma}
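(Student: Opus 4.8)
The identity \eqref{expans-int-t} is a purely algebraic rearrangement, while the bound \eqref{bound-r-n} reduces to one-dimensional integral estimates; no deep difficulty is involved. First I would record that, by the very definition \eqref{defi-cg-n} of $\cg^n$, one has $\cf_x(\cg^n_s)(\eta)=\1_{\{|\eta|\leq n\}}\frac{\sin(s|\eta|)}{|\eta|}$ for $s\geq 0$, so that the cut-offs carried by the two inner kernels $\cf_x(\cg^n_s)(\eta)$ and $\cf_x(\cg^n_r)(\etati)$ factor out of the $s$- and $r$-integrals (and then the $u,t$-integrals) as the prefactor $\1_{\{|\eta|\leq n\}}\1_{\{|\etati|\leq n\}}$ appearing in \eqref{expans-int-t}, the remaining cut-off $\1_{\{|\eta-\etati|\leq n\}}$ staying inside $\cf_x(\cg^n_{t-u})$. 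Next, using $\frac{\sin(s|\eta|)}{|\eta|}=\frac{1}{2\imath|\eta|}\big(e^{\imath s|\eta|}-e^{-\imath s|\eta|}\big)$ and the analogous identity in $\etati$, I would expand $e^{-\imath\xi s}\cf_x(\cg^n_s)(\eta)\,e^{\imath\xiti r}\cf_x(\cg^n_r)(\etati)$ into the four exponential terms indexed by the sign choices $(\pm|\eta|,\pm|\etati|)$. The branch $(+|\eta|,-|\etati|)$ carries the exponentials $e^{-\imath s(\xi-|\eta|)}e^{\imath r(\xiti-|\etati|)}$ and the complex prefactor $\frac{1}{2\imath|\eta|}\cdot\big(-\frac{1}{2\imath|\etati|}\big)=\frac{1}{4|\eta||\etati|}$; reinserting this branch into the $t,u$-integration reproduces exactly $M^n_\vp$ as written in \eqref{defi-m-n}. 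The remaining three branches $(+|\eta|,+|\etati|)$, $(-|\eta|,+|\etati|)$, $(-|\eta|,-|\etati|)$ constitute $R^n_\vp$, each of the form $\frac{\pm1}{4|\eta||\etati|}\int_0^\infty\vp(t)\int_0^t du\, e^{\imath u(\xi-\xiti)}\cf_x(\cg^n_{t-u})(\eta-\etati)\int_0^u e^{-\imath\lambda s}\,ds\int_0^u e^{\imath\mu r}\,dr$, where the pair $(\lambda,\mu)$ equals $(\xi-|\eta|,\xiti+|\etati|)$, $(\xi+|\eta|,\xiti+|\etati|)$ and $(\xi+|\eta|,\xiti-|\etati|)$ respectively.

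To establish \eqref{bound-r-n} I would then use, for any real $\lambda$ and any $u>0$, the elementary estimate $\big|\int_0^u e^{-\imath\lambda s}\,ds\big|\leq\min\big(u,\,2/|\lambda|\big)$, hence $\big|\int_0^u e^{-\imath\lambda s}\,ds\big|\leq u^{1-\ka}\,(2/|\lambda|)^{\ka}$ for every $\ka\in[0,1]$ (when $\lambda=0$ this last bound is read as $\leq u$ if $\ka=0$, and is vacuous otherwise), together with the plain bound $\big|\int_0^u e^{\imath\mu r}\,dr\big|\leq 2/|\mu|$. In each of the three branches I would apply the $\ka$-interpolated bound to the frequency that may be small — namely $\xi-|\eta|$ in the first branch, $\xiti-|\etati|$ in the third, and either of the two frequencies (say $\mu=\xiti+|\etati|$) in the second — and the plain power-one bound to the other one; the three resulting estimates are precisely the three summands of \eqref{bound-r-n}, up to the common prefactor $\frac{1}{|\eta||\etati|}$ and absolute constants. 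It then only remains to control the surviving outer integral $\int_0^\infty\vp(t)\int_0^t|\cf_x(\cg^n_{t-u})(\eta-\etati)|\,u^{1-\ka}\,du\,dt$, which is at most $\|\vp\|_{L^1}$ since $|\cf_x(\cg^n_{t-u})(\eta-\etati)|\leq |t-u|\leq 1$ and $u^{1-\ka}\leq 1$ on $\text{supp}\,\vp\subset(0,1)$; because this step uses no information on $n$, the resulting estimate is uniform in $n$, as claimed.

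I do not expect a genuine obstacle here; the only point that demands some care is the bookkeeping, that is, matching each of the three off-resonant branches with the correct choice of the $\ka$-interpolation so that the three-term structure of \eqref{bound-r-n} comes out exactly, and making sure the interpolation is spent only on the genuinely small differences $\xi-|\eta|$ and $\xiti-|\etati|$ while the off-resonant sums $\xi+|\eta|$, $\xiti+|\etati|$ are handled with the cheaper power-one estimate. One should also note in passing that if $\eta$ or $\etati$ vanishes the identity \eqref{expans-int-t} is to be read in the limiting sense $\frac{\sin(s|\eta|)}{|\eta|}\to s$ (a case that never arises in the later use of the lemma, where $|\eta|,|\etati|\geq 1$), and that whenever one of the denominators $\xi\pm|\eta|$, $\xiti\pm|\etati|$ equals $0$ the corresponding term of \eqref{bound-r-n} is $+\infty$ and the bound holds trivially.
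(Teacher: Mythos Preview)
Your proposal is correct and follows essentially the same route as the paper: expand the two inner sine kernels into exponentials, isolate the $(+|\eta|,-|\etati|)$ branch as $M^n_\vp$, and bound the three remaining branches via $\big|\int_0^u e^{\imath\lambda s}\,ds\big|\le\min(u,2/|\lambda|)$ together with $|\cf_x(\cg^n_{t-u})(\eta-\etati)|\le 1$ and $\text{supp}\,\vp\subset(0,1)$. The paper's own argument is terser (it simply writes down the three-term expression for $R^n_\vp$ and states that \eqref{bound-r-n} ``immediately follows''), but the mechanism you spell out with the $\ka$-interpolation is exactly what is meant.

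One small inconsistency in your write-up: for the middle branch $(\lambda,\mu)=(\xi+|\eta|,\xiti+|\etati|)$ you first say you would apply the $\ka$-interpolated bound to $\mu$, which would produce $|\xiti+|\etati||^{-\ka}$ rather than the $|\xiti+|\etati||^{-1}$ appearing in \eqref{bound-r-n}. Your own concluding remark (``the interpolation is spent only on the genuinely small differences \dots\ while the off-resonant sums \dots\ are handled with the cheaper power-one estimate'') is the correct prescription: for that branch apply the plain $2/|\lambda|$, $2/|\mu|$ bound to both integrals, and the middle summand of \eqref{bound-r-n} comes out with exponent $1$ on both factors, independently of $\ka$. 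With that adjustment the argument is complete.
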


\begin{proof}[Proof  of Lemma \ref{lem:decompo-m-r}]
By writing
$$\cf_x(\cg^n_s)(\eta)=\frac{1}{2\imath |\eta|} \big\{e^{\imath s|\eta|}-e^{-\imath s |\eta|}\big\}\1_{\{|\eta|\leq n\}}\, ,$$
the integral under consideration can indeed be decomposed as a sum $M^n_\vp+R^n_\vp$, with $M^n_\vp$ given by \eqref{defi-m-n} and
\begin{align*}
&R^n_\vp((\xi,\eta),(\xiti,\etati)):=-\1_{\{|\eta|\leq n\}}\1_{\{|\etati|\leq n\}}\frac{1}{4|\eta| |\etati|}\int_0^\infty dt \, \vp(t)\int_0^t du \, e^{\imath u(\xi-\xiti)}\cf_x(\cg^n_{t-u})(\eta-\etati)\\
&\hspace{5cm}\int_0^u ds \int_0^u dr \, e^{-\imath s\xi }  e^{\imath  r\xiti}\big\{e^{\imath s|\eta|}e^{\imath r|\etati|}-e^{-\imath s |\eta|}e^{\imath r|\etati|}+e^{-\imath s |\eta|}e^{-\imath r |\etati|}\big\}\\
&=-\1_{\{|\eta|\leq n\}}\1_{\{|\etati|\leq n\}}\frac{1}{4|\eta| |\etati|}\int_0^1 dt \, \vp(t)\int_0^t du \, e^{\imath u(\xi-\xiti)}\cf_x(\cg^n_{t-u})(\eta-\etati)\\
&\hspace{0.1cm}\bigg\{\int_0^u ds \, e^{-\imath s(\xi-|\eta|) }  \int_0^u dr\, e^{\imath  r(\xiti+|\etati|)}-\int_0^u ds\, e^{-\imath s(\xi+|\eta|) }  \int_0^u dr  \,  e^{\imath  r(\xiti+|\etati|)}+\int_0^u ds\, e^{-\imath s(\xi+|\eta|)} \int_0^u dr  \,   e^{\imath  r(\xiti-|\etati|)}\bigg\}\, ,
\end{align*}
where we have used the assumption $\text{supp}\, \vp\subset (0,1)$. Based on the latter expansion, and since 
$$\sup_{0\leq t\leq 1} |\cf_x(\cg^n_{t})(\eta-\etati) |\leq 1\, ,$$
we get
\begin{align*}
&\big|R^n_\vp((\xi,\eta),(\xiti,\etati))\big|\lesssim \frac{1}{|\eta| |\etati|}\|\vp\|_\infty \int_0^1 du \, \bigg\{\bigg|\int_0^u ds \, e^{-\imath s(\xi-|\eta|) }  \int_0^u dr\, e^{\imath  r(\xiti+|\etati|)}\bigg|\\
&\hspace{3cm}+\bigg|\int_0^u ds\, e^{-\imath s(\xi+|\eta|) }  \int_0^u dr  \,  e^{\imath  r(\xiti+|\etati|)}\bigg|+\bigg|\int_0^u ds\, e^{-\imath s(\xi+|\eta|)} \int_0^u dr  \,   e^{\imath  r(\xiti-|\etati|)}\bigg|\bigg\}\, .
\end{align*}
The bound \eqref{bound-r-n} immediately follows.
\end{proof}

Let us go back to \eqref{second-bo}, and with expansion \eqref{expans-int-t} in hand, decompose $\cj_n$ into
\begin{equation}\label{decompo-cj-n}
\cj_n:=\cj_M^n+\cj_R^n+\cj_{M,R}^n\, ,
\end{equation}
with
\begin{align}
&\cj^n_M:=\int_{(1,\infty)^d\cap \cac_d} \bigg(\prod_{i=1,\ldots,d}\frac{d\eta_i}{|\eta_i|^{2H'_i-1}}\bigg)\int_{\eta_1}^{2\eta_1}\frac{d\etati_1}{|\etati_1|^{2H'_1-1}}\cdots\int_{\eta_d}^{2\eta_d}\frac{d\etati_d}{|\etati_d|^{2H'_d-1}}\1_{\{|\eta|\leq n\}}\1_{\{|\etati|\leq n\}} \nonumber\\
&\hspace{1.5cm}\big|\cf_x(\psi)(\eta-\etati)\big|^2\int_{|\eta|}^{2|\eta|} \frac{d\xi}{|\xi|^{2H'_0-1}}\int_{|\etati|}^{2|\etati|} \frac{d\xiti}{|\xiti|^{2H'_0-1}}\big|\cf \rho_n(-\xi,-\eta)\big|^2\big|\cf \rho_n(\xiti,\etati)\big|^2\big|M^n_\vp((\xi,\eta),(\xiti,\etati))\big|^2\, ,\label{defi:i-n-m}
\end{align}
\begin{align}
&\cj^n_R:=\int_{(1,\infty)^d\cap \cac_d} \bigg(\prod_{i=1,\ldots,d}\frac{d\eta_i}{|\eta_i|^{2H'_i-1}}\bigg)\int_{\eta_1}^{2\eta_1}\frac{d\etati_1}{|\etati_1|^{2H'_1-1}}\cdots\int_{\eta_d}^{2\eta_d}\frac{d\etati_d}{|\etati_d|^{2H'_d-1}}\1_{\{|\eta|\leq n\}}\1_{\{|\etati|\leq n\}} \nonumber\\
&\hspace{1cm}\big|\cf_x(\psi)(\eta-\etati)\big|^2\int_{|\eta|}^{2|\eta|} \frac{d\xi}{|\xi|^{2H'_0-1}}\int_{|\etati|}^{2|\etati|} \frac{d\xiti}{|\xiti|^{2H'_0-1}}\, \big|\cf \rho_n(-\xi,-\eta)\big|^2\big|\cf \rho_n(\xiti,\etati)\big|^2\big|R^n_\vp((\xi,\eta),(\xiti,\etati))\big|^2\, ,\label{defi:i-n-r}
\end{align}
and
\begin{align*}
&\cj^n_{M,R}:=\int_{(1,\infty)^d\cap \cac_d} \bigg(\prod_{i=1,\ldots,d}\frac{d\eta_i}{|\eta_i|^{2H'_i-1}}\bigg)\int_{\eta_1}^{2\eta_1}\frac{d\etati_1}{|\etati_1|^{2H'_1-1}}\cdots\int_{\eta_d}^{2\eta_d}\frac{d\etati_d}{|\etati_d|^{2H'_d-1}} \1_{\{|\eta|\leq n\}}\1_{\{|\etati|\leq n\}}\\
&\hspace{2cm}\big|\cf_x(\psi)(\eta-\etati)\big|^2\int_{|\eta|}^{2|\eta|} \frac{d\xi}{|\xi|^{2H'_0-1}}\int_{|\etati|}^{2|\etati|} \frac{d\xiti}{|\xiti|^{2H'_0-1}}\big|\cf \rho_n(-\xi,-\eta)\big|^2\big|\cf \rho_n(\xiti,\etati)\big|^2\\
&\hspace{3cm}\big\{M^n_\vp((\xi,\eta),(\xiti,\etati))\overline{R^n_\vp((\xi,\eta),(\xiti,\etati))}+\overline{M^n_\vp((\xi,\eta),(\xiti,\etati))}R^n_\vp((\xi,\eta),(\xiti,\etati))\big\}\, .
\end{align*}

\smallskip

We will now establish that $\cj^n_M\to \infty$ as $n\to \infty$, while $\cj^n_R$ and $\cj^n_{M,R}$ are uniformly bounded over $n$.

\

\subsection{Analysis of $\cj^n_M$}

One has $\cf \rho_n(\xi,\eta)=\cf\rho(2^{-n} \xi,2^{-n}\eta)$ and, following Definition \ref{defi:mollif}, we know that $\cf\rho(0)=1$. Therefore, for every $n$ large enough and for all $(\xi,\eta),(\xi,\etati)$ such that $1\leq |\eta|\leq n,1\leq |\etati|\leq n,|\eta|<\xi<2|\eta|$ and $|\etati|<\xiti<2|\etati|$, one has
$$\big|\cf \rho_n(-\xi,-\eta)\big|\geq \frac12 \quad  \text{and} \quad \big|\cf \rho_n(\xiti,\etati)\big| \geq \frac12\, .$$
Based on this uniform lower bound, we get that for all $\eta,\etati$ with $1\leq |\eta|\leq n$ and $1\leq |\etati|\leq n$,
\begin{align}
&\int_{|\eta|}^{2|\eta|} \frac{d\xi}{|\xi|^{2H'_0-1}}\int_{|\etati|}^{2|\etati|} \frac{d\xiti}{|\xiti|^{2H'_0-1}}\big|\cf \rho_n(-\xi,-\eta)\big|^2\big|\cf \rho_n(-\xiti,\etati)\big|^2\big|M^n_\vp((\xi,\eta),(\xiti,\etati))\big|^2\nonumber\\
&\geq \frac{1}{16} \int_{|\eta|}^{2|\eta|} \frac{d\xi}{|\xi|^{2H'_0-1}}\int_{|\etati|}^{2|\etati|} \frac{d\xiti}{|\xiti|^{2H'_0-1}}\big|M^n_\vp((\xi,\eta),(\xiti,\etati))\big|^2,\label{i-m-n-1}
\end{align}
and then, using only elementary changes of variables,
\begin{align}
&\int_{|\eta|}^{2|\eta|} \frac{d\xi}{|\xi|^{2H'_0-1}}\int_{|\etati|}^{2|\etati|} \frac{d\xiti}{|\xiti|^{2H'_0-1}}\big|M^n_\vp((\xi,\eta),(\xiti,\etati))\big|^2\nonumber\\
&= \frac{1}{|\eta|^{2H'_0-2} |\etati|^{2H'_0-2}}\int_{1}^{2} \frac{d\xi}{|\xi|^{2H'_0-1}}\int_{1}^{2} \frac{d\xiti}{|\xiti|^{2H'_0-1}}\big|M^n_\vp((\xi|\eta|,\eta),(\xiti|\etati|,\etati))\big|^2\nonumber\\
&\geq\frac{c}{|\eta|^{2H'_0} |\etati|^{2H'_0}} \int_{1}^{2} d\xi\int_{1}^{2} d\xiti\, \bigg|\int_0^\infty dt \, \vp(t)\int_0^t du \, e^{\imath u(\xi|\eta|-\xiti|\etati|)}\cf_x(\cg^n_{t-u})(\eta-\etati)\int_0^u ds \, e^{\imath  s|\eta|(\xi-1)} \int_0^u dr \, e^{-\imath r|\etati|(\xiti-1)}\bigg|^2\nonumber\\
&\geq\frac{c}{|\eta|^{2H'_0} |\etati|^{2H'_0}} \int_{0}^{1} d\xi\int_{0}^{1} d\xiti\, \bigg|\int_0^\infty dt \, \vp(t)\int_0^t du \, e^{\imath u(\xi|\eta|-\xiti|\etati|)}e^{\imath u(|\eta|-|\etati|)}\cf_x(\cg^n_{t-u})(\eta-\etati)\int_0^u ds \, e^{\imath  s|\eta|\xi} \int_0^u dr \, e^{-\imath r|\etati|\xiti}\bigg|^2\nonumber\\
&\geq\frac{c}{|\eta|^{2H'_0+1} |\etati|^{2H'_0+1}} \int_{0}^{|\eta|} d\xi\int_{0}^{|\etati|} d\xiti\, \bigg|\int_0^\infty dt \, \vp(t)\int_0^t du \, e^{\imath u(\xi-\xiti)}e^{\imath u(|\eta|-|\etati|)}\cf_x(\cg^n_{t-u})(\eta-\etati)\int_0^u ds \, e^{\imath  s\xi} \int_0^u dr \, e^{-\imath r\xiti}\bigg|^2\nonumber\\
&\geq\frac{c}{|\eta|^{2H'_0+1} |\etati|^{2H'_0+1}} \int_{0}^{1} d\xi\int_{0}^{1} d\xiti\, \bigg|\int_0^\infty dt \, \vp(t)\int_0^t du \,  e^{\imath u(|\eta|-|\etati|)}\cf_x(\cg^n_{t-u})(\eta-\etati)\int_u^{2u} ds \, e^{\imath  s\xi} \int_u^{2u} dr \, e^{-\imath r\xiti}\bigg|^2.\label{i-m-n-2}
\end{align}
By injecting \eqref{i-m-n-1} and \eqref{i-m-n-2} into \eqref{defi:i-n-m}, we obtain
\begin{align*}
&\cj^n_M\geq c\int_{(1,\infty)^d\cap \cac_d} \bigg(\prod_{i=1,\ldots,d}\frac{d\eta_i}{|\eta_i|^{2H'_i-1}}\bigg)\int_{\eta_1}^{2\eta_1}\frac{d\etati_1}{|\etati_1|^{2H'_1-1}}\cdots\int_{\eta_d}^{2\eta_d}\frac{d\etati_d}{|\etati_d|^{2H'_d-1}} \big|\cf_x(\psi)(\eta-\etati)\big|^2 \1_{\{|\eta|\leq n\}}\1_{\{|\etati|\leq n\}}\\
&\hspace{1.5cm}\frac{1}{|\eta|^{2H'_0+1} |\etati|^{2H'_0+1}} \int_{0}^{1} d\xi\int_{0}^{1} d\xiti\, \bigg|\int_0^\infty dt \, \vp(t)\int_0^t du \, e^{\imath u(|\eta|-|\etati|)}\cf_x(\cg^n_{t-u})(\eta-\etati)\int_u^{2u} ds \, e^{\imath  s\xi} \int_u^{2u} dr \, e^{-\imath r\xiti}\bigg|^2\,
\end{align*}
Let us now proceed with the change of variable $\be_i=\frac{\etati_i}{\eta_i}$, $i=1,\ldots,d$. Using the convention introduced in Notation \ref{nota:prod-vec}, one has clearly $|\be *_. \eta|\leq 2|\eta|$ and $\1_{\{|\beta*_. \eta|\leq n\}}\geq \1_{\{|\eta|\leq \frac{n}{2}\}}$ for every $\beta\in [1,2]^d$, which gives
\begin{align*}
&\cj^n_M\geq c \int_{(1,\infty)^d\cap \cac_d} \bigg(\prod_{i=1,\ldots,d}\frac{d\eta_i}{|\eta_i|^{4H'_i-3}}\bigg)\frac{1}{|\eta|^{4H'_0+2}}\1_{\{|\eta|\leq \frac{n}{2}\}}\int_{[1,2]^d}d\beta \,\big|\cf_x(\psi)(\eta*_.(1-\be))\big|^2\\
&\hspace{1.5cm}\int_{0}^{1} d\xi\int_{0}^{1} d\xiti\, \bigg|\int_0^\infty dt \, \vp(t)\int_0^t du \, e^{\imath u(|\eta|-|\eta *_. \beta|)}\cf_x(\cg^n_{t-u})(\eta*_.(1-\be))\int_u^{2u} ds \, e^{\imath  s\xi} \int_u^{2u} dr \, e^{-\imath r\xiti}\bigg|^2\, ,
\end{align*}
and then, since $\cf_x(\psi)(\eta)=\cf_x(\psi)(-\eta)$,
\begin{align}
&\cj^n_M\nonumber\\
&\geq c \int_{(1,\infty)^d\cap \cac_d} \bigg(\prod_{i=1,\ldots,d}\frac{d\eta_i}{|\eta_i|^{4H'_i-3}}\bigg)\frac{1}{|\eta|^{4H'_0+2}}\1_{\{|\eta|\leq \frac{n}{2}\}}\int_{[0,1]^d}d\beta\, \big|\cf_x(\psi)(\eta*_. \be)\big|^2\nonumber\\
&\hspace{1cm} \int_{0}^{1} d\xi\int_{0}^{1} d\xiti\, \bigg|\int_0^\infty dt \, \vp(t)\int_0^t du \, e^{\imath u(|\eta|-|\eta*_. (\beta-1)|)}\cf_x(\cg^n_{t-u})(\eta*_.\beta)\int_u^{2u} ds \, e^{\imath  s\xi} \int_u^{2u} dr \, e^{-\imath r\xiti}\bigg|^2\nonumber\\
&\geq c \int_{(1,\infty)^d\cap \cac_d} \bigg(\prod_{i=1,\ldots,d}\frac{d\eta_i}{|\eta_i|^{4H'_i-2}}\bigg)\frac{1}{|\eta|^{4H'_0+2}}\1_{\{|\eta|\leq \frac{n}{2}\}}\int_{0}^{\eta_1}d\theta_1\cdots\int_{0}^{\eta_d}d\theta_d\, \big|\cf_x(\psi)(\theta_1,\ldots,\theta_d)\big|^2\nonumber\\
&\hspace{1cm} \int_{0}^{1} d\xi\int_{0}^{1} d\xiti\, \bigg|\int_0^\infty dt \, \vp(t)\int_0^t du \, e^{\imath u(|\eta|-|\theta-\eta|)}\cf_x(\cg^n_{t-u})(\theta_1,\ldots,\theta_d)\int_u^{2u} ds \, e^{\imath  s\xi} \int_u^{2u} dr \, e^{-\imath r\xiti}\bigg|^2\nonumber\\
&\geq c \int_{(1,\infty)^d\cap \cac_d} \bigg(\prod_{i=1,\ldots,d}\frac{d\eta_i}{|\eta_i|^{4H'_i-2}}\bigg)\frac{1}{|\eta|^{4H'_0+2}}\1_{\{|\eta|\leq \frac{n}{2}\}}\nonumber\\
&\bigg[\int_{|\theta|\leq 1}d\theta\, \big|\cf_x(\psi)(\theta)\big|^2 \int_{0}^{1} d\xi\int_{0}^{1} d\xiti\, \bigg|\int_0^\infty dt \, \vp(t)\int_0^t du \, e^{\imath u(|\eta|-|\eta-\theta|)}\cf_x(\cg^n_{t-u})(\theta)\int_u^{2u} ds \, e^{\imath  s\xi} \int_u^{2u} dr \, e^{-\imath r\xiti}\bigg|^2\bigg]\, .\label{low-bound-i-m}
\end{align}
Let us recall the assumptions contained in Definition \ref{defi-space-ce}, namely $\vp\geq 0$, $\text{supp}\ \vp\subset (0,1)$, as well as
\begin{equation*}
c_\psi:=\inf_{|\theta|\leq 1}|\cf_x(\psi)(\theta)|>0\quad \text{and}\quad c_\vp:=\int_{\frac34}^{1} dt \, \vp(t) >0\, .
\end{equation*}
For every $\eta\in \R^d$, we get, using the above constants $c_\psi,c_\vp$,
\begin{align*}
&\int_{|\theta|\leq 1}d\theta\, \big|\cf_x(\psi)(\theta)\big|^2 \int_{0}^{1} d\xi\int_{0}^{1} d\xiti\, \bigg|\int_0^\infty dt \, \vp(t)\int_0^t du \, e^{\imath u(|\eta|-|\eta-\theta|)}\cf_x(\cg^n_{t-u})(\theta)\int_u^{2u} ds \, e^{\imath  s\xi} \int_u^{2u} dr \, e^{-\imath r\xiti}\bigg|^2\\
&\geq c_\psi^2\int_{|\theta|\leq 1}d\theta\, \int_{0}^{1} d\xi\int_{0}^{1} d\xiti\, \bigg|\int_0^1 dt \, \vp(t)\int_0^t du \, e^{\imath u(|\eta|-|\eta-\theta|)}\sin((t-u)|\theta|)\int_u^{2u} ds \, e^{\imath  s\xi} \int_u^{2u} dr \, e^{-\imath r\xiti}\bigg|^2\\
&\geq c_\psi^2\int_{\frac14\leq|\theta|\leq \frac12}d\theta\int_{0}^{\frac14} d\xi\int_{0}^{\frac14} d\xiti\, \bigg|\int_0^1 dt \, \vp(t)\int_0^t du\, \sin\big((t-u)|\theta|\big)\int_u^{2u} ds \int_u^{2u} dr \, \cos\big(u(|\eta|-|\eta-\theta|)+s\xi-r\xiti\big)\bigg|^2\\
&\geq c_\psi^2\frac{\cos^2(1)}{4^2}\int_{\frac14\leq |\theta|\leq \frac12}d\theta\bigg[\int_{\frac34}^1 dt \, \vp(t)\int_{\frac14}^{\frac12} du\, \sin\big((t-u)|\theta|\big)u^2 \bigg]^2\\
&\geq c_\psi^2 c_{\vp}^2 \frac{\cos^2(1)}{4^8}\sin^2\big(\frac{1}{16}\big)\int_{\frac14\leq |\theta|\leq \frac12}d\theta \ > \ 0\, .
\end{align*}

\

Going back to \eqref{low-bound-i-m}, we have thus shown the existence of a constant $c_0>0$ such that
\begin{align*}
\cj^n_M&\geq c_0 \int_{(1,\infty)^d\cap \cac_d} \bigg(\prod_{i=1,\ldots,d}\frac{d\eta_i}{|\eta_i|^{4H'_i-2}}\bigg)\frac{1}{|\eta|^{4H'_0+2}}\1_{\{|\eta|\leq \frac{n}{2}\}}.
\end{align*}
By recalling the definition \eqref{defi:ens-c-d} of $\cac_d$, we can use a spherical change of coordinates and obtain that for some constant $c_1>0$,
\begin{align*}
\cj^n_M&\geq c_1 \int_2^{\frac{n}{2}}\, \frac{dr}{r^{4(H'_0+H'_+)-3d+3}}\, .
\end{align*}
Following \eqref{h-prime}, one has precisely $4(H'_0+H'_+)-3d+3=1$, which entails the desired conclusion:
\begin{equation}\label{div-cj-m}
\lim_{n\to\infty}\cj^n_M = \infty \, .
\end{equation}

\subsection{Analysis of $\cj^n_R$}

Observe again that $\cf \rho_n(\xi,\eta)=\cf\rho(2^{-n} \xi,2^{-n}\eta)$, and so, just as in Section \ref{sec:item-i}, one can write $\|\cf \rho_n\|_{L^\infty}\leq \|\cf \rho\|_{L^\infty}\leq \|\rho\|_{L^1}=1$, which gives
\begin{align}
&\int_{|\eta|}^{2|\eta|} \frac{d\xi}{|\xi|^{2H'_0-1}}\int_{|\etati|}^{2|\etati|} \frac{d\xiti}{|\xiti|^{2H'_0-1}}\, \big|\cf \rho_n(-\xi,-\eta)\big|^2\big|\cf \rho_n(\xiti,\etati)\big|^2\big|R^n_\vp((\xi,\eta),(\xiti,\etati))\big|^2\nonumber\\
&\leq \int_{|\eta|}^{2|\eta|} \frac{d\xi}{|\xi|^{2H'_0-1}}\int_{|\etati|}^{2|\etati|} \frac{d\xiti}{|\xiti|^{2H'_0-1}}\, \big|R^n_\vp((\xi,\eta),(\xiti,\etati))\big|^2\, .\label{estim-i-n-r-1}
\end{align}
Using now the uniform estimate \eqref{bound-r-n}, we can assert that for all $\eta,\etati\in \R^d$ with $|\eta|\geq 1$ and $|\etati|\geq 1$, and for every $\ka\in [0,\frac12)$,
\begin{align}
&\int_{|\eta|}^{2|\eta|} \frac{d\xi}{|\xi|^{2H'_0-1}}\int_{|\etati|}^{2|\etati|} \frac{d\xiti}{|\xiti|^{2H'_0-1}}\big|R^n_\vp((\xi,\eta),(\xiti,\etati))\big|^2\nonumber\\
&=\frac{c}{|\eta|^{2H'_0-2} |\etati|^{2H'_0-2}} \int_{1}^{2} \frac{d\xi}{|\xi|^{2H'_0-1}}\int_{1}^{2} \frac{d\xiti}{|\xiti|^{2H'_0-1}}\big|R^n_\vp((\xi|\eta|,\eta),(\xiti|\etati|,\etati))\big|^2\nonumber\\
&\lesssim \frac{1}{|\eta|^{2H'_0} |\etati|^{2H'_0}} \int_{1}^{2} d\xi\int_{1}^{2} d\xiti\, \bigg|\frac{1}{|\eta|^\ka|\xi-1|^\ka |\etati||\xiti+1|}+\frac{1}{|\eta||\xi+1| |\etati||\xiti+1|}+\frac{1}{|\eta||\xi+1| |\etati|^\ka |\xiti-1|^\ka}\bigg|^2\nonumber\\
&\lesssim \frac{1}{|\eta|^{2H'_0} |\etati|^{2H'_0}} \int_{1}^{2} d\xi\int_{1}^{2} d\xiti\, \bigg[\frac{1}{|\eta|^{2\ka}|\xi-1|^{2\ka} |\etati|^2}+\frac{1}{|\eta|^2 |\etati|^{2}}+\frac{1}{|\eta|^2 |\etati|^{2\ka} |\xiti-1|^{2\ka}}\bigg]\nonumber\\
&\lesssim  \bigg[\frac{1}{|\eta|^{2H'_0+2\ka} |\etati|^{2H'_0+2}}+\frac{1}{|\eta|^{2H'_0+2} |\etati|^{2H'_0+2\ka} }\bigg]\, .\label{estim-i-n-r-2}
\end{align}
Injecting successively \eqref{estim-i-n-r-1} and \eqref{estim-i-n-r-2} into the expression \eqref{defi:i-n-r} of $\cj_R^n$, we obtain that for every $\ka\in [0,\frac12)$,
\begin{align}
\big|\cj^n_R\big|&\lesssim\int_{(1,\infty)^d\cap \cac_d} \bigg(\prod_{i=1,\ldots,d}\frac{d\eta_i}{|\eta_i|^{2H'_i-1}}\bigg)\int_{\eta_1}^{2\eta_1}\frac{d\etati_1}{|\etati_1|^{2H'_1-1}}\cdots\int_{\eta_d}^{2\eta_d}\frac{d\etati_d}{|\etati_d|^{2H'_d-1}} \big|\cf_x(\psi)(\eta-\etati)\big|^2\nonumber\\
&\hspace{7cm}\bigg[\frac{1}{|\eta|^{2H'_0+2\ka} |\etati|^{2H'_0+2}}+\frac{1}{|\eta|^{2H'_0+2} |\etati|^{2H'_0+2\ka} }\bigg],\label{transi-1}
\end{align}
and with Notation \eqref{nota:prod-vec}, this yields
\begin{align}
\big|\cj^n_R\big|
&\lesssim \int_{(1,\infty)^d\cap \cac_d} \bigg(\prod_{i=1,\ldots,d}\frac{d\eta_i}{|\eta_i|^{4H'_i-3}}\bigg)\frac{1}{|\eta|^{4H'_0+2+2\ka}}\int_{[1,2]^d}d\beta\,  \big|\cf_x(\psi)(\eta*_.(1-\be))\big|^2\nonumber\\
&\lesssim \int_{(1,\infty)^d\cap \cac_d} \bigg(\prod_{i=1,\ldots,d}\frac{d\eta_i}{|\eta_i|^{4H'_i-3}}\bigg)\frac{1}{|\eta|^{4H'_0+2+2\ka}}\int_{[0,1]^d}d\beta\,  \big|\cf_x(\psi)(\eta*_.\be)\big|^2\nonumber\\
&\lesssim \int_{(1,\infty)^d\cap \cac_d} \bigg(\prod_{i=1,\ldots,d}\frac{d\eta_i}{|\eta_i|^{4H'_i-2}}\bigg)\frac{1}{|\eta|^{4H'_0+2+2\ka}}\int_{0}^{\infty}d\theta_1\cdots\int_{0}^{\infty}d\theta_d\,  \big|\cf_x(\psi)(\theta)\big|^2\nonumber\\
&\lesssim \int_{(1,\infty)^d\cap \cac_d} \bigg(\prod_{i=1,\ldots,d}\frac{d\eta_i}{|\eta_i|^{4H'_i-2}}\bigg)\frac{1}{|\eta|^{4H'_0+2+2\ka}}.\label{transi-2}
\end{align}
Keeping in mind the definition \eqref{defi:ens-c-d} of $\cac_d$, we can then perform a spherical change of coordinates and get
\begin{align*}
\big|\cj^n_R\big|
&\lesssim \int_1^\infty dr \, \frac{r^{d-1}}{r^{4(H'_0+H'_+)-2d+2+2\ka}}\lesssim \int_1^\infty\, \frac{dr}{r^{1+2\ka}}\, ,
\end{align*}
where we have used the relation $4(H'_0+H'_+)-3d+3=1$. Picking $\ka\in (0,\frac12)$, this finally proves that
\begin{equation}\label{bound-unif-r-n}
\sup_{n\geq 1} \big|\cj^n_R\big| < \infty\, .
\end{equation}

\subsection{Analysis of $\cj^n_{M,R}$}

As above, we can use the fact that $\|\cf \rho_n\|_{L^\infty}\leq 1$ to obtain
\begin{align}
&\big|\cj^n_{M,R}\big|\nonumber\\
&\lesssim\int_{(1,\infty)^d\cap \cac_d} \bigg(\prod_{i=1,\ldots,d}\frac{d\eta_i}{|\eta_i|^{2H'_i-1}}\bigg)\int_{\eta_1}^{2\eta_1}\frac{d\etati_1}{|\etati_1|^{2H'_1-1}}\cdots\int_{\eta_d}^{2\eta_d}\frac{d\etati_d}{|\etati_d|^{2H'_d-1}} \big|\cf_x(\psi)(\eta-\etati)\big|^2\nonumber\\
&\hspace{5cm}\int_{|\eta|}^{2|\eta|} \frac{d\xi}{|\xi|^{2H'_0-1}}\int_{|\etati|}^{2|\etati|} \frac{d\xiti}{|\xiti|^{2H'_0-1}}\big|M^n_\vp((\xi,\eta),(\xiti,\etati))\big| \big|R^n_\vp((\xi,\eta),(\xiti,\etati))\big|\nonumber\\
&\lesssim\int_{(1,\infty)^d\cap \cac_d} \bigg(\prod_{i=1,\ldots,d}\frac{d\eta_i}{|\eta_i|^{2H'_i-1}}\bigg)\int_{\eta_1}^{2\eta_1}\frac{d\etati_1}{|\etati_1|^{2H'_1-1}}\cdots\int_{\eta_d}^{2\eta_d}\frac{d\etati_d}{|\etati_d|^{2H'_d-1}} \big|\cf_x(\psi)(\eta-\etati)\big|^2\nonumber\\
&\bigg(\int_{|\eta|}^{2|\eta|} \frac{d\xi}{|\xi|^{2H'_0-1}}\int_{|\etati|}^{2|\etati|} \frac{d\xiti}{|\xiti|^{2H'_0-1}}\big|M^n_\vp((\xi,\eta),(\xiti,\etati))\big|^2 \bigg)^{\frac12} \bigg(\int_{|\eta|}^{2|\eta|} \frac{d\xi}{|\xi|^{2H'_0-1}}\int_{|\etati|}^{2|\etati|} \frac{d\xiti}{|\xiti|^{2H'_0-1}} \big|R^n_\vp((\xi,\eta),(\xiti,\etati))\big|^2\bigg)^{\frac12},\label{estim-i-n-r-m-1}
\end{align}
where the second estimate is naturally derived from Cauchy-Schwarz inequality. As far as the integral of $M^n_\vp$ is concerned, let us write this time 
\begin{align*}
&\int_{|\eta|}^{2|\eta|} \frac{d\xi}{|\xi|^{2H'_0-1}}\int_{|\etati|}^{2|\etati|} \frac{d\xiti}{|\xiti|^{2H'_0-1}}\big|M^n_\vp((\xi,\eta),(\xiti,\etati))\big|^2\\
&=\frac{1}{|\eta|^{2H'_0-2} |\etati|^{2H'_0-2}} \int_{1}^{2} \frac{d\xi}{|\xi|^{2H'_0-1}}\int_{1}^{2} \frac{d\xiti}{|\xiti|^{2H'_0-1}}\big|M^n_\vp((\xi|\eta|,\eta),(\xiti|\etati|,\etati))\big|^2\\
&\lesssim \frac{1}{|\eta|^{2H'_0} |\etati|^{2H'_0}} \int_{1}^{2} d\xi\int_{1}^{2} d\xiti\, \bigg[\int_0^\infty dt \, \vp(t)\int_0^t du \, \big|\cf_x(\cg^n_{t-u})(\eta-\etati)\big|\bigg|\int_0^u ds \, e^{\imath  s|\eta|(\xi-1)} \int_0^u dr \, e^{-\imath r|\etati|(\xiti-1)}\bigg|\bigg]^2\\
&\lesssim \frac{1}{|\eta|^{2H'_0} |\etati|^{2H'_0}} \int_{0}^{1} d\xi\int_{0}^{1} d\xiti\, \bigg[\int_0^\infty dt \, \vp(t)\int_0^t du \, \bigg|\int_0^u ds \, e^{\imath  s|\eta|\xi} \int_0^u dr \, e^{-\imath r|\etati|\xiti}\bigg|\bigg]^2\\
&\lesssim\frac{1}{|\eta|^{2H'_0+1} |\etati|^{2H'_0+1}} \int_{0}^{|\eta|} d\xi\int_{0}^{|\etati|} d\xiti\, \bigg[\int_0^\infty dt \, \vp(t)\int_0^t du \, \bigg|\int_0^u ds \, e^{\imath  s\xi}\bigg|\bigg| \int_0^u dr \, e^{-\imath r\xiti}\bigg|\bigg]^2,
\end{align*}
and since $\text{supp} \, \vp\subset (0,1)$, this entails
\begin{align}
&\int_{|\eta|}^{2|\eta|} \frac{d\xi}{|\xi|^{2H'_0-1}}\int_{|\etati|}^{2|\etati|} \frac{d\xiti}{|\xiti|^{2H'_0-1}}\big|M^n_\vp((\xi,\eta),(\xiti,\etati))\big|^2\nonumber\\
&\lesssim\frac{\|\vp\|_\infty}{|\eta|^{2H'_0+1} |\etati|^{2H'_0+1}} \int_{0}^{\infty} d\xi\int_{0}^{\infty} d\xiti\, \bigg[\int_0^1 dt \, \int_0^t du \, \bigg|\int_0^u ds \, e^{\imath  s\xi}\bigg|\bigg| \int_0^u dr \, e^{-\imath r\xiti}\bigg|\bigg]^2\nonumber\\
&\lesssim\frac{\|\vp\|_\infty}{|\eta|^{2H'_0+1} |\etati|^{2H'_0+1}} \bigg(\int_{0}^\infty \frac{d\xi}{1+|\xi|^2}\bigg)^2\lesssim\frac{1}{|\eta|^{2H'_0+1}|\etati|^{2H'_0+1}}\, .\label{estim-i-n-r-m-2}
\end{align}
On the other hand, we have shown in \eqref{estim-i-n-r-2} that for every $\ka\in [0,\frac12)$,
\begin{align}
&\int_{|\eta|}^{2|\eta|} \frac{d\xi}{|\xi|^{2H'_0-1}}\int_{|\etati|}^{2|\etati|} \frac{d\xiti}{|\xiti|^{2H'_0-1}}\big|R^n_\vp((\xi,\eta),(\xiti,\etati))\big|^2\lesssim  \bigg[\frac{1}{|\eta|^{2H'_0+2\ka} |\etati|^{2H'_0+2}}+\frac{1}{|\eta|^{2H'_0+2} |\etati|^{2H'_0+2\ka} }\bigg]\, .\label{estim-i-n-r-m-3}
\end{align}
Injecting \eqref{estim-i-n-r-m-2} and \eqref{estim-i-n-r-m-3} into \eqref{estim-i-n-r-m-1}, we obtain that for every $\ka\in [0,\frac12)$, 
\begin{align*}
\big|\cj^n_{M,R}\big|&\lesssim\int_{(1,\infty)^d \cap \cac_d} \bigg(\prod_{i=1,\ldots,d}\frac{d\eta_i}{|\eta_i|^{2H'_i-1}}\bigg)\int_{\eta_1}^{2\eta_1}\frac{d\etati_1}{|\etati_1|^{2H'_1-1}}\cdots\int_{\eta_d}^{2\eta_d}\frac{d\etati_d}{|\etati_d|^{2H'_d-1}} \big|\cf_x(\psi)(\eta-\etati)\big|^2\\
&\hspace{3cm}\bigg(\frac{1}{|\eta|^{2H'_0+1} |\etati|^{2H'_0+1}}\bigg)^{\frac12} \bigg(\frac{1}{|\eta|^{2H'_0+2\ka} |\etati|^{2H'_0+2}}+\frac{1}{|\eta|^{2H'_0+2} |\etati|^{2H'_0+2\ka} }\bigg)^{\frac12}\\
&\lesssim\int_{(1,\infty)^d \cap \cac_d} \bigg(\prod_{i=1,\ldots,d}\frac{d\eta_i}{|\eta_i|^{2H'_i-1}}\bigg)\int_{\eta_1}^{2\eta_1}\frac{d\etati_1}{|\etati_1|^{2H'_1-1}}\cdots\int_{\eta_d}^{2\eta_d}\frac{d\etati_d}{|\etati_d|^{2H'_d-1}} \big|\cf_x(\psi)(\eta-\etati)\big|^2\\
&\hspace{6cm}\bigg[\frac{1}{|\eta|^{2H'_0+\frac12+\ka} |\etati|^{2H'_0+\frac32}}+\frac{1}{|\eta|^{2H'_0+\frac32} |\etati|^{2H'_0+\frac12+\ka} }\bigg]\\
&\lesssim \int_{(1,\infty)^d \cap \cac_d} \bigg(\prod_{i=1,\ldots,d}\frac{d\eta_i}{|\eta_i|^{4H'_i-2}}\bigg)\frac{1}{|\eta|^{4H'_0+2+\ka}},
\end{align*}
where, to derive the last estimate, we have used the same successive arguments as in the transition from \eqref{transi-1} to \eqref{transi-2}. Performing the same change of variables as in the previous situations, and also using the assumption $4(H'_0+H'_+)-3d+3=1$, we deduce that for every $\ka\in [0,\frac12)$,
\begin{align*}
\big|\cj^n_{M,R}\big|
&\lesssim \int_1^\infty dr \, \frac{r^{d-1}}{r^{4(H'_0+H'_+)-2d+2+\ka}}\lesssim \int_1^\infty\, \frac{dr}{r^{1+\ka}}\, .
\end{align*}
By picking $\ka>0$, this shows that
\begin{equation}\label{bound-unif-m-r-n}
\sup_{n\geq 1} \big|\cj^n_{M,R}\big| < \infty\, .
\end{equation}

\

\color{black}

\subsection{Conclusion}

By gathering \eqref{fir-low-bo}, \eqref{second-bo} and \eqref{decompo-cj-n}, we get first that
$$\mathbb{E}\Big[ \big|\langle \<IPsi2>^n,\Phi\rangle \big|^2\Big]\geq 2\big\{\cj_M^n+\cj_R^n+\cj_{M,R}^n\big\}\, .$$ 
Then, by \eqref{div-cj-m}, \eqref{bound-unif-r-n} and \eqref{bound-unif-m-r-n}, we know that $\lim_{n\to\infty}\cj^n_M = \infty$ and $\sup_{n\geq 1} \big|\cj_R^n+\cj_{M,R}^n\big| < \infty$, which yields the desired conclusion, namely: for every $\Phi\in \ce$ (where $\ce$ is the set introduced in Definition \ref{defi-space-ce}), it holds that
$$\lim_{n\to\infty} \mathbb{E}\Big[ \big|\langle \<IPsi2>^n,\Phi\rangle \big|^2\Big] =\infty\, .$$

\bigskip


\appendix

\section{Explosion of the fractional L{\'e}vy area}\label{append:levy-area}

We propose here to review the arguments behind Proposition \ref{prop:intro-fbm}, as a possible comparison with the estimates in the wave setting.

Let us recall first that a two-dimensional fractional noise $\dot{B}=(\dot{B}^{(1)},\dot{B}^{(2)})$ on $\R$, with index $H\in (0,1)$, is characterized by the covariance formula: for $1\leq i,j\leq 2$, and for all smooth compactly-supported functions $\psi,\vp:\R\to \R$,  
\begin{equation*}
\mathbb{E}\big[ \langle \dot{B}^{(i)},\phi\rangle \, \langle \dot{B}^{(j)},\vp\rangle \big]=\1_{\{i=j\}}\int_{\R} \frac{d\xi}{|\xi|^{2H-1}}\,  \cf \psi(\xi)  \overline{\cf \vp(\xi)} . 
\end{equation*}
Based on this expression, and following the proof of Lemma \ref{lem:cova}, it is then easy to check that the covariance formula for the mollified noise introduced in \eqref{mollifi-noi} reads as follows: for all $n\geq 1$, $1\leq i,j\leq 2$ and $s,t\in \R$,
\begin{equation}\label{cova-deriv-fbm}
\mathbb{E}\big[ \dot{B}^{(i),n}_s \dot{B}^{(j),n}_t \big]= \1_{\{i=j\}}\int_{\R} \frac{d\xi}{|\xi|^{2H-1}}\, |\cf \rho(-2^{-n}\xi)|^2 e^{\imath \xi (s-t)}\, .
\end{equation}
where $\cf \rho (\xi):=\int_{\R} dx \, e^{-\imath \xi x}\rho(x)$. We are now in a position to prove the statement.

\begin{proof}[Proof of Proposition \ref{prop:intro-fbm}]
As we mentionned it in the introduction, part $(i)$ of the proposition is actually a well-known result of rough paths theory (see for instance \cite[Theorem 15.33]{friz-victoir}).

\smallskip

Let us therefore concentrate on the proof of the divergence property, that is Part $(ii)$ of the proposition. To this end, we fix $H\in (0,\frac14]$, as well as a non-zero smooth function $\vp:\R_+\to \R_+$ with support in $[0,1]$. Based on the covariance formula \eqref{cova-deriv-fbm}, we can first expand the expectation under consideration as 
\begin{align*}
&\mathbb{E}\bigg[\Big|\big\langle t\mapsto \int_0^t ds\int_0^s dr\,  \dot{B}^{(1),n}_r \, \dot{B}^{(2),n}_s,\vp\big\rangle \Big|^2\bigg] \\
&=\mathbb{E}\bigg[\bigg|\int_0^\infty dt \, \vp(t)\int_0^t ds\int_0^s dr\,  \dot{B}^{(1),n}_r \, \dot{B}^{(2),n}_s \bigg|^2\bigg]\\
&=\int_{(0,\infty)^2} dtd\tti \, \vp(t)\vp(\tti)\int_0^t ds\int_0^s dr \int_0^{\tti} d\sti\int_0^{\sti}d\rti\, \mathbb{E}\Big[ \dot{B}^{(1),n}_r \dot{B}^{(2),n}_s \overline{\dot{B}^{(1),n}_{\rti}} \overline{\dot{B}^{(2),n}_{\sti}}\Big]\\
&=\int_{(0,\infty)^2} dtd\tti \, \vp(t)\vp(\tti)\int_0^t ds \int_0^{\tti} d\sti\, \mathbb{E}\Big[  \dot{B}^{(2),n}_s  \overline{\dot{B}^{(2),n}_{\sti}}\Big] \int_0^s dr \int_0^{\sti}d\rti\, \mathbb{E}\Big[ \dot{B}^{(1),n}_r \overline{\dot{B}^{(1),n}_{\rti}} \Big]\\
&=\int_{\R} \frac{d\xi}{|\xi|^{2H-1}}\, |\cf \rho(-2^{-n}\xi)|^2  \int_{\R} \frac{d\xiti}{|\xiti|^{2H-1}}\, |\cf \rho(-2^{-n}\xiti)|^2\\
&\hspace{4cm}  \int_{(0,\infty)^2} dtd\tti \, \vp(t)\vp(\tti)\int_0^t ds\int_0^{\tti} d\sti\, e^{\imath \xi (s-\sti)}\int_0^s dr \int_0^{\sti}d\rti\, e^{\imath \xiti (r-\rti)}\\
&= \int_{\R} \frac{d\xi}{|\xi|^{2H-1}}\, |\cf \rho(-2^{-n}\xi)|^2  \int_{\R} \frac{d\xiti}{|\xiti|^{2H+1}}\, |\cf \rho(-2^{-n}\xiti)|^2\\
&\hspace{4cm}  \int_{(0,\infty)^2} dtd\tti \, \vp(t)\vp(\tti)\int_0^t ds \int_0^{\tti} d\sti\, e^{\imath \xi (s-\sti)}\big\{e^{\imath \xiti s}-1\big\}\big\{e^{-\imath \xiti \sti}-1\big\}\\
&=\int_{\R} \frac{d\xi}{|\xi|^{2H-1}}\, |\cf \rho(-2^{-n}\xi)|^2  \int_{\R} \frac{d\xiti}{|\xiti|^{2H+1}}\, |\cf \rho(-2^{-n}\xiti)|^2\bigg|\int_0^\infty dt \, \vp(t)\int_0^t ds \, e^{\imath \xi s}\big\{e^{\imath \xiti s}-1\big\}\bigg|^2\\
&=\int_{\R} \frac{d\xi}{|\xi|^{2H-1}}\, |\cf \rho(-2^{-n}\xi)|^2  \int_{\R} \frac{d\xiti}{|\xiti|^{2H+1}}\, |\cf \rho(-2^{-n}\xiti)|^2 \big|M_\vp(\xi,\xiti)+R_\vp(\xi)\big|^2 \, ,
\end{align*}
where we have set
$$M_\vp(\xi,\xiti):=\int_0^\infty dt \, \vp(t)\int_0^t ds \, e^{\imath s(\xi+\xiti)} \quad , \quad R_\vp(\xi):=-\int_0^\infty dt \, \vp(t)\int_0^t ds \, e^{\imath \xi s}\, .$$
As a result, it holds that
\begin{equation}
\mathbb{E}\bigg[\Big|\big\langle t\mapsto \int_0^t ds\int_0^s dr\,  \dot{B}^{(1),n}_r \, \dot{B}^{(2),n}_s,\vp\big\rangle \Big|^2\bigg] \geq  \cj_n,
\end{equation}
where 
$$\cj_n:=\int_1^\infty \frac{d\xi}{|\xi|^{2H-1}}\, |\cf \rho(-2^{-n}\xi)|^2\int_{|\xi|}^{2|\xi|} \frac{d\xiti}{|\xiti|^{2H+1}}|\cf \rho(2^{-n}\xiti)|^2 \big|M_\vp(\xi,-\xiti)+R_\vp(\xi)\big|^2. $$
The integral $\cj_n$ can be further decomposed as $\cj_n=\cj_M^{n}+\cj_{M,R}^{n}$, with 
$$\cj_M^{n}:=\int_1^\infty \frac{d\xi}{|\xi|^{2H-1}}\, |\cf \rho(-2^{-n}\xi)|^2\int_{|\xi|}^{2|\xi|} \frac{d\xiti}{|\xiti|^{2H+1}}|\cf \rho(2^{-n}\xiti)|^2 \big|M_\vp(\xi,-\xiti)\big|^2$$
and
\begin{align}
&\cj_{M,R}^n:=\nonumber\\
&\int_1^\infty \frac{d\xi}{|\xi|^{2H-1}}\, |\cf \rho(-2^{-n}\xi)|^2\int_{|\xi|}^{2|\xi|} \frac{d\xiti}{|\xiti|^{2H+1}}|\cf \rho(2^{-n}\xiti)|^2 \big\{M_\vp(\xi,-\xiti)\overline{R_\vp(\xi)}+\overline{M_\vp(\xi,-\xiti)}R_\vp(\xi)+\big|R_\vp(\xi)\big|^2\big\}\, .\label{cj-m-r}
\end{align}

\

\noindent
\textit{Treatment of $\cj_{M,R}^n$.} Since $\|\cf \rho\|_\infty \leq \|\rho\|_{L^1(\R)}=1$, one has, uniformly over $n$,
\begin{align*}
\big|\cj_{M,R}^n\big|&\lesssim \int_1^\infty \frac{d\xi}{|\xi|^{2H-1}}\, \big|R_\vp(\xi)\big|\int_{|\xi|}^{2|\xi|} \frac{d\xiti}{|\xiti|^{2H+1}} \big|M_\vp(\xi,-\xiti)\big|+\int_1^\infty \frac{d\xi}{|\xi|^{2H-1}}\, \big|R_\vp(\xi)\big|^2\int_{|\xi|}^{2|\xi|} \frac{d\xiti}{|\xiti|^{2H+1}}\\
&\lesssim \int_1^\infty \frac{d\xi}{|\xi|^{4H-1}}\, \big|R_\vp(\xi)\big|\int_{1}^{2} \frac{d\beta}{|\beta|^{2H+1}} \big|M_\vp(\xi,-\beta |\xi|)\big|+\int_1^\infty \frac{d\xi}{|\xi|^{4H-1}}\, \big|R_\vp(\xi)\big|^2\int_{1}^{2} \frac{d\beta}{|\beta|^{2H+1}}.
\end{align*}
Now, it is clear that $\big|R_\vp(\xi)\big| \lesssim \frac{1}{|\xi|}$ and $\big|M_\vp(\xi,-\beta |\xi|)\big| \lesssim \frac{1}{|\xi|^{1-\varepsilon} |\be-1|^{1-\varepsilon}}$ for every $\varepsilon\in (0,1)$, which yields
\begin{align*}
\big|\cj_{M,R}^n\big|&\lesssim \int_1^\infty \frac{d\xi}{|\xi|^{4H+1-\varepsilon}}\, \int_{1}^{2} \frac{d\beta}{|\beta-1|^{1-\varepsilon}} +\int_1^\infty \frac{d\xi}{|\xi|^{4H+1}}.
\end{align*}
By picking $\varepsilon \in (0,4H)$, this shows that
$$\sup_{n\geq 0} \big|\cj_n^{M,R}\big| \ < \ \infty.$$

\

\noindent
\textit{Treatment of $\cj_{M}^n$.} Let us write, for every $\xi>1$,
\begin{align*}
\int_{|\xi|}^{2|\xi|} \frac{d\xiti}{|\xiti|^{2H+1}}\big|\cf \rho(2^{-n}\xiti)\big|^2 \big|M_\vp(\xi,-\xiti)\big|^2&= \frac{1}{|\xi|^{2H}}\int_1^2 d\beta \, \big|\cf \rho(2^{-n}\beta\xi)\big|^2\big|M_\vp(\xi,-\beta \xi)\big|^2\\
&= \frac{1}{|\xi|^{2H}}\int_1^2 d\beta \, \big|\cf \rho(2^{-n}\beta\xi)\big|^2\bigg|\int_0^\infty dt \, \vp(t)\int_0^t ds \, e^{-\imath s\xi(\beta-1)}\bigg|^2\\
&= \frac{1}{|\xi|^{2H}}\int_0^1 d\beta \, \big|\cf \rho(2^{-n}(\beta\xi+\xi))\big|^2\bigg|\int_0^\infty dt \, \vp(t)\int_0^t ds \, e^{-\imath s\xi\beta}\bigg|^2\\
&= \frac{1}{|\xi|^{2H+1}}\int_0^{\xi} d\theta \, \big|\cf \rho(2^{-n}(\theta+\xi))\big|^2 \bigg|\int_0^\infty dt \, \vp(t)\int_0^t ds \, e^{-\imath s\theta}\bigg|^2,
\end{align*}
and since we have assumed that $\text{supp}\, \vp\subset (0,1)$, we get
\begin{align*}
\int_{|\xi|}^{2|\xi|} \frac{d\xiti}{|\xiti|^{2H+1}}\big|\cf \rho(2^{-n}\xiti)\big|^2 \big|M_\vp(\xi,-\xiti)\big|^2
&\geq \frac{1}{|\xi|^{2H+1}}\int_0^{1} d\theta \, \big|\cf \rho(2^{-n}(\theta+\xi))\big|^2\bigg|\int_0^1 dt \, \vp(t)\int_0^t ds \, \cos( s\theta)\bigg|^2\\
&\geq \frac{1}{|\xi|^{2H+1}}\cos(1)^2\bigg[\int_0^1 dt \, \vp(t)t\bigg]^2\int_0^{1} d\theta \, \big|\cf \rho(2^{-n}(\theta+\xi))\big|^2\, .
\end{align*}
Injecting the latter bound into the expression of $\cj_n^{M}$, we obtain
\begin{align*}
\cj_n^{M} &\geq \cos(1)^2\bigg[\int_0^1 dt \, \vp(t)t\bigg]^2\int_1^\infty \frac{d\xi}{|\xi|^{4H}}\, |\cf \rho(-2^{-n}\xi)|^2\int_0^{1} d\theta \, \big|\cf \rho(2^{-n}(\theta+\xi))\big|^2\\
&\geq \cos(1)^2\bigg[\int_0^1 dt \, \vp(t)t\bigg]^2 \, 2^n\int_1^\infty \frac{d\xi}{|\xi|^{4H}}\, |\cf \rho(-2^{-n}\xi)|^2\int_{2^{-n}\xi}^{2^{-n}\xi+2^{-n}} d\theta \, \big|\cf \rho(\theta)\big|^2\\
&\geq \cos(1)^2\bigg[\int_0^1 dt \, \vp(t)t\bigg]^2 \, 2^{n(2-4H)}\int_{2^{-n}}^\infty \frac{d\xi}{|\xi|^{4H}}\, |\cf \rho(-\xi)|^2\int_{\xi}^{\xi+2^{-n}} d\theta \, \big|\cf \rho(\theta)\big|^2\, .
\end{align*}

Recall that $\cf \rho$ is assumed to be continuous and that $\cf \rho(0)=1$. Thus we can fix $\delta >0$ such that for every $|\xi|\leq \delta$, one has $\big|\cf \rho(\xi)\big|^2 \geq \frac12$. For every $n$ large enough so that $2^{-n}\leq \frac{\delta}{2}$, we get that
\begin{align*}
\cj_n^{M}&\geq \cos(1)^2\bigg[\int_0^1 dt \, \vp(t)t\bigg]^2\, 2^{n(2-4H)}\int_{2^{-n}}^{\frac{\delta}{2}} \frac{d\xi}{|\xi|^{4H}}\, |\cf \rho(-\xi)|^2\int_{\xi}^{\xi+2^{-n}} d\theta \, \big|\cf \rho(\theta)\big|^2\\
&\geq \frac{\cos(1)^2}{4}\bigg[\int_0^1 dt \, \vp(t)t\bigg]^2 \, 2^{n(1-4H)}\int_{2^{-n}}^{\frac{\delta}{2}} \frac{d\xi}{|\xi|^{4H}}\, .
\end{align*}
It is now clear that for every $H\leq \frac14$, the latter quantity tends to infinity as $n\to\infty$, and so 
\begin{equation*}
\cj_n^{M} \stackrel{n\to\infty}{\longrightarrow} \infty \, .
\end{equation*}

\

\noindent
\textit{Conclusion.} We have shown that 
$$\mathbb{E}\bigg[\Big|\big\langle t\mapsto \int_0^t ds\int_0^s dr\,  \dot{B}^{(1),n}_r \, \dot{B}^{(2),n}_s,\vp\big\rangle \Big|^2\bigg] \geq \cj_n^{M}+\cj_n^{M,R}, \quad \quad \text{with} \ \cj_n^{M} \stackrel{n\to\infty}{\longrightarrow} \infty \ \ \text{and} \ \ \sup_{n\geq 0} \big|\cj_n^{M,R}\big| < \infty,$$
which immediately entails the desired conclusion
$$\mathbb{E}\bigg[\Big|\big\langle t\mapsto \int_0^t ds\int_0^s dr\,  \dot{B}^{(1),n}_r \, \dot{B}^{(2),n}_s,\vp\big\rangle \Big|^2\bigg] \stackrel{n\to\infty}{\longrightarrow} \infty .$$
\end{proof}

\bigskip

\end{document}